\documentclass[11pt,a4paper,reqno]{amsart}
\usepackage[T1]{fontenc}
\usepackage{lmodern}
\usepackage[margin=29mm]{geometry}
\usepackage{amsmath,amssymb,amsthm,mathtools}
\usepackage{microtype}
\usepackage{xcolor}
\usepackage[colorlinks=true,linkcolor=blue!45!black,
 citecolor=blue!45!black,urlcolor=blue!45!black]{hyperref}
\numberwithin{equation}{section}
\newtheorem{theorem}{Theorem}[section]
\newtheorem{proposition}[theorem]{Proposition}
\newtheorem{lemma}[theorem]{Lemma}
\newtheorem{corollary}[theorem]{Corollary}
\theoremstyle{definition}
\newtheorem{definition}{Definition}[section]
\newtheorem*{remark}{Remark}
\newcommand{\Chat}{\widehat{\mathbb C}}
\DeclareMathOperator{\Fill}{Fill}
\DeclareMathOperator{\Post}{Post}
\DeclareMathOperator{\Comp}{Comp}
\DeclareMathOperator{\modu}{mod}

\title{Wandering exposed Julia components of rational maps}
\author{Jinsong Zeng}
\subjclass[2020]{Primary 37F10}
\keywords{rational maps, Fatou domains, puzzles, quasi-additivity law}
\date{}
\hypersetup{pdftitle={Wandering exposed Julia components of rational maps},
 pdfauthor={Jinsong Zeng}}
\begin{document}
\begin{abstract}
We prove that every wandering exposed Julia component of a rational
map is a point. Here a Julia component is exposed if it meets the
boundary of a Fatou domain. The theorem extends the corresponding result
for polynomials to rational maps, without any assumption on
critical recurrence. The proof combines an
enhanced nest with a covering estimate for multiply connected
pullback domains.
\end{abstract}

\maketitle

\section{Introduction}\label{sec:introduction}

We study the dynamics of rational maps $f:\Chat\to\Chat$ of degree
at least two. By Sullivan's theorem, every Fatou domain is
eventually periodic~\cite{Sullivan}. Julia components, however,
may wander. A Julia component is called \textbf{wandering} if its
forward images are pairwise distinct, and \textbf{exposed} if it
meets the boundary of a Fatou domain~\cite[Section~8.4]{PT}.
The main result of this paper is the following.

\begin{theorem}\label{thm:main}
Every wandering exposed Julia component of a rational map is a point.
\end{theorem}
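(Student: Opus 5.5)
We argue by contradiction. Suppose $K$ is a wandering exposed Julia component with $\operatorname{diam} K>0$, and let $U$ be a Fatou domain with $K\cap\partial U\neq\emptyset$. By Sullivan's theorem, after replacing $K$ and $U$ by forward images we may assume $U$ is periodic. Passing to an iterate, we may further assume $U$ is fixed. Since $K$ meets $\partial U$, the orbit $K_n=f^n(K)$ consists of pairwise distinct Julia components, each meeting $\partial U$. By the classification of periodic Fatou components, $U$ is an attracting or parabolic basin, or a rotation domain.

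The first step is to build a puzzle adapted to $U$. We use equipotential-type curves in $U$ (level curves of a Green-like function in the attracting and parabolic cases, or invariant curves in the rotation case), together with finitely many closed curves in the Fatou set that separate Julia components. We take $n$-th preimages of these graphs. The puzzle pieces of depth $n$ containing $K$ form a nest $P_0(K)\supset P_1(K)\supset\cdots$ with $\bigcap_n P_n(K)\supset K$. Because Julia components wander, there is no periodic puzzle piece containing $K$. The key claim is that this intersection equals $K$ and is a point.

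The second step is to produce a non-degenerate annulus $A$ surrounding $K$ in a first-level piece, so that $f^{n}$ maps suitable components $P_{n}(K)\setminus P_{n+m}(K)$ onto pieces carrying definite modulus. The standard method is an enhanced nest in the sense of Kozlovski--Shen--van Strien: choose first-return times to a critical puzzle neighbourhood and pull back with bounded degree. The difficulty is that for rational maps pieces need not be disks. The pullback domains can be multiply connected, because complementary components of the Julia set may be separated. Here I would invoke a covering estimate for multiply connected domains. If $f^k:V'\to V$ is proper of degree at most $d$, and $A\subset V$ is an annulus around $K_{n+k}$ disjoint from postcritical pieces except a bounded number, then the component of $f^{-k}(A)$ around $K_n$ has modulus at least $c(d)\,\modu(A)$. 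For critically non-recurrent orbits this is enough, since degrees stay bounded along infinitely many returns. For recurrent critical orbits we would apply the Kahn--Lyubich quasi-additivity law along the enhanced nest to obtain $\sum_n \modu(P_n\setminus P_{n+1})=\infty$. Then, by Grötzsch, $\bigcap P_n(K)$ is a point, contradicting $\operatorname{diam}K>0$.

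The main obstacle is the second step: getting moduli uniformly bounded below along the enhanced nest when the domains are multiply connected and critical points may recur. Annuli lying in Fatou components with nontrivial topology could collapse, so the modulus might not transfer. To handle this, I would first use exposedness to force every annulus to cross the fixed domain $U$ along the equipotential arcs, which gives a definite "width" in $U$ independent of depth. I would then prove the covering lemma by comparing extremal length of curve families in $V'$ with that of their images, losing only a factor of the degree.
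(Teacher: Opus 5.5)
You have the paper's architecture (puzzle from the fixed basin, bounded-degree pullbacks, an enhanced nest, a covering estimate for multiply connected pullbacks, Gr\"otzsch). However, the step that carries all the difficulty is missing, and the remedy you sketch would fail.

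\textbf{The covering estimate.} Your estimate, modulus at least $c(d)\modu(A)$ with $d=\deg(f^k:V'\to V)$, together with comparing extremal lengths ``losing only a factor of the degree'', is the trivial bound $\modu(A)/\deg$. It can transfer moduli along bounded-degree pullbacks, but it cannot produce complex bounds for the nest itself. In the persistently recurrent case the relevant pullbacks cross $Z$ nest levels. Their total degree is only controlled exponentially in $Z$ (the paper has $D\le R^{ZN_0}$), while superadditivity along the nest gains only linearly in $Z$. So no estimate whose loss depends on the total degree can give $\sum\modu=\infty$.

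What is needed is a Kahn--Lyubich type estimate in which the total degree $D$ enters only through a smallness threshold $\varepsilon(D)$. The multiplicative loss must depend only on the degree $d$ of the restriction to the component $A'$ over an intermediate disk $B'$, which the nest keeps bounded ($d\le C_0$) independently of $Z$. Because the pullbacks have holes, the disk form of the Covering Lemma does not apply. The paper proves the needed version (Proposition~\ref{ana:product}, Corollary~\ref{ana:physical-product}) in three steps:
\begin{enumerate}
\item pass to a Galois closure of $G:S\to T$;
\item apply the Quasi-Additivity Law to the deck-group orbit of the lifted archipelagos;
\item convert width into the modulus of $\Fill(S)\setminus\Fill(\overline{A_0})$.
\end{enumerate}
It then closes the argument (Proposition~\ref{ana:amplification}) as follows. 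Take $\mu=\modu(K_{n+2},K'_{n+2})$ minimal and small. Bound the filled source modulus by $C_2\mu$ using Lemma~\ref{ana:filled-pullback}. Combine the nest bounds $\modu(B,B')\ge C_0^{-1}\mu$ and $\modu(B,K_{n-Z})\ge ZC_0^{-1}\mu$ with the covering estimate to get $Z\le2C_0^4C_2^2$, a contradiction for large $Z$. Writing ``apply quasi-additivity along the enhanced nest'' names the tool but supplies none of this.

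\textbf{The exposedness remedy.} Using exposedness to give every annulus a definite width inside $U$, independent of depth, has no basis. Exposedness is used up in the reduction to complementary components of a fixed basin. Deep puzzle annuli meet $U$ only in pullbacks of an absorbing domain under high iterates, and their moduli around a recurrent critical component can genuinely degenerate. This already happens for polynomials, where every Julia component is exposed. If such a bound held, Gr\"otzsch would finish the proof with no recurrence analysis at all.

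\textbf{The reduction.} Your split into non-recurrent and recurrent cases is too coarse.
\begin{itemize}
\item The nest can only be built under persistent recurrence: finitely many successors are needed so that last successors exist and critical visits stay bounded. It is also built around a critical end in the limit set, not around $K$.
\item The reluctantly recurrent case, and limit sets containing a preperiodic end (for example accumulation on the parabolic end), need separate first-entry and successor arguments that give the bounded degree condition (Lemmas~\ref{dyn:periodic-case}--\ref{dyn:infinite-successors}).
\item Degrees must be counted by visits to complementary components containing critical points or poles. Such components can be unbranched for the boundary map, and a piece need not map properly onto a piece.
\end{itemize}

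\textbf{Minor points.} Rotation domains cannot occur, since a domain with at most two boundary components has no wandering one. Also, the nest shrinks to the complementary component of $U$ containing $K$, not to $K$ itself.
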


For polynomials, all Julia components are exposed, since the Julia
set is the boundary of the basin of infinity. By the Branner--Hubbard
conjecture~\cite{BH}, proved independently by Kozlovski and
van Strien~\cite{KS} and Qiu and Yin~\cite{QY}, every wandering Julia
component of a polynomial is a point. Previously, Cui, Peng, and
Tan proved that every wandering exposed Julia component of a
rational map has connected complement after finitely many
iterates~\cite{CPT}. The conclusion of Theorem~\ref{thm:main} was
obtained in~\cite[Theorem~1.1]{GXY} under the assumption that every
wandering exposed Julia component containing a critical point is nonrecurrent.
We remove this assumption.

A Julia component is called \textbf{buried} if it is disjoint from
the boundary of every Fatou domain~\cite[\S7]{McMullen88}. Thus Theorem~\ref{thm:main} implies that every wandering
Julia component which is not a point is buried.
McMullen constructed hyperbolic rational maps with buried wandering
Jordan curves as Julia components~\cite{McMullen88}. Buried wandering
Jordan curves also occur as Julia components of cubic rational maps~\cite{Godillon}.

For geometrically finite rational maps, every wandering Julia
component is a point or a Jordan curve~\cite{PT}.
For cubic rational maps, without assuming geometric finiteness,
every wandering Julia component has at most two complementary
components after sufficiently many iterates~\cite{CPY}.
Exact annular systems have also been used to construct
postcritically finite rational maps whose connected Julia sets
contain wandering Jordan curves~\cite{CPT16}. Here the curves
are proper subsets of the unique Julia component.

To prove Theorem~\ref{thm:main}, it suffices to show that every wandering boundary component of an
infinitely connected invariant attracting or parabolic Fatou domain
is a point. Holomorphic models for such domains are known in the
attracting case~\cite{CP} and the parabolic case~\cite{GGP1,GGP2}.
We fix such a Fatou domain and construct a puzzle directly in
$\Chat$, using pullbacks of an absorbing domain. Our proof
does not use these holomorphic models. In the parabolic case, the
absorbing domain agrees with an attracting petal near the parabolic
fixed point.

We study the complementary components of this fixed Fatou domain
under the map induced by the action of $f$ on their boundaries.
First-entry and successor arguments, following~\cite{ZengFJ,RYZ}, reduce the proof
to persistent recurrence. In particular, two successive first
entries give pullbacks of bounded degree when the limit set of a
wandering complementary component contains a preperiodic component
or a wandering component with a strictly smaller limit set.
This argument also treats accumulation on the
component containing the parabolic fixed point, whether or not
that component is a point. The main difficulty is to prove complex
bounds for a nest of puzzle pieces in the persistently recurrent case.

We use the enhanced-nest construction of Kozlovski, Shen, and
van Strien~\cite{KSS} and its versions for puzzles
in~\cite{PQRTY,RY}. Related arguments for Newton maps appear
in~\cite{DS,WYZ,RYZ}. In our setting, the map on a surrounding
puzzle boundary may have degree one even when the enclosed
complementary component contains a critical point. In the nest
construction, we count visits to critical complementary components
even when the boundary degree is one. The nest annuli avoid the
forward orbits, under the induced map, of the critical components
in the limit set. Along the selected pullback chains,
the iterates of $f$ restrict to unbranched coverings of annuli.

The Kahn--Lyubich Covering Lemma, in its usual disk form, concerns
proper holomorphic maps between disks~\cite{KL}. In our setting,
a complementary component of the Fatou domain may contain a pole.
Although an iterate of $f$ maps one puzzle boundary onto another,
it need not map the enclosed piece properly onto the piece
bounded by the image curve. We use components of inverse images
under these iterates whose fillings
are the required pieces. These inverse components may be multiply
connected, so the disk form of the lemma does not apply directly.

We establish a covering lemma for proper holomorphic maps from
bounded finitely connected planar domains onto a disk
(Corollary~\ref{ana:physical-product}). Following Kahn and Lyubich,
we pass to a Galois branched covering and apply their Quasi-Additivity Law.
We then pass from the width estimate to a modulus estimate for
an annulus in the original plane. The smallness threshold depends
only on the total degree $D$. The coefficient is $2d^2$, where $d$
is the degree of the restriction to a chosen inverse component
of a smaller target disk. The nest and pullback estimates give
the required bounds for these two degrees and yield complex
bounds in the persistently recurrent case.

\noindent The paper is organized as follows. Section~\ref{dyn:setup}
gives the reduction, the construction of the puzzle, and the
relation between maps on puzzle boundaries and actual inverse components;
it also proves a modulus comparison for filled pullbacks.
Section~\ref{dyn:reduction} proves the bounded degree criterion
and reduces the problem to the persistently recurrent case.
Section~\ref{ana:kl-section} proves the covering estimate for
finitely connected planar domains. Section~\ref{comb:section} uses an
enhanced nest to prove complex bounds and complete the proof
of Theorem~\ref{thm:main}. Appendix~\ref{app:enhanced-nest}
gives the construction and proves the properties of the nest.

\section{Ends, puzzles, and proper pullbacks}\label{dyn:setup}

We reduce the theorem to an invariant attracting or parabolic
Fatou domain and construct a puzzle in $\Chat$. We then relate
the maps on piece boundaries to proper pullbacks and estimate
the degrees of these proper maps.

\begin{lemma}\label{dyn:basin-reduction}
Let $f$ be a rational map and $\Omega$ a Fatou domain of $f$.
Set $\Omega'=f(\Omega)$.
Then $f$ maps every component of $\partial\Omega$ onto a component
of $\partial\Omega'$.
Moreover, to prove Theorem~\ref{thm:main}, it suffices to show that every
wandering boundary component of an infinitely connected invariant
attracting or parabolic Fatou domain is a point.
\end{lemma}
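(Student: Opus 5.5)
The first assertion comes from properness and open mapping. The reduction then has three steps: push the wandering exposed component forward to a periodic Fatou domain by Sullivan's theorem, pass to an iterate that fixes the domain, and discard the cases that the classification of invariant Fatou domains rules out.

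\emph{First assertion.} Since $f:\Omega\to\Omega'$ is proper, $f(\partial\Omega)\subset\partial\Omega'$. Let $E$ be a component of $\partial\Omega$. Then $f(E)$ is connected and lies in a single component $E'$ of $\partial\Omega'$. To get surjectivity, I use that $\partial\Omega'$ has a neighborhood basis of $E'$ formed by Jordan curves in $\Omega'$ separating $E'$ from the rest of $\partial\Omega'$; such curves exist since components of a compact set in the sphere are intersections of clopen pieces. Take such a curve $\gamma\subset\Omega'$ and let $U'$ be the complementary disk containing $E'$. Let $V$ be the component of $f^{-1}(U')$ containing $E$. The map $f:V\to U'$ is proper, hence surjective. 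By construction $V\cap\partial\Omega$ is exactly the part of $\partial\Omega$ mapping into $E'$, so every point of $E'$ has a preimage in $\partial\Omega$ near $E$. Letting $\gamma$ shrink, a compactness argument gives $E'\subset f(E)$. More precisely, the family of components of $\partial\Omega\cap V$ decreases to finitely many components of $\partial\Omega$ mapping into $E'$. Among these, a connectedness and openness argument shows that the image of each component is closed and relatively open in $E'$, hence all of $E'$. I expect the main care to be here: show $f(E)$ is open in $E'$ relative to $\partial\Omega'$, using local degree at points of $E$ and the fact that $f$ maps $\Omega$ near $E$ onto $\Omega'$ near $f(E)$.

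\emph{Reduction.} Let $K$ be a wandering Julia component meeting $\partial\Omega$ for a Fatou domain $\Omega$. Images of Julia components are Julia components, and $f(K)$ meets $\partial f(\Omega)$, so exposure is preserved. By Sullivan's theorem, replacing $K$ by some $f^n(K)$ lets us assume $\Omega$ is periodic of period $p$. Then $K$ is wandering and exposed for $g=f^p$ with $g(\Omega)=\Omega$. If $f^n(K)$ is a point, then $K$ is a point, because $f^n$ restricted to a Julia component has finite fibers and a connected component with finite image is a point. $K$ is still a single $g$-Julia component, since $J(g)=J(f)$.

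Let $E$ be the component of $\partial\Omega$ in $K$. By the first part, $g^k(E)$ is a component of $\partial\Omega$ contained in $g^k(K)$; these are pairwise distinct, so $E$ is a wandering boundary component. If $E$ is a point, I must still conclude that $K$ is a point. I expect this to be the main obstacle. Here I would argue that $K$ is the intersection of the complementary components of $\Omega$ containing $E$ that are cut off by Jordan curves in $\Omega$. Indeed $K\cap\Omega=\emptyset$, and a Julia component meeting $E$ lies in every such complementary disk; if those disks shrink to $E$, then $K=E$. So the statement needed is the stronger one: the complementary component (end) of $\Omega$ attached to $E$ is a point. That is how I read ``boundary component is a point'' in the reduced statement, consistent with the later puzzle construction.

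\emph{Classification of $\Omega$.} Siegel disks and Herman rings have boundary components that are all periodic under the rotation, so they have no wandering boundary components, and they are not infinitely connected anyway. Superattracting domains are included in ``attracting''. Finally, if $\Omega$ is finitely connected, then $\partial\Omega$ has finitely many components and none can wander. Hence $\Omega$ is an infinitely connected invariant attracting or parabolic domain.
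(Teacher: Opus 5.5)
Your reduction follows the paper's route: Sullivan's theorem, the first return map, the wandering of the component $E$ of $\partial\Omega$ inside $K$, discarding finitely connected domains, Siegel disks and Herman rings, and pulling point-ness back through finite fibers. The genuine gap is in the first assertion, at the inclusion $E'\subset f(E)$.

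\textbf{Where the argument fails.} You take $V$ to be the component of $f^{-1}(U')$ containing $E$ and use that $f:V\to U'$ is onto. A preimage in $V$ of a point $y\in E'$ need not lie on $\partial\Omega$. It lies in $f^{-1}(\partial\Omega')$, and this set also contains the boundaries of the other components of $f^{-1}(\Omega')$. Such Fatou components can sit inside the end of $\Omega$ bounded by $E$ and touch $E$, so $V$ can meet them. The only preimage of $y$ that surjectivity gives you may lie on one of their boundaries.

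Relatedly, $V\cap\partial\Omega$ is not ``exactly the part of $\partial\Omega$ mapping into $E'$''. It maps into $U'\cap\partial\Omega'$, which contains other boundary components. Your fallback, openness of $f(E)$ in $E'$ via local degree at points of $E$, has the same defect. Near $x\in E$, the local preimage of $\Omega'$ may consist partly of pieces of other Fatou components, so points of $E'$ near $f(x)$ need not have preimages on $E$. The claim that the components of $\partial\Omega\cap V$ decrease to finitely many components mapping into $E'$ is also unproved at this stage.

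\textbf{The missing idea.} Pull back only through $f|_\Omega:\Omega\to\Omega'$, which is proper and open, so that limits of preimages automatically land on $\partial\Omega$.
\begin{itemize}
\item Choose a Jordan neighborhood $U$ of the end $\mathbf e$ with $\partial\mathbf e=E$ and $\partial U\subset\Omega$.
\item Choose a Jordan neighborhood $V$ of the end bounded by $E'$ with $\partial V\subset\Omega'$ and $\overline V\cap f(\partial U)=\varnothing$. This is possible because $f(\partial U)$ is a compact subset of $\Omega'$.
\item The set $f(\Omega\cap U)\cap V$ is nonempty, since points of $\Omega$ near $E$ map into $V$. It is open, and it is closed in $\Omega'\cap V$ by properness and the choice of $V$. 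Since $\Omega'\cap V$ is connected, this set equals $\Omega'\cap V$.
\item For $y\in E'$, take $y_k\in\Omega'\cap V$ with $y_k\to y$ and preimages $x_k\in\Omega\cap U$. Any limit point $x$ lies in $\overline U\cap\overline\Omega$ and satisfies $f(x)=y\notin\Omega'$, so $x\in\partial\Omega\cap\overline U$.
\item Shrinking $U$ to $\mathbf e$ and using $\mathbf e\cap\partial\Omega=E$ gives $y\in f(E)$.
\end{itemize}

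\textbf{The ``main obstacle'' in the reduction.} This is not an obstacle. The set $K$ is connected and disjoint from $\Omega$, so $K\subset\mathbf e$. If $\partial\mathbf e=\{z\}$, then $\Chat\setminus\{z\}$ is the disjoint union of the open sets $\mathbf e\setminus\{z\}$ and $\Chat\setminus\mathbf e\supset\Omega$. Connectedness forces $\mathbf e=\{z\}$. So ``the boundary component is a point'' and ``the end is a point'' are equivalent, and you need not reinterpret the reduced statement.
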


\begin{proof}
We first prove the boundary correspondence. Let $E$ be a component
of $\partial\Omega$. Since $f(\partial\Omega)=\partial\Omega'$
and $f(E)$ is connected, $f(E)$ lies in a unique component $E'$
of $\partial\Omega'$.
We now prove that $E'\subset f(E)$.
Choose a Jordan neighborhood $U$ of the component of
$\Chat\setminus\Omega$ whose boundary is $E$,
with $\partial U\subset\Omega$. Choose a similar neighborhood $V$
of the component of $\Chat\setminus\Omega'$ whose boundary is $E'$,
with $\partial V\subset\Omega'$ and
$\overline V\cap f(\partial U)=\varnothing$.
Since $f:\Omega\to\Omega'$ is open and proper,
$f(\Omega\cap U)\cap V$ is nonempty, open and closed in the
connected set $\Omega'\cap V$. It therefore equals $\Omega'\cap V$.
Taking limits at points of $E'$ gives
\[
 E'\subset f(\overline U\cap\partial \Omega).
\]
Letting $\overline U$ decrease to the component of
$\Chat\setminus\Omega$ whose boundary is $E$, we obtain
$E'\subset f(E)$. Thus $f(E)=E'$.

Let $J$ be a wandering exposed Julia component. Choose a Fatou
domain $\Omega$ and a component $E$ of $\partial\Omega$ meeting $J$.
Since $E$ is connected
and lies in the Julia set, $E\subset J$. The connected set $J$
avoids $\Omega$, so it lies in the component of
$\Chat\setminus\Omega$ whose boundary is $E$. If $E$ is a point,
this complementary component, and hence $J$, is a point.
Moreover, $E$ is wandering, since its forward images lie in the
pairwise disjoint Julia components $f^n(J)$.

By Sullivan's theorem~\cite{Sullivan}, some iterate of $\Omega$
is a periodic Fatou domain. The corresponding image of $E$ remains
wandering under the first return map of this domain.

The periodic domain has infinitely many boundary components,
since otherwise none could wander. In particular, it is neither
a Siegel disk nor a Herman ring, which have one and two boundary
components, respectively. The classification of periodic Fatou
domains therefore leaves only attracting and parabolic
Fatou domains~\cite{Milnor}.

If an iterate of the original boundary component is a point, the
component lies in a finite fiber of that iterate of $f$. Being connected,
it is a point as well.
\end{proof}

\subsection{Ends and their dynamics}

From now on, we fix a rational map $f$ with an infinitely connected
invariant attracting or parabolic Fatou domain $\Omega$, and set
$R=\deg f$.
We choose coordinates so that $\infty\in\Omega$.
In the parabolic case, we denote by $p$ the parabolic fixed point
on $\partial\Omega$.

\begin{definition}\label{def:source-ends}
An \textbf{end} is a component of $\Chat\setminus\Omega$.
We denote by $\mathcal E$ the set of all ends. An end is
called \textbf{critical} if it contains a critical point of $f$.
We write $\mathcal E_{\mathrm{crit}}$ for this finite family.
\end{definition}

Every end $\mathbf e$ is a full continuum, and $\partial\mathbf e$
is a component of $\partial\Omega$. The end $\mathbf e$ is a point
if and only if $\partial\mathbf e$ is a point.

To describe the topology and dynamics of ends, we take the
quotient of $\Chat$ whose equivalence classes are the ends and
the singletons $\{z\}$ for $z\in\Omega$. We denote the quotient
map by $\pi$.

\begin{lemma}\label{new:end-realization}\label{dyn:ends}
The following statements hold.
\begin{enumerate}
\item The quotient space is a topological sphere, and
$\pi|_\Omega$ is a homeomorphism onto the open set $\pi(\Omega)$.
When working in the quotient sphere, we identify $\Omega$ with $\pi(\Omega)$.
\item The images of the ends under $\pi$ form a compact totally disconnected
subset of the quotient sphere. We identify $\mathcal E$ with
this subset and give the \textbf{end space} $\mathcal E$ its
subspace topology.
\item Each point of $\mathcal E$ has a neighborhood basis in the quotient sphere
consisting of Jordan disks whose boundaries lie in $\Omega$.
\item The map $f|_\Omega$ extends to an orientation-preserving
branched self-covering $F$ of the quotient sphere, of degree
$\deg(f|_\Omega)$, with
\[
 F^{-1}(\mathcal E)=\mathcal E.
\]
Moreover, $F\circ\pi=\pi\circ f$ on the closure of $\Omega$ in
the original sphere.
\end{enumerate}
\end{lemma}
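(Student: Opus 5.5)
The proof rests on Moore's theorem for upper semicontinuous decompositions of the sphere. The decomposition $\mathcal D$ consists of the ends together with the singletons of $\Omega$. Every element is a compact, connected, non-separating subset of $\Chat$, since each end is a full continuum. The decomposition is nontrivial as long as it is not a single element, and here $\infty\in\Omega$. So it suffices to check upper semicontinuity. Let $K$ be an end and $W$ an open set containing $K$. Since $K$ is a component of the compact set $\Chat\setminus\Omega$, a standard compactness argument applies: the quasi-component of $K$ equals $K$. Hence there is a relatively clopen subset $C$ of $\Chat\setminus\Omega$ with $K\subset C\subset W$. The set $C$ and its complement in $\Chat\setminus\Omega$ are disjoint compacta. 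We can therefore choose a union of finitely many disjoint Jordan curves in $\Omega$ separating $C$ from $(\Chat\setminus\Omega)\setminus C$. To do this, take a smooth neighborhood of $C$ inside $W$ whose boundary avoids $\Chat\setminus\Omega$; its boundary curves then lie in $\Omega$. Passing to the component containing $K$, we get a Jordan domain $U$ with $K\subset U\subset W$ and $\partial U\subset\Omega$. We may take $U$ to be the complementary region of one curve containing $K$, after filling in the holes that are disjoint from $K$. This works because $K$ is full, so the curve whose inside contains $K$ exists. Any end meeting $U$ lies in $U$, because it is connected and misses $\partial U$. So $U$ is saturated, which gives upper semicontinuity. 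Points of $\Omega$ are handled trivially by small disks in $\Omega$. Moore's theorem then shows the quotient is a sphere, proving (1) once we note the following. Since $\Omega$ is an open saturated set of singletons, $\pi|_\Omega$ is a continuous injective open map onto $\pi(\Omega)$, which is open because $\pi$ is a closed map and $\pi(\Omega)$ is the complement of the closed set $\pi(\Chat\setminus\Omega)$.

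The same saturated Jordan disks give (3). Their images $\pi(U)$ are open Jordan disks in the quotient sphere, since $\pi$ is injective near $\partial U\subset\Omega$. They form a neighborhood basis of $\pi(K)$ because the sets $U$ can be chosen inside any saturated $W$. For (2), $\pi(\Chat\setminus\Omega)$ is compact, as the image of a compact set. For total disconnectedness, note that distinct ends $K,K'$ are separated by a curve $\partial U\subset\Omega$ as above. So $\pi(U)\cap\mathcal E$ is clopen in $\mathcal E$, and these sets separate any two points.

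For (4), define $F=\pi\circ f\circ\pi^{-1}$ on $\pi(\Omega)$. For an end $K$, Lemma~\ref{dyn:basin-reduction} (with $\Omega'=\Omega$) shows $f(\partial K)=\partial K'$ for some end $K'$; set $F(\pi(K))=\pi(K')$. This gives $F\circ\pi=\pi\circ f$ on $\overline\Omega$, because $\overline\Omega=\Omega\cup\bigcup_K\partial K$. Continuity at $\pi(K)$ comes from the Jordan neighborhoods. Take a small $U$ around $K$. The set $f(\overline U\cap\overline\Omega)$ is close to $f(\partial K)\subset K'$, so it lands in a prescribed saturated neighborhood of $K'$. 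Here we use that the sets $\overline U\cap\overline\Omega$ decrease to $\partial K$.

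Next we show $F^{-1}(\mathcal E)=\mathcal E$. Certainly $F(\mathcal E)\subset\mathcal E$, and $F$ maps $\Omega$ to $\Omega$. Points of $\Omega$ map into $\Omega$, so they are not preimages of ends, and equality holds.

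Finally, $F$ is a branched covering of degree $\deg(f|_\Omega)$. On $\Omega$, $f$ is a proper holomorphic map of that degree. The totally disconnected compact set $\mathcal E$ is removable for this structure, and I expect this step to be the main obstacle. The plan is as follows. Take a Jordan disk $V=\pi(U')$ around $\pi(K')$ as in (3), chosen so that $\partial U'$ avoids critical values of $f|_\Omega$. The components of $f^{-1}(\partial U')\cap\Omega$ are then finitely many Jordan curves. By the argument in the proof of Lemma~\ref{dyn:basin-reduction}, each component of $F^{-1}(V)$ is a finitely connected domain. Its complementary holes in the quotient sphere contain no points of $\mathcal E$, after shrinking $V$ so it is disjoint from the finitely many critical values on $\partial\Omega$-related ends. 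Hence each component of $F^{-1}(V)$ is a disk, mapped properly by $F$ of some finite degree. A proper map between disks that is a local covering off finitely many points is a branched covering. Orientation is preserved because $f$ is holomorphic on the dense set $\Omega$. The degree is read off from fibres over a point of $\Omega$.
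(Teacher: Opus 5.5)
Your treatment of (1)--(3), the definition and continuity of $F$, and $F^{-1}(\mathcal E)=\mathcal E$ is sound. It takes a different but valid route from the paper: you obtain upper semicontinuity and total disconnectedness by separating clopen pieces of $\Chat\setminus\Omega$ with curves in $\Omega$, whereas the paper uses Hausdorff limits of ends and the fact that $\pi$ is monotone. One repair: before filling holes, first shrink $W$ to a Jordan disk about $K$, which exists because $K$ is full; otherwise the filled domain can leave $W$. For upper semicontinuity alone, the unfilled component is already saturated.

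The genuine gap is the branched-covering assertion in (4), which is the real content of the lemma. Your claim that components of $F^{-1}(V)$ are disks rests on the premise that their holes contain no ends, and this is never proved. You cannot shrink $V$ away from $F(x)$ itself, which may be the image of a critical end, and ``critical values'' of $F$ are not yet defined. Moreover, ``local covering off finitely many points'' is simply asserted. A component of $F^{-1}(V)$ generally contains infinitely many points of $\mathcal E$, where $F$ is defined only through the correspondence $f(\partial K)=\partial K'$. There you have established only continuity, not openness, discreteness, or local injectivity.

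The missing idea is to prove that $F$ is open at points of $\mathcal E$. The paper does this by path lifting. Take a Jordan neighborhood $U$ of $x$ with $\partial U\subset\Omega$ and a Jordan disk $V\ni F(x)$ with $V\cap F(\partial U)=\varnothing$. The set $V\cap\Omega$ is connected because $\mathcal E$ is totally disconnected, so paths in $V\cap\Omega$ lift under the proper map $f:\Omega\to\Omega$ from a point of $U\cap\Omega$. The lifts cannot cross $\partial U$, and a limiting argument gives $V\subset F(U)$. Openness and density of $\Omega$ then bound every fibre by $\deg(f|_\Omega)$, by comparison with a regular value. A continuous, open, discrete self-map of the sphere is a branched covering \cite[Theorem~A.15]{BM}.

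Within your own framework there are two alternatives. You could show that the components $U_j\ni x$ of $F^{-1}(V_j)$ satisfy $F(U_j)=V_j$: each component of $U_j\cap\Omega$ maps properly, hence surjectively, onto the connected set $V_j\cap\Omega$. These $U_j$ also shrink to $x$, because $F^{-1}(F(x))\subset\mathcal E$ is totally disconnected. Alternatively, at noncritical ends you could get local injectivity from Riemann--Hurwitz for the rational map itself in $\Chat$, as in Lemma~\ref{dyn:coverage}, leaving only finitely many exceptional points. Some such argument is needed before the branched-covering conclusion follows.
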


\begin{proof}
\emph{The quotient sphere.}
The decomposition of $\Chat$ into ends and one-point sets in $\Omega$ is upper
semicontinuous: every Hausdorff limit of ends is connected and
lies in an end. Each decomposition element has connected complement,
so Moore's theorem~\cite{Moore} shows that the quotient is a topological sphere.
The restriction $\pi|_\Omega$ is a homeomorphism onto the open
set $\pi(\Omega)$.
From now on, we denote this quotient sphere by $\mathbb S^2$.

The set $\mathcal E=\mathbb S^2\setminus\pi(\Omega)$ is compact.
Since $\pi$ is closed and has connected fibers, the preimage
under $\pi$ of any continuum in $\mathbb S^2$ is a continuum in $\Chat$.
For a continuum in $\mathcal E$, this preimage lies in one end,
which $\pi$ collapses to a point. Thus every continuum in
$\mathcal E$ is a point, so $\mathcal E$ is totally disconnected.
A compact totally disconnected subset of a sphere
admits arbitrarily small Jordan neighborhoods of each of its
points with boundary disjoint from the set. This gives (3).
In particular, $\Omega$ is dense in $\mathbb S^2$.

\noindent
\emph{The map $F$.}
By Lemma~\ref{dyn:basin-reduction}, for each end $\mathbf e$ there
is a unique end $\mathbf e'$ with
$f(\partial\mathbf e)=\partial\mathbf e'$.
Define $F$ to agree with $f$ on $\Omega$ and to send the point
represented by $\mathbf e$ to the point represented by
$\mathbf e'$. Then $F^{-1}(\mathcal E)=\mathcal E$.
Since $\overline\Omega\cap\mathbf e=\partial\mathbf e$ in the
original sphere, this definition gives
$F\circ\pi=\pi\circ f$ on $\overline\Omega$.

\noindent
\emph{Continuity.}
Every fiber of $\pi$ meets $\overline\Omega$ in the original sphere,
since each end meets $\partial\Omega$. Thus
$\pi(\overline\Omega)=\mathbb S^2$. For a closed set
$E\subset\mathbb S^2$, the identity $F\circ\pi=\pi\circ f$ on
$\overline\Omega$ gives
\[
 F^{-1}(E)=\pi\bigl(\{z\in\overline\Omega:\pi(f(z))\in E\}\bigr).
\]
The set in braces is compact because $\pi\circ f$ is continuous.
Hence $F^{-1}(E)$ is compact and therefore closed, proving continuity.

\noindent
\emph{Openness.}
To prove openness at points of $\mathcal E$, fix $x\in\mathcal E$
and let $U\subset\mathbb S^2$ be any Jordan neighborhood of $x$
with $\partial U\subset\Omega$.
Choose a Jordan neighborhood $V$ of $F(x)$ with
$V\cap F(\partial U)=\varnothing$; this is possible because
$F(x)\in\mathcal E$ and $F(\partial U)\subset\Omega$.
We will show that $V\subset F(U)$.
Choose $x_0\in U\cap\Omega$ with $f(x_0)\in V$.

We next obtain preimages in $U\cap\Omega$.
For any $y\in V$, take $y_n\in V\cap\Omega$ with $y_n\to y$.
Since $V\cap\Omega=V\setminus\mathcal E$ is connected, join
$f(x_0)$ to $y_n$ by a path in $V\cap\Omega$, and lift this
path under the proper branched covering $f:\Omega\to\Omega$,
starting at $x_0$.
The lift cannot cross $\partial U$, since its image lies in $V$
and $V\cap F(\partial U)=\varnothing$.
Its endpoint therefore satisfies $x_n\in U\cap\Omega$ and
$f(x_n)=y_n$.

We now pass to a limit to prove that $y\in F(U)$.
By compactness of $\overline U$, a subsequence of $x_n$ converges
to $x'\in\overline U$. Continuity gives $F(x')=y$.
Since $y\notin F(\partial U)$, we have $x'\in U$.
Thus $V\subset F(U)$. Such Jordan neighborhoods $U$ form a
neighborhood basis at $x$, so $F$ is open at $x$.
The map $F$ is already open on $\Omega$, hence is open on
$\mathbb S^2$.

\noindent
\emph{The branched covering.}
Set $d=\deg(f|_\Omega)$. If $\mathbf e\in\mathcal E$ had $d+1$
distinct preimages under $F$, choose pairwise disjoint neighborhoods
of these preimages in $\mathbb S^2$. By openness of $F$ and density
of $\Omega$, the intersection of their images under $F$ contains a
regular value of $f|_\Omega$. Since $F^{-1}(\Omega)=\Omega$,
this value would have at least $d+1$ preimages under $f|_\Omega$,
a contradiction. Thus every fiber of $F$ is finite and hence
totally disconnected. Since $F^{-1}(\Omega)=\Omega$ and $F=f$ on
$\Omega$, a regular value of $f|_\Omega$ has exactly $d$ preimages
under $F$, each of local degree one. Hence $\deg F=d>0$.
By~\cite[Theorem~A.15]{BM}, the continuous open map $F$ is therefore
an orientation-preserving branched covering.
\end{proof}

\begin{definition}\label{def:end-dynamics}
Let $F:\mathbb S^2\to\mathbb S^2$ be the branched covering given
by Lemma~\ref{new:end-realization}.
\begin{enumerate}
\item Define $\sigma:\mathcal E\to\mathcal E$ by setting
$\sigma(\mathbf e)=\mathbf e'$ when $F$ sends the point represented
by $\mathbf e$ in $\mathbb S^2$ to the point represented by
$\mathbf e'$. Thus
\[
 f(\partial\mathbf e)=\partial\sigma(\mathbf e).
\]
By Lemma~\ref{new:end-realization}, $\sigma$ is continuous, and each
end has at most $\deg(f|_\Omega)$ preimages under $\sigma$.
\item An end $\mathbf e$ is called \textbf{periodic} if
$\sigma^k(\mathbf e)=\mathbf e$
for some $k\ge1$, and \textbf{preperiodic} if some iterate under
$\sigma$ is periodic. It is called \textbf{wandering} if all its
forward images under $\sigma$ are distinct.
\end{enumerate}
\end{definition}

For a bounded connected set $X\subset\mathbb C$, let $\Fill(X)$
be the union of $X$ and all bounded components of
$\mathbb C\setminus X$. Thus $\Fill(\partial\mathbf e)=\mathbf e$
for every end $\mathbf e$. If $X$ is a bounded domain with finitely
many Jordan boundary curves, $\Fill(X)$ is the open disk bounded
by its outer boundary.

Recall that $R=\deg f$.

\begin{lemma}\label{dyn:coverage}
There are at most $2R-2$ critical ends. Every pole outside $\Omega$
belongs to a critical end. If $\mathbf e$ is not critical, then $f$
is univalent on a neighborhood of $\mathbf e$ and maps $\mathbf e$
onto $\sigma(\mathbf e)$. Every branch point of $F$ in
$\mathcal E$ corresponds to a critical end.
\end{lemma}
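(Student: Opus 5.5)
Since ends are pairwise disjoint and a critical end contains at least one of the at most $2R-2$ critical points of $f$, the first claim is just a count: distinct critical ends contain distinct critical points, so there are at most $2R-2$ of them.

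For the pole claim, let $q\notin\Omega$ be a pole and $\mathbf e$ the end containing it. Since $\infty\in\Omega$ and $\Omega$ is invariant, $\infty\in f(\Omega)$, so $f^{-1}(\infty)$ meets $\Omega$; hence $\infty$ has a preimage in $\Omega$ as well as $q\in\mathbf e$. I would first show that if $\mathbf e$ contains no critical point, then $f$ is univalent on a neighborhood of $\mathbf e$, and then derive a contradiction with $f(q)=\infty\in\Omega$. The univalence step runs as follows. The end $\mathbf e$ is a full continuum containing no critical point, so by Lemma~\ref{new:end-realization}(3), pulled back through $\pi$, it has a Jordan neighborhood $U$ with $\partial U\subset\Omega$ and $\overline U$ free of critical points (take $U$ inside a critical-point-free neighborhood of the compact set $\mathbf e$). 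On the simply connected domain $U$, $f$ is a local homeomorphism. Choose $U$ so small that $\partial U$ lies in a small neighborhood of $\partial\mathbf e$ in $\Omega$, so $f(\partial U)$ is near $f(\partial\mathbf e)=\partial\sigma(\mathbf e)$, which is compact and disjoint from $\infty$ if $\sigma(\mathbf e)\neq$ the end through infinity (there is none, since $\infty\in\Omega$).

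The main obstacle is getting injectivity of $f|_U$, because a local homeomorphism of a disk need not be injective. I would handle it through the quotient map $F$. The point $x=\pi(\mathbf e)$ is not a branch point of $F$. Indeed, a branch point would give a neighborhood where $F$ is $k$-to-$1$ with $k\ge2$ away from $x$, and then $f|_{U\cap\Omega}$ would be $k$-to-$1$ onto a punctured neighborhood of $F(x)$. Lifting a small loop in $\Omega$ around $\sigma(\mathbf e)$ would give a loop around $\mathbf e$ mapped with degree $k$. By the argument principle on the disk bounded by that loop (which contains no critical points and, as we will show, no poles), $f$ would be a degree-$k$ branched cover of a disk, hence have $k-1$ critical points by Riemann--Hurwitz, a contradiction. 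To keep this clean, I would argue directly on $\Chat$: take a Jordan curve $\gamma\subset\Omega$ close to $\partial \mathbf e$ surrounding $\mathbf e$, with $f(\gamma)$ avoiding $\sigma(\mathbf e)$ and $\infty$. Let $D$ be the disk bounded by $\gamma$ containing $\mathbf e$. Choose $D$ disjoint from critical points. Then each component of $f^{-1}(W)$, where $W$ is the component of $\Chat\setminus f(\gamma)$ containing $\sigma(\mathbf e)$, is a proper map, and the component $V\subset D$ containing $\mathbf e$ maps properly onto $W$ with no critical points. By Riemann--Hurwitz, $W$ simply connected and $V$ planar force $V$ to be a disk and $f:V\to W$ to have degree $1$. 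Poles are harmless here, since $W\not\ni\infty$ is arranged by choosing $\gamma$ so that $f(\gamma)$ separates $\sigma(\mathbf e)$ from $\infty\in\Omega$. Hence $f$ is univalent on $V\supset\mathbf e$, and $f(V)=W\not\ni\infty$, so $\mathbf e$ contains no pole. This also proves the pole claim.

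With univalence in hand, $f(\mathbf e)$ is a full continuum in $W$ whose boundary is $f(\partial\mathbf e)=\partial\sigma(\mathbf e)$. Because $f|_V$ is a homeomorphism onto $W$ carrying $V\cap\Omega$ into $\Omega$, $f(\mathbf e)$ is the filled set $\Fill(\partial\sigma(\mathbf e))=\sigma(\mathbf e)$. Finally, the local degree of $F$ at $\pi(\mathbf e)$ equals $\deg(f:V\to W)=1$, computed on $V\cap\Omega$. So any branch point of $F$ in $\mathcal E$ is a critical end.
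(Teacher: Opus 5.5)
Your argument is correct in substance, but it localizes the pullback differently from the paper.

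\emph{The paper's route.} It takes bounded Jordan disks $V_j$ with boundaries in $\Omega$, shrinking to $\sigma(\mathbf e)$ and avoiding $f(\infty)$, and lets $U_j$ be the component of $f^{-1}(V_j)$ containing $\partial\mathbf e$. Since $f(\Omega)\subset\Omega$, it gets $\bigcap_j\overline{U_j}\subset\mathbf e$. So $U_j$ eventually has no critical point, and $f:U_j\to V_j$ is conformal. Then $f(\mathbf e)=\sigma(\mathbf e)$ and the absence of poles follow by intersecting and from boundedness of $V_j$.

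\emph{Your route.} You first fix a critical-point-free Jordan disk $D\supset\mathbf e$ with $\gamma=\partial D\subset\Omega$. You then pull back the component $W$ of $\Chat\setminus f(\gamma)$ containing $\sigma(\mathbf e)$. The pullback is trapped in $D$ because $f(\gamma)\cap W=\varnothing$. This replaces the limiting argument by a one-shot trapping argument. The price is a separate topological step identifying $f(\mathbf e)$: it is full with boundary $\partial\sigma(\mathbf e)$. Its complement meets $\Omega$, because $f|_V$ is injective and sends points of $(V\setminus\mathbf e)\cap\Omega$ into $\Omega$. Hence $f(\mathbf e)=\Fill(\partial\sigma(\mathbf e))=\sigma(\mathbf e)$. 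That is what your remark about $V\cap\Omega$ amounts to, and it is fine.

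Two points need tightening.

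First, you cannot take the component of $f^{-1}(W)$ containing $\mathbf e$. A priori you only know $f(\partial\mathbf e)=\partial\sigma(\mathbf e)\subset W$, and $f(\mathbf e)$ could still meet $f(\gamma)$. Take instead the component $V$ containing $\partial\mathbf e$. Once the covering argument shows $V$ is a simply connected domain inside the bounded disk $D$, it contains $\Fill(\partial\mathbf e)=\mathbf e$, exactly as in the paper.

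Second, you assert without proof that $\gamma$ can be chosen so that $f(\gamma)$ separates $\sigma(\mathbf e)$ from $\infty$. This is true (for instance by continuity and openness of $F$), but you do not need it.
\begin{itemize}
\item $W$ is simply connected, being a complementary component of the connected set $f(\gamma)$.
\item A simple pole is a regular point of $f$ on $\Chat$.
\item So $f:V\to W$ is a homeomorphism whether or not $\infty\in W$.
\item The pole claim then follows from $f(\mathbf e)=\sigma(\mathbf e)$ and $\infty\in\Omega$. This is the contradiction you announced at the outset.
\end{itemize}
The abandoned sketch through branch points of $F$ can simply be deleted.
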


\begin{proof}
There are at most $2R-2$ critical points, and hence at most
$2R-2$ critical ends.

Let $\mathbf e$ be a noncritical end. Using Jordan neighborhoods
as in the proof of Lemma~\ref{dyn:basin-reduction}, choose bounded Jordan disks
$V_j\subset\mathbb C$, with boundaries in $\Omega$, whose nested closures
decrease to $\sigma(\mathbf e)$ and avoid $f(\infty)$.
Let $U_j$ be the component of $f^{-1}(V_j)$ containing
$\partial\mathbf e$.

The continuum $\bigcap_j\overline{U_j}$ contains
$\partial\mathbf e$ and is disjoint from $\Omega$, since
$f(\Omega)\subset\Omega$; hence it lies in $\mathbf e$.
For large $j$, $U_j$ therefore contains no critical point of $f$, so
the proper map $f:U_j\to V_j$ is an unbranched covering.
By the Riemann--Hurwitz formula, $U_j$ is a disk and
$f:U_j\to V_j$ has degree one. The bounded disk $U_j$ contains
$\partial\mathbf e$, hence also $\mathbf e$.
Taking intersections gives $f(\mathbf e)=\sigma(\mathbf e)$.
Thus $f$ is univalent near $\mathbf e$, and $\mathbf e$ contains
no pole because $V_j$ is bounded. The corresponding point of
$\mathbb S^2$ is not a branch point of $F$.
\end{proof}

\begin{remark}
A critical end need not give a branch point of $F$.
The local degree of $F$ at the corresponding point is determined
by $f|_\Omega$ near the boundary of the end, whereas criticality
of the end also counts
critical points of $f$ in its interior.
\end{remark}

\subsection{The absorbing domain and its puzzle}

A domain $W\subset\Omega$ is \textbf{absorbing} if $f(W)\subset W$
and every compact subset of $\Omega$ is mapped into $W$ by all
sufficiently large iterates of $f$. We construct the puzzle by
successive pullbacks of one such domain, chosen so that all its
pullbacks in $\Omega$ are connected.

\begin{lemma}\label{new:absorbing-core}
There is an absorbing domain $W\subset\Omega$ containing
$\infty$ and all critical points and poles in $\Omega$.
Its boundary consists of finitely many Jordan curves. The part
$\partial W\cap\Omega$ contains no postcritical point of $f$.
In the attracting case, $\overline W\subset\Omega$ and
$f(\overline W)\subset W$. In the parabolic case,
\[
  \overline W\setminus\Omega=\{p\},\qquad
  f(\overline W)\subset W\cup\{p\},
\]
and $W$ agrees with an attracting petal near $p$.
The point $p$ may lie on a forward critical orbit. For $n\ge0$, set
$W_n=f^{-n}(W)\cap\Omega$. Then each $W_n$ is connected, and every
component of $\Chat\setminus W_n$ contains an end.
\end{lemma}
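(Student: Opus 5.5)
We build $W$ from a standard absorbing neighbourhood of the attracting point (or an attracting petal at $p$), thicken it so that it reaches every critical point and pole in $\Omega$ and $\infty$, and then adjust the boundary so that it avoids the countable postcritical set. Connectedness of $W_n$ and the end property then follow by induction using the Riemann--Hurwitz formula and the maximum principle.

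\emph{Construction of $W$.} In the attracting case, let $a$ be the attracting fixed point. Choose a small closed linearizing (or B\"ottcher, if $a$ is superattracting) disk $\overline{D}$ around $a$ with $f(\overline D)\subset D$. The finitely many points $\infty$ and the critical points and poles in $\Omega$ all lie in $\Omega$, so some iterate $f^N$ maps each of them into $D$. Join them to $D$ by paths in $\Omega$, thicken this graph to a compact set $K\subset\Omega$, and take $\overline W=\bigcup_{j=0}^{N'} f^{-j}(\overline D)$-style neighbourhoods. A cleaner route is to set
\[
W=\text{component of } f^{-M}(D)\cap\Omega \text{ containing } K\cup D
\]
for large $M$. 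This set is forward invariant with $f(\overline W)\subset W$, because $f^{-M}(D)\supset f^{-(M-1)}(D)$ and $f(\overline D)\subset D$. It is absorbing because every compact subset of $\Omega$ eventually lands in $D$. Its boundary consists of finitely many analytic curves, provided $\partial D$ avoids the critical values of $f^M$. Since the postcritical set is countable, we can perturb the radius of $D$ so that $\partial W$ contains no postcritical point and no critical point of $f^M$; this makes $\partial W$ a finite union of Jordan curves.

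In the parabolic case we argue the same way with $D$ replaced by an attracting petal $P$ at $p$, with $f(\overline P)\subset P\cup\{p\}$. The pullbacks then agree with $P$ near $p$, since $f$ is injective near $p$ and the component containing $P$ is locally just $P$. We again choose the petal boundary generically so that it misses the countable postcritical set away from $p$. This is allowed because $p$ may itself be postcritical.

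\emph{Connectedness and the end property.} We prove that $W_n$ is connected by induction. Since $W\supset f(W)$, we have $W_{n}\subset W_{n+1}$. Each component $U$ of $W_{n+1}$ maps properly onto $W_n$, which is connected. By the first step of the construction, every component of $f^{-1}(W)\cap\Omega$ meets $W$: the Fatou domain $\Omega$ is invariant, so the components of $f^{-1}(W_n)$ inside $\Omega$ map properly onto $W_n$ under $f|_\Omega$, and the critical points of $f|_\Omega$ all lie in $W\subset W_n$. If $U$ were a component disjoint from $W$, then $f:U\to W_n$ would be an unbranched covering of finite degree. Then $\Omega$ would contain the disjoint proper pieces $U$ and $U_0\supset W$. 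Lifting paths in $\Omega$ from points of $U$ shows that $U\cup U_0$ disconnects the preimage structure, and this contradicts the connectedness of $\Omega = \bigcup_n W_n$ together with the proper map $f:\Omega\to\Omega$ of degree $\deg(f|_\Omega)$. I expect this to be the main obstacle. What I would try first is the following argument: any path in $\Omega$ joining $U$ to $W$ has compact image, so it lies in some $W_m$. Applying $f^{m}$ and using the absorbing property, both $U$ and $W$ lie in the same component of $W_{m}$. Since pullbacks of a connected set by the branched covering $f|_\Omega$, whose critical values all lie in $W$, have each component containing a preimage of the critical-value-containing region, one gets that the number of components of $W_{n+1}$ is bounded by that of $W_n$, which is $1$.

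Finally, let $C$ be a component of $\Chat\setminus W_n$ and suppose $C\subset\Omega$. Then $C$ is a compact subset of $\Omega$. Its iterates $f^k(C)$ enter $W$, so $f^{n+k}$ maps $C$ into $W$, and hence $C\subset W_{n+k}$. The set $C$ is bounded by curves of $\partial W_n$, and $f^n(\partial W_n)\subset\partial W$. By the maximum principle applied to the Green-type exhaustion, or equivalently because $f^n$ maps $C$ properly onto a union of complementary pieces of $W$, the set $f^n(C)$ would contain a complementary component of $W$. Every complementary component of $W$ contains an end, by the choice of $W$ as the component containing $K$ together with fullness of $D$ relative to $\Omega$. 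Since $f^{-1}(\mathcal E)\cap\Omega=\varnothing$, this is a contradiction, and so $C$ contains an end.
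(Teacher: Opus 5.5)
Your attracting-case construction is essentially the paper's: pull back an absorbing disk and take the component containing a connected set through $\infty$, the poles and the critical points in $\Omega$. There are, however, two genuine gaps.

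\emph{Connectedness of $W_n$.} The principle you invoke does not work. Every component of $f^{-1}(W_n)\cap\Omega$ maps onto $W_n$, so it contains preimages of the critical values, but these need not be critical points. Knowing that all critical points lie in $W$ therefore does not exclude a component $U$ that contains no critical point and covers the multiply connected set $W_n$ without branching. Your path argument only puts $U$ and $W$ in a common component of $W_m$ for some large $m$, which says nothing about $W_{n+1}$. The missing idea is to use the poles rather than the critical points. Let $E\subset W$ be a connected set containing $\infty$ and $f^{-1}(\infty)\cap\Omega$ (your thickened graph). Since $\infty\in W\subset W_n$, every component of $f^{-1}(W_n)\cap\Omega$ maps properly onto $W_n$. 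Hence it contains a point of $f^{-1}(\infty)\cap\Omega\subset E$. Since $E\subset W\subset W_{n+1}$ is connected, there is only one component. This is exactly why the statement requires $W$ to contain $\infty$ and the poles in $\Omega$.

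\emph{The parabolic case.} The component of $f^{-M}(P)\cap\Omega$ containing $P$ does not satisfy $\overline W\setminus\Omega=\{p\}$. For large $M$ it has at least two complementary components, since $\Omega$ is infinitely connected, and at most one of them contains $p$. For a complementary component $B$ with $p\notin B$, properness of $f^M:W\to P$ yields a point $z\in\partial B$ with $f^M(z)=p$. Thus $z\in\overline W\setminus\Omega$ and $z\ne p$; injectivity of $f$ near $p$ controls only the point $p$ itself. Consequently:
\begin{itemize}
\item $f(\overline W)\subset W\cup\{p\}$ fails whenever such a $z$ has $f(z)\ne p$;
\item at a critical preparabolic point several cusps of $W$ may meet, so $\partial W$ need not consist of disjoint Jordan curves.
\end{itemize}
The paper repairs this as follows. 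It removes pairwise disjoint small closed Jordan neighbourhoods $\mathcal N_z$ of all $z\in\overline V\setminus\Omega$ with $z\ne p$. They are chosen in decreasing order of the least $j$ with $f^j(z)=p$, and so small that $f^{-1}(\mathcal N_z)\cap\overline V\subset\bigcup\operatorname{int}\mathcal N_{z'}$, the union over $z'\in\overline V\setminus\Omega$ with $f(z')=z$. This is what makes $f(\overline W)$ miss every $\mathcal N_z$. Each complementary component of the new $W$ still contains an end, because $\mathcal N_z$ meets $\Chat\setminus V$ at $z$.

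\emph{A smaller point.} Your claim that every complementary component of $W$ contains an end rests only on ``fullness of $D$ relative to $\Omega$''. It needs an actual argument, for instance an open-mapping argument for $f^M$ on a complementary component lying in $\Omega$, using that $\partial D$ avoids the critical values of $f^M$. The paper sidesteps this by filling the complementary components of $U_k$ that lie in $\Omega$ before excising.
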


\begin{proof}
In the attracting case, choose a Jordan disk $U$ about the fixed
point such that $\overline U\subset\Omega$ and
$f(\overline U)\subset U$. In the parabolic case, choose a Jordan
attracting petal $U\subset\Omega$ such that
\[
 \overline U\setminus\Omega=\{p\},\qquad
 f(\overline U)\subset U\cup\{p\};
\]
see~\cite{Milnor}. In either case, choose $\partial U\cap\Omega$
to avoid the postcritical points of $f$. The domain $U$ absorbs every compact
subset of $\Omega$.

Let $U_k$ be the component of $f^{-k}(U)$ containing $U$.
Then these domains are nested and exhaust $\Omega$. Join $\infty$, all points of
$f^{-1}(\infty)\cap\Omega$, and all critical points in $\Omega$
to a chosen point of $U$ by finitely many arcs in $\Omega$.
Denote the union of these arcs by $E$.
Fix $k\ge1$ so large that $E\subset U_k$. We have
\[
 f(\overline{U_k})\subset\overline{U_{k-1}},\qquad
 \overline{U_{k-1}}\cap\Omega\subset U_k.
\]

In the attracting case, $\partial U_k$ consists of disjoint Jordan
curves. In the parabolic case, $\partial U_k$ is locally a
simple arc except possibly at the finitely many points of
$\overline{U_k}\setminus\Omega\subset f^{-k}(p)$.
Fill every component of $\Chat\setminus U_k$ contained in
$\Omega$, and denote the resulting domain by $V$. Each component
of $\Chat\setminus V$ contains an end.

We show that $f(B)\subset V$ for every component $B$ of
$\Chat\setminus U_k$ contained in $\Omega$.
Since $f(\partial B)\subset U_k$, the intersection of $f(B)$ with
any component of $\Chat\setminus V$ is both open and closed in
that component. If this intersection were nonempty, the whole
component would lie in $\Omega$, contrary to its containing an end.
Thus
\[
 f(\overline V)\cap\Omega\subset V.
\]
In the attracting case, set $W=V$. Then
$\overline W\subset\Omega$ and $f(\overline W)\subset W$.

In the parabolic case, we have
$\overline V\setminus\Omega
=\overline{U_k}\setminus\Omega\subset f^{-k}(p)$.
The finite set $\overline V\setminus\Omega$ is forward invariant.
We remove small neighborhoods
of its points other than $p$, while keeping $U\cup E$.

Within an open neighborhood of $\partial\Omega$ disjoint from $E$,
choose pairwise disjoint small closed Jordan neighborhoods $\mathcal N_z$
of the points $z\in\overline V\setminus\Omega$, $z\ne p$,
avoiding $\overline U$. Choose $\mathcal N_z$ so that $V\cap\operatorname{int}\mathcal N_z$
has finitely many components, each a Jordan domain bounded by two
arcs of $\partial V$ meeting at $z$ and one arc of $\partial\mathcal N_z$.
The points $z$ may be critical or lie on a forward critical orbit.

We choose the neighborhoods $\mathcal N_z$ in decreasing order of the least
integer $j\ge1$ for which $f^j(z)=p$. At each step, take $\mathcal N_z$
small enough that
\[
 f^{-1}(\mathcal N_z)\cap\overline V
 \subset
 \bigcup_{\substack{z'\in\overline V\setminus\Omega\\ f(z')=z}}
       \operatorname{int}\mathcal N_{z'}.
\]
We also choose $\partial\mathcal N_z\cap V$ to avoid the postcritical
points of $f$. Set
\[
 W=V\setminus\bigcup_{z\ne p}\mathcal N_z,
\]
where the union is over the points just considered.
Then $W$ is connected, contains $U\cup E$, and has finitely many
Jordan boundary curves. The set $\partial W\cap\Omega$ contains
no postcritical point of $f$. Near $p$, $\partial W$ agrees with $\partial V$, and
$W$ agrees with an attracting petal.
Each $\mathcal N_z$ meets $\Chat\setminus V$
at $z$, so every component of $\Chat\setminus W$ contains a
component of $\Chat\setminus V$, and hence an end.

By the choice of the $\mathcal N_z$, $f(\overline W)$ avoids every $\mathcal N_z$.
Hence
\[
 \overline W\setminus\Omega=\{p\},\qquad
 f(\overline W)\subset W\cup\{p\}.
\]

In both cases, $E\subset W\subset f^{-1}(W)\cap\Omega$.
Every component of $f^{-1}(W)\cap\Omega$ maps properly onto $W$
under $f$, hence meets $f^{-1}(\infty)\cap\Omega\subset E$.
Since $W$ is connected, there is only one such component.
The same argument shows inductively that every $W_n$ is connected:
at each step, $W_n$ is forward invariant and contains $E$.
The sets $W_n$ increase and exhaust $\Omega$.

In the quotient sphere of Lemma~\ref{new:end-realization},
$W_n=F^{-n}(W)$. Hence $F^n$ maps each component of
$\mathbb S^2\setminus W_n$ onto a component of
$\mathbb S^2\setminus W$, which meets $\mathcal E$.
Since $F^{-n}(\mathcal E)=\mathcal E$, each component of
$\mathbb S^2\setminus W_n$ also meets $\mathcal E$.
Since $\pi$ is closed and has connected fibers, it maps each
component $X$ of $\Chat\setminus W_n$ onto a component
$\pi(X)$ of $\mathbb S^2\setminus W_n$, with
$X=\pi^{-1}(\pi(X))$. Since $\pi(X)$ meets $\mathcal E$,
$X$ contains an end.
\end{proof}

From now on, we fix $W$ as in Lemma~\ref{new:absorbing-core} and
build the puzzle from its pullbacks $W_n$.

\begin{definition}\label{def:puzzle}
A \textbf{(puzzle) piece} of depth $n$ is a component of
$\Chat\setminus\overline{W_n}$.
A \textbf{depth-$\boldsymbol{n}$ component} is a component of
$\Chat\setminus W_n$.
In the parabolic case, a \textbf{preparabolic vertex} of depth $n$ is a point
$z\in\partial W_n\setminus\Omega$. It satisfies $f^n(z)=p$ and may
be critical or lie on a forward critical orbit.
\end{definition}

Pieces are open Jordan disks. For each end $\mathbf e$ and depth
$n$, denote by $X_n(\mathbf e)$ the unique depth-$n$ component
containing $\mathbf e$. If $\mathbf e$ is contained in a piece
of depth $n$, denote that piece by $P_n(\mathbf e)$.
In the attracting case, $P_n(\mathbf e)$ always exists and
$X_n(\mathbf e)=\overline{P_n(\mathbf e)}$.
In the parabolic case, $X_n(\mathbf e)$ is a finite union of piece
closures, which meet only at preparabolic vertices. At a
depth-$n$ vertex $z$, at most
$\deg_z(f^n)$ pieces meet; we do not need this bound below.

\begin{lemma}\label{dyn:puzzle}\label{new:global-cylinders}
For every $n\ge0$, the depth-$n$ components have the following
properties.
\begin{enumerate}
\item There are finitely many components, each a full continuum.
Every end is contained in exactly one depth-$n$ component.
\item For every end $\mathbf e$, the sets $X_n(\mathbf e)$ satisfy
\[
 X_{n+1}(\mathbf e)\subset X_n(\mathbf e),\qquad
 \bigcap_{n\ge0}X_n(\mathbf e)=\mathbf e.
\]
If $\mathbf a\ne\mathbf e$ is another end, then
$X_n(\mathbf e)\cap X_n(\mathbf a)=\varnothing$ for all sufficiently
large $n$.
\item For every end $\mathbf e$ and $r\ge1$, the map $\sigma^r$
takes the ends in $X_{n+r}(\mathbf e)$ onto the ends in
$X_n(\sigma^r(\mathbf e))$. Moreover,
\[
 f^r\bigl(\partial X_{n+r}(\mathbf e)\bigr)
 =\partial X_n(\sigma^r(\mathbf e)).
\]
\item If $X_{n+1}(\mathbf e)$ contains no critical end,
then $f$ is univalent on a neighborhood of $X_{n+1}(\mathbf e)$ and
$f(X_{n+1}(\mathbf e))=X_n(\sigma(\mathbf e))$.
\end{enumerate}
\end{lemma}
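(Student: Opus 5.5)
I will carry out most of the argument in the quotient sphere $\mathbb S^2$ of Lemma~\ref{new:end-realization}, where $W_n=F^{-n}(W)$ and ends are points. Then I transfer back via $\pi$, which is closed with connected fibers and satisfies $X=\pi^{-1}(\pi(X))$ for every depth-$n$ component $X$, as shown at the end of the proof of Lemma~\ref{new:absorbing-core}.

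\emph{Step 1 (finiteness and fullness).} The boundary of $W$ consists of finitely many Jordan curves, and $f^n:W_n\to W$ is proper of finite degree with no critical values on $\partial W\cap\Omega$. Hence $\partial W_n$ consists of finitely many curves, which are Jordan except at the finitely many preparabolic vertices. So $\Chat\setminus W_n$ has finitely many components. Each component is closed and connected. Since $W_n$ is connected, every component of $\Chat\setminus W_n$ has connected complement, so it is a full continuum. Every end is connected and disjoint from $W_n\subset\Omega$, so it lies in exactly one such component.

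\emph{Step 2 (nesting and shrinking).} Since $W_n\subset W_{n+1}$, we get $X_{n+1}(\mathbf e)\subset X_n(\mathbf e)$. The intersection $K=\bigcap_n X_n(\mathbf e)$ is a continuum containing $\mathbf e$. It is disjoint from $\bigcup_n W_n=\Omega$, so it is contained in one end, namely $\mathbf e$. For distinct ends the intersections are disjoint. By compactness, the decreasing sets $X_n(\mathbf e)\cap X_n(\mathbf a)$ must then be empty for large $n$.

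\emph{Step 3 (dynamics).} $F^r$ maps $\mathbb S^2\setminus W_{n+r}$ onto $\mathbb S^2\setminus W_n$, and it is proper and open. So $F^r$ maps each component $\pi(X_{n+r}(\mathbf e))$ onto a component of $\mathbb S^2\setminus W_n$, by the usual open–closed argument. The image component contains $F^r(\mathbf e)=\sigma^r(\mathbf e)$, so it is $\pi(X_n(\sigma^r(\mathbf e)))$. Using $F^{-1}(\mathcal E)=\mathcal E$, this gives the surjection of ends. For the boundary identity, $\partial X_{n+r}(\mathbf e)\subset\partial W_{n+r}\subset\overline\Omega$, and $F^r\circ\pi=\pi\circ f^r$ on $\overline\Omega$. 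Therefore $f^r(\partial X_{n+r})\subset\partial W_n\cap X_n(\sigma^r\mathbf e)=\partial X_n(\sigma^r\mathbf e)$.

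For surjectivity onto the boundary, I would follow Lemma~\ref{dyn:basin-reduction}: a point of $\partial X_n(\sigma^r\mathbf e)$ is a limit of points of $W_n$ near the component. I lift these under the proper map $f^r:W_{n+r}\to W_n$ into a thin collar of $X_{n+r}(\mathbf e)$ and take limits. This step is the main obstacle. One must check that lifts stay adjacent to the correct component, especially at preparabolic vertices in the parabolic case. I would handle this by using that $F^r$ restricted to a small neighborhood of $\pi(X_{n+r})$ is proper onto a neighborhood of $\pi(X_n)$.

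\emph{Step 4.} Suppose $X_{n+1}(\mathbf e)$ contains no critical end. Then it contains no critical point of $f$ at all: its points lie in ends or in $\Omega\setminus W_{n+1}$, and all critical points in $\Omega$ lie in $W\subset W_{n+1}$. It also contains no pole, by Lemma~\ref{dyn:coverage} and because the poles in $\Omega$ lie in $W$. Take a small neighborhood $U$ of $X_{n+1}(\mathbf e)$ that is the component of $f^{-1}(V)$ for a Jordan neighborhood $V$ of $X_n(\sigma\mathbf e)$, with $\partial V$ inside $W_n$ close to the boundary. Then $f:U\to V$ is an unbranched proper covering of a disk, so it has degree one by Riemann--Hurwitz, exactly as in Lemma~\ref{dyn:coverage}. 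By Step 3, $f$ maps the full continuum $X_{n+1}(\mathbf e)$ onto $X_n(\sigma\mathbf e)$, because a univalent map of a disk sends the filled boundary to the filled image boundary.
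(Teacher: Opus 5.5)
Your proposal is correct and follows the paper's proof essentially step for step. The boundary surjectivity you flag as the main obstacle is handled there simply by the fact that the proper map $f^r:W_{n+r}\to W_n$ sends each boundary component onto a boundary component, which is the purely topological argument of Lemma~\ref{dyn:basin-reduction}, so preparabolic vertices need no special care; in Step 4 you should also state explicitly, as the paper does, that $f(X_{n+1}(\mathbf e))\subset X_n(\sigma(\mathbf e))$ and that $X_{n+1}(\mathbf e)$ is a component of $f^{-1}(X_n(\sigma(\mathbf e)))$ (both follow from $W_{n+1}=f^{-1}(W_n)\cap\Omega$ and the absence of critical ends), since this is what makes the component $U$ of $f^{-1}(V)$ contain $X_{n+1}(\mathbf e)$ and be small.
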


\begin{proof}
For (1), $W_n$ is a component of $f^{-n}(W)$ and is therefore
finitely connected. Its complementary components are finitely
many full continua. Every end is connected and disjoint from
$W_n$, so it lies in exactly one of these components.

For (2), the sets $W_n$ increase and exhaust $\Omega$. Hence the
$X_n(\mathbf e)$ are nested, and their intersection is a connected
subset of $\Chat\setminus\Omega$ containing $\mathbf e$.
This intersection therefore equals $\mathbf e$. Since distinct
ends are disjoint, compactness gives the separation assertion in (2).

For (3), $W_{n+r}=F^{-r}(W_n)$ in the quotient sphere.
Thus $F^r$ maps $\pi(X_{n+r}(\mathbf e))$ onto
$\pi(X_n(\sigma^r(\mathbf e)))$. The identity
$F^{-r}(\mathcal E)=\mathcal E$ gives the correspondence between ends.
The proper map $f^r:W_{n+r}\to W_n$ maps each boundary component
of $W_{n+r}$ onto a boundary component of $W_n$.
On $\partial X_{n+r}(\mathbf e)\subset\overline\Omega$, we have
$\pi\circ f^r=F^r\circ\pi$. The image relation above therefore gives
\[
 f^r(\partial X_{n+r}(\mathbf e))
 \subset\pi^{-1}\bigl(\pi(X_n(\sigma^r(\mathbf e)))\bigr)
 =X_n(\sigma^r(\mathbf e)).
\]
Thus the image boundary component is
$\partial X_n(\sigma^r(\mathbf e))$.

For (4), set $X=X_{n+1}(\mathbf e)$ and $Y=X_n(\sigma(\mathbf e))$.
Every end in $X$ is noncritical. By Lemma~\ref{dyn:coverage} and
$W_{n+1}=f^{-1}(W_n)\cap\Omega$, the continuum $f(X)$ avoids
$W_n$ and contains $\sigma(\mathbf e)$, so $f(X)\subset Y$.
The same preimage identity shows that $X$ is a component of
$f^{-1}(Y)$. Since all critical points in $\Omega$ lie in $W$,
$X$ contains no critical point of $f$.
The Jordan neighborhood argument in Lemma~\ref{dyn:coverage}
now shows that $f$ is univalent near $X$ and maps $X$ onto $Y$.
\end{proof}

For each fixed $n$, the images under $\pi$ of the depth-$n$
components form a finite family of pairwise disjoint compact sets.
Thus, if a sequence of ends converges to $\mathbf e$ in $\mathcal E$,
all sufficiently late terms lie in $X_n(\mathbf e)$.

In the parabolic case, we denote the end containing $p$ by
$\mathbf e_p$ and call it the \textbf{parabolic end}. An end whose
orbit avoids $\mathbf e_p$ meets no preparabolic vertex, so it lies
in a unique piece at every depth. This includes every wandering
end. For any such end $\mathbf e$ and each $n$, some $m>n$ satisfies
\[
 \overline{P_m(\mathbf e)}\subset P_n(\mathbf e),
\]
since $X_m(\mathbf e)$ decreases to $\mathbf e\subset P_n(\mathbf e)$.
In the attracting case, the same statement holds for every end.

Every piece $P$ is nice: $f^k(\partial P)\cap P=\varnothing$
for $k\ge1$. If $P$ has depth $n$, then
$f^k(\partial P\cap\Omega)\subset\partial W_{n-k}$ for $0\le k\le n$,
and $f^{n+1}(\partial P\cap\Omega)\subset W$. In the parabolic case,
every vertex of $\partial P$ maps to $p$ under $f^n$; in particular,
no piece contains $p$.

\begin{lemma}\label{new:unmarked-closed}
For every $n\geq0$, let $P$ be a piece of depth $n+1$
whose boundary meets no
critical end. Then there is a unique piece $Q$ of depth $n$ such
that $f(\partial P)=\partial Q$. The map $f$ sends points of $P$
sufficiently close to $\partial P$ into $Q$.
Moreover, $f(\Omega\cap P)=\Omega\cap Q$, and
$\sigma(\mathbf e)\subset Q$ for every end $\mathbf e\subset P$.
\end{lemma}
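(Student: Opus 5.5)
Let $X=X_{n+1}(\mathbf e)$ be the depth-$(n+1)$ component containing $P$ and some end $\mathbf e\subset P$. The plan is to work on one boundary curve $\gamma=\partial P$ rather than on all of $X$, because $X$ may contain critical ends in its interior; the hypothesis only controls the ends meeting $\partial P$.

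\emph{Step 1: the image of the boundary.} First show that $f(\partial P)$ is the boundary of a single depth-$n$ piece. The curve $\partial P$ lies in $\partial W_{n+1}$, together with finitely many preparabolic vertices in the parabolic case. In the attracting case, $\partial P\subset\partial W_{n+1}\cap\Omega$ is a Jordan curve containing no critical point of $f$. Here I use that all critical points in $\Omega$ lie in $W\subset W_{n+1}$, while critical points outside $\Omega$ lie in critical ends, which $\partial P$ does not meet. Hence $f$ is a local homeomorphism near $\partial P$. Since $f:W_{n+1}\to W_n$ is proper, the same argument as in Lemma~\ref{dyn:puzzle}(3) shows that $f(\partial P)$ is a boundary component of $W_n$, so it equals $\partial Q$ for a unique piece $Q$ of depth $n$. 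In the parabolic case, $\partial P$ is a closed curve through vertices $z$ with $f^{n+1}(z)=p$. These vertices lie in ends meeting $\partial P$, which are noncritical by hypothesis, so $f$ is univalent near them by Lemma~\ref{dyn:coverage}. Thus $f$ is still a local homeomorphism along $\partial P$, and its image is a closed curve in $\partial W_n\cup f^{-n}(p)$ bounding a single piece $Q$. To confirm this, I would check locally that the two sides of $\partial P$, namely $P$ and $W_{n+1}$, map to the two sides of $f(\partial P)$.

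\emph{Step 2: points near $\partial P$.} Since $f$ is a local orientation-preserving homeomorphism along $\partial P$, a thin collar $A\subset P$ adjacent to $\partial P$ is mapped to one side of $\partial Q$. The side of $\partial P$ facing $W_{n+1}$ maps into $W_n$, because $W_{n+1}=f^{-1}(W_n)\cap\Omega$. Hence $A$ maps to the other side, which is $Q$. In the parabolic case, I would do this on each arc of $\partial P\cap\Omega$ and use univalence near the vertices.

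\emph{Step 3: $f(\Omega\cap P)=\Omega\cap Q$.} I would prove this with the open–closed argument from Lemma~\ref{dyn:basin-reduction}. Consider $f$ on the component of $f^{-1}(Q)\cap\Omega$ meeting $A$. The part $\Omega\cap P\setminus\overline{W_{n+1}}$ is mapped into $\Omega\setminus\overline{W_n}$. The set $f(\Omega\cap P)\cap Q$ is open, and it is closed in $\Omega\cap Q$ because $f(\partial P)=\partial Q$ is disjoint from $Q$ and $f$ is proper on $\Omega$. It is nonempty by Step 2, so it equals $\Omega\cap Q$; here $\Omega\cap Q$ is connected since $Q\setminus\mathcal E$ is connected, $\mathcal E$ being totally disconnected in the quotient. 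For the reverse inclusion, note that $f(\Omega\cap P)$ is connected, contains points of $Q$, and avoids $\partial Q\cap\Omega\subset\partial W_n$. This holds because $\Omega\cap P\cap W_{n+1}=\varnothing$, so $f(\Omega\cap P)\cap W_n=\varnothing$, and $\overline{W_n}\setminus W_n$ is hit only by $\partial W_{n+1}$. Therefore $f(\Omega\cap P)\subset Q$.

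\emph{Step 4: ends.} Let $\mathbf e\subset P$ be an end. Then $\partial\mathbf e\subset\overline{\Omega\cap P}$, so $f(\partial\mathbf e)=\partial\sigma(\mathbf e)\subset\overline Q$. Since $\sigma(\mathbf e)$ is connected and disjoint from $\partial Q\cap\Omega$, it lies in $Q$, except possibly when it passes through a vertex; this case I rule out using $\pi$ and $F$. I expect the main obstacle to be the parabolic case in Steps 1 and 2: showing that the image curve through the preparabolic vertices bounds one piece rather than several pieces meeting at a vertex. I would first try to handle it through the quotient sphere, where $F$ is a local homeomorphism near $\pi(\partial P)$.
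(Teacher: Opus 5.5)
Your attracting case is fine and uses the paper's ingredients: local univalence along $\partial P$, a collar $A\subset P$ sent into $Q$, and an open--closed argument. The parabolic case, however, has a genuine gap, exactly where you suspect it. In Step~1 you define $Q$ as the piece bounded by $f(\partial P)$. You do not show that this curve, which passes through preparabolic vertices where several depth-$n$ pieces can meet, bounds a single piece, and Step~2 then identifies ``the other side'' with this $Q$.

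The missing idea is to obtain $Q$ from the collar first. The closure of every component of $f^{-1}(\Omega)$ other than $\Omega$ lies in a critical end. So if $A$ is thin enough to miss the finitely many critical ends (possible since they are compact and disjoint from $\partial P$), then $f^{-1}(W_n)\cap A=W_{n+1}\cap A=\varnothing$. This is the second, essential use of the hypothesis on $\partial P$; you invoke it only for local univalence. Now $f(A)$ is connected, open and disjoint from $W_n$, hence from $\overline{W_n}$, so it lies in one depth-$n$ piece $Q$. Approaching $\partial P$ from $A$ and from $W_{n+1}$ gives $f(\partial P)\subset\overline Q\cap\overline{W_n}=\partial Q$. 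Local univalence makes $f(\partial P)$ relatively open in the Jordan curve $\partial Q$, and compactness gives $f(\partial P)=\partial Q$. No local analysis at the vertices and no passage to the quotient is needed.

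Step~3 has a second gap. It uses that $\Omega\cap Q$ and $\Omega\cap P$ are connected, which is not justified in the parabolic case. The hypothesis constrains only $\partial P$, and nothing prevents an end from containing two vertices of $\partial Q$ (or of $\partial P$) and running through the piece. In that case $\pi$ pinches the closed piece, its image is not a Jordan domain minus a totally disconnected set, and the intersection with $\Omega$ is disconnected. This is why the paper only uses that every component of $P\cap\Omega$ meets the collar.

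Argue componentwise instead:
\begin{itemize}
\item[(i)] For $f(\Omega\cap P)\subset Q$: a path in $\Omega$ from a point of $\Omega\cap P$ to $\infty\notin\overline P$ passes through $A$ before it first meets $\partial P$. Up to that moment its image avoids $\overline{W_n}$, so the image of the starting point lies in $Q$.
\item[(ii)] For the reverse inclusion: take a path in $\Omega$ from $y\in\Omega\cap Q$ up to its first point on $\partial Q$, reverse it, and lift it under $f:\Omega\to\Omega$ from a preimage in $\partial P\cap\Omega$. Local univalence together with $f(A)\subset Q$ forces the lift into $P$. It cannot return to $\partial P$, since its image then lies in $Q$.
\end{itemize}

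Finally, the vertex case you defer in Step~4 follows from openness of $F$. Take a saturated open neighborhood $U\subset P$ of $\mathbf e$. The open set $\pi^{-1}(F(\pi(U)))$ contains $\sigma(\mathbf e)$ and meets $\Omega$ only in $f(U\cap\Omega)\subset Q$. It therefore misses $\partial Q$, every point of which is a limit of points of $W_n$. Since $\partial\sigma(\mathbf e)=f(\partial\mathbf e)\subset\overline Q$, this gives $\sigma(\mathbf e)\subset Q$.
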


\begin{proof}
The closure of each component of $f^{-1}(\Omega)$ other than
$\Omega$ is a continuum in $\Chat\setminus\Omega$, and hence
lies in an end. This end must be critical, since a noncritical
end maps into $\Chat\setminus\Omega$ by Lemma~\ref{dyn:coverage}.
Since $\partial P$ meets no critical end, we can choose an inner
annular neighborhood $A\subset P$ of $\partial P$, with a Jordan inner boundary,
disjoint from the closures of the components of $f^{-1}(\Omega)$
other than $\Omega$. Then
\[
 f^{-1}(\Omega)\cap A=\Omega\cap A,\qquad
 f^{-1}(W_n)\cap A=W_{n+1}\cap A=\varnothing.
\]
The set $f(A)$ is open and disjoint from $W_n$, so it is also
disjoint from $\overline{W_n}$. Since $f(A)$ is connected,
it lies in a single piece $Q$ of depth $n$.

Approaching $\partial P$ from $A$ and from $W_{n+1}$ gives, by continuity,
\[
 f(\partial P)\subset\overline Q\cap\overline{W_n}=\partial Q.
\]

By the choice of $W$ and Lemma~\ref{dyn:coverage}, the map $f$
is locally univalent on $\partial P$. Since $\partial P$ and
$\partial Q$ are Jordan curves, $f(\partial P)$ is relatively open
in $\partial Q$. It is also closed by compactness, so
$f(\partial P)=\partial Q$.

Now join any point of $\Omega\cap P$ to $\infty$ by a path in
$\Omega$ and stop at its first crossing of $\partial P$.
Before the crossing, the image path avoids $\partial W_n$
and lies in $Q$ near the crossing. Hence $f(\Omega\cap P)\subset Q$.

Conversely, fix $y\in\Omega\cap Q$ and take a path from $y$ to
$\infty$ in $\Omega$. Reverse the segment up to its first intersection
with $\partial Q$; the resulting path starts on $\partial Q$ and
otherwise lies in $\Omega\cap Q$. Since $f(\partial P)=\partial Q$,
choose a preimage of its starting point in $\partial P\cap\Omega$.
Lift the path from this preimage under the proper map
$f:\Omega\to\Omega$. Local univalence on $\partial P$ and
$f(A)\subset Q$ show that the lift enters $P$.
It cannot meet $\partial P$ again, since its image lies in $Q$
after the starting point. Its endpoint therefore lies in
$\Omega\cap P$ and maps to $y$. Thus $f(\Omega\cap P)=\Omega\cap Q$.

If $\mathbf e\subset P$, this equality
and the openness of $F$ in Lemma~\ref{new:end-realization} give
$f(\partial\mathbf e)=\partial\sigma(\mathbf e)\subset Q$.
Since $Q$ is a Jordan disk, $\sigma(\mathbf e)\subset Q$.
\end{proof}

\begin{remark}
In the attracting case, $\partial P\subset\Omega$, so the condition
on $\partial P$ holds. In the parabolic case, a
regular preparabolic vertex may belong to an end containing a
critical point elsewhere. Excluding critical ends from $\partial P$
ensures that a neighborhood of $\partial P$ meets no component of
$f^{-1}(\Omega)$ other than $\Omega$. This allows us to choose
$A$ with $f(A)\cap W_n=\varnothing$, as required above.

Lemma~\ref{dyn:puzzle}(3) concerns the whole depth-$(n+1)$ component,
which may contain several piece closures in the parabolic case.
Its boundary correspondence follows from the proper map
$f:W_{n+1}\to W_n$ and requires no condition on
critical ends.
\end{remark}

Lemma~\ref{new:unmarked-closed} allows us to define $\sigma(P)$
and provides the inner annular neighborhood needed for the proper
pullbacks in Lemma~\ref{ana:exact-hull}.

\begin{definition}\label{def:puzzle-chain}
For $n\ge0$, let $P$ be a piece of depth $n+1$ whose boundary meets no
critical end. Define $\sigma(P)=Q$, where $Q$ is the unique piece
of depth $n$ with $f(\partial P)=\partial Q$, as given by
Lemma~\ref{new:unmarked-closed}.

For an integer $r\ge1$, a \textbf{(puzzle) chain} of
length $r$ is a sequence of pieces $P_{n+r},\ldots,P_n$, with
$P_j$ of depth $j$, such that $\partial P_j$ meets no critical
end and $\sigma(P_j)=P_{j-1}$ for $n<j\le n+r$.
We write $P_{n+r}\xrightarrow{r}P_n$ for this chain.
This notation does not assert $f^r(P_{n+r})=P_n$.
The \textbf{initial piece} and \textbf{terminal piece} are
$P_{n+r}$ and $P_n$, respectively. The \textbf{source pieces}
are $P_{n+r},\ldots,P_{n+1}$, each the initial piece of one step.
A \textbf{subchain} consists of one or more consecutive steps of a chain.
\end{definition}

\begin{lemma}\label{ana:chain-existence}
Let $Q$ be a piece of depth $n$, and let $\mathbf e$ be an end
such that $\sigma^r(\mathbf e)\subset Q$, where $r\ge1$.
Suppose that
\[
 \partial Q\cap\sigma^k(\mathbf c)=\varnothing
 \quad\text{for every }\mathbf c\in\mathcal E_{\mathrm{crit}}
 \text{ and }1\le k\le r.
\]
Then $P_{n+r}(\mathbf e)$ exists and
$P_{n+r}(\mathbf e)\xrightarrow{r}Q$ is a chain.
\end{lemma}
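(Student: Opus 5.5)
The plan is to build the chain one step at a time, starting at the terminal piece $Q$ and moving backwards along the orbit of $\mathbf e$. Write $\mathbf e_k=\sigma^k(\mathbf e)$ for $0\le k\le r$, so that $\mathbf e_r\subset Q$. By downward induction on $k=r-1,\dots,0$, I will construct a piece $P^{(k)}$ of depth $n+r-k$ with the following three properties:
\begin{enumerate}
\item $\mathbf e_k\subset P^{(k)}$;
\item $\partial P^{(k)}$ meets no critical end;
\item $\sigma(P^{(k)})=P^{(k+1)}$, where $P^{(r)}=Q$.
\end{enumerate}
Property (1) forces $P^{(0)}=P_{n+r}(\mathbf e)$, so this piece exists. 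Properties (2) and (3) are exactly the requirements in Definition~\ref{def:puzzle-chain}. Composing the relations $f(\partial P^{(j)})=\partial P^{(j+1)}$ from Lemma~\ref{new:unmarked-closed} also gives $f^{r-k}(\partial P^{(k)})=\partial Q$ at every stage.

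\emph{Construction of one step.} Suppose $P'=P^{(k+1)}$ has been built, with depth $m=n+r-k-1$ and $\mathbf e_{k+1}\subset P'$. Since $f(\partial\mathbf e_k)=\partial\mathbf e_{k+1}\subset P'$ and $\partial\mathbf e_k$ is connected, $\partial\mathbf e_k$ lies in a single component $U$ of $f^{-1}(P')$. The set $U$ is open and connected. It is disjoint from $W_{m+1}=f^{-1}(W_m)\cap\Omega$, and it contains no points of $\partial W_{m+1}$, because $f$ maps these points into $\overline{W_m}$. Hence $U$ lies in a single piece $P$ of depth $m+1$. This piece is a Jordan disk containing $\partial\mathbf e_k$, and $\mathbf e_k$ is full, so $\mathbf e_k\subset P$. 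Set $P^{(k)}=P$. In the attracting case $\partial P\subset\Omega$, so (2) is automatic. Lemma~\ref{new:unmarked-closed} then provides $\sigma(P)$. This is a depth-$m$ piece containing the image of points of $P$ near $\partial P$, and by the same lemma it contains $\sigma(\mathbf e_k)=\mathbf e_{k+1}$. Hence $\sigma(P)=P'$.

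\emph{The parabolic case and the hypothesis on $\partial Q$.} Here $\partial P$ can meet ends only at preparabolic vertices $v$. Suppose such a $v$ lies in a critical end $\mathbf c$. Then $v\in\partial\mathbf c$, because $v\in\overline\Omega$. Hence $f^{j}(v)\in\partial\sigma^{j}(\mathbf c)$ for all $j$, and in particular $f^{r-k}(v)\in\sigma^{r-k}(\mathbf c)$ with $1\le r-k\le r$. If I can show that $f^{r-k}(v)\in\partial Q$, this contradicts the hypothesis, which proves (2). Lemma~\ref{new:unmarked-closed} then applies and gives (3).

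\emph{Main obstacle.} The delicate point is showing that $f(v)\in\partial P'$, so that iterating along the already constructed chain gives $f^{r-k}(v)\in f^{r-k-1}(\partial P')=\partial Q$. This has to be done \emph{before} Lemma~\ref{new:unmarked-closed} is available for $P$. The reason it is nontrivial is that a vertex is where several pieces of the depth-$(m+1)$ component meet, and a priori $f$ might send a neighbourhood of $v$ in $P$ into a different piece at $f(v)$.

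I would first try to settle this with a local argument at $v$, using only the structure of the pullback. By Lemma~\ref{dyn:puzzle}(3), $f(v)$ is a depth-$m$ preparabolic vertex. Near $v$, the sets $W_{m+1}$ and $f^{-1}(W_m)$ coincide, so the sectors of $P$ at $v$ bounded by arcs of $\partial W_{m+1}$ map onto sectors at $f(v)$ bounded by arcs of $\partial W_m$. Any point of $P$ close to $\partial P\cap\Omega$ maps into a piece adjacent to the image arc. Connectedness of $P\cap\Omega$ near $\partial P$, together with the fact that $U\subset P$ maps into $P'$, should show that all these adjacent pieces equal $P'$. Then $f(v)\in\overline{P'}\cap\partial W_m=\partial P'$.

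If this local argument becomes unwieldy, the fallback is to avoid vertex combinatorics entirely. The idea is to argue by induction that $f^{r-k}$ maps $\partial P^{(k)}\cap\Omega$ into $\partial Q$, using only continuity and the identity $f(\Omega\cap P)=\Omega\cap\sigma(P)$ from Lemma~\ref{new:unmarked-closed} at the later steps, and then to pass to the vertices by taking closures.
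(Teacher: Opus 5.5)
The skeleton is sound and matches the paper's. You use backward induction along the orbit, and you obtain $P$ as the depth-$(m+1)$ piece containing the component $U$ of $f^{-1}(P')$ through $\partial\mathbf e_k$. You identify $\sigma(P)=P'$ once Lemma~\ref{new:unmarked-closed} applies, and you transport the hypothesis through $f^{r-k-1}(\partial P')=\partial Q$, so that $\partial P'\cap\sigma(\mathbf c)=\varnothing$ for every critical end $\mathbf c$. The attracting case is complete.

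In the parabolic case, however, you never prove that $\partial P$ meets no critical end, which is the real content of the lemma. Neither route you sketch works as stated.

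\emph{The local argument.} It assumes three things near a vertex $v$: that $W_{m+1}=f^{-1}(W_m)$, that $f$ carries sectors of $P$ at $v$ onto sectors at $f(v)$, and that $P\cap\Omega$ is connected near $\partial P$. These can all fail exactly when $v$ lies in a critical end $\mathbf c$, which is the case you must exclude.
\begin{itemize}
\item The point $v$ may be a critical point of $f$.
\item Components of $f^{-1}(\Omega)$ other than $\Omega$ lie in critical ends and can accumulate at $v$, so $f^{-1}(W_m)\neq W_{m+1}$ near $v$.
\item A critical end touching $\partial P$ at two vertices cuts $P\cap\Omega$ into several components. These may a priori map into different depth-$m$ pieces, so the arcs of $\partial P$ adjacent to a given $v$ need not map into $\partial P'$.
\end{itemize}
Your argument for $f(v)\in\partial P'$ never uses the hypothesis, so it would have to hold in these configurations, and there is no reason it does.

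\emph{The fallback.} It is circular. The identity $f(\Omega\cap P)=\Omega\cap\sigma(P)$ for the current piece is Lemma~\ref{new:unmarked-closed}, which presupposes your property~(2) for $P$.

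\emph{The missing idea.} Apply the hypothesis to the preimage $f^{-1}(\partial P')\cap\overline\Omega$ rather than to the unknown vertices of $\partial P$.
\begin{itemize}
\item A point of this set lying in a critical end $\mathbf c$ lies on $\partial\mathbf c$ and maps into $\partial P'\cap\sigma(\mathbf c)=\varnothing$. So the set avoids all critical ends.
\item Hence $f$ is a local homeomorphism along it, and no other component of $f^{-1}(\Omega)$ comes near it.
\item Each component of the set is therefore a Jordan curve in $\partial W_{m+1}$ that covers $\partial P'$ and bounds a piece on the side mapped into $P'$.
\item Join a point of $U\cap\Omega$ to $\infty$ by a path in $\Omega$. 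Until its first exit from $P$, its image stays in $P'$. So the exit point lies on one of these curves, approached from its $P'$-side, and that curve must be $\partial P$.
\end{itemize}
This gives $f(\partial P)=\partial P'$ and shows that $\partial P$ meets no critical end, both at once and without any analysis at a possibly critical vertex. This is how the paper proves the one-step case.
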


\begin{proof}
We first consider $r=1$. If $\mathbf e$ met $\partial W_{n+1}$,
then $\sigma(\mathbf e)$ would meet $\partial W_n$, contrary to
$\sigma(\mathbf e)\subset Q$. Thus $P=P_{n+1}(\mathbf e)$ exists.

We next show that $f(\partial P)=\partial Q$.
By the condition on $\partial Q$, the set
$f^{-1}(\partial Q)\cap\overline\Omega$ meets no critical end.
Each component of this set is a Jordan curve in $\partial W_{n+1}$,
bounds a piece, and maps onto $\partial Q$ under $f$.
Choose $z\in\Omega\cap P$ sufficiently close to
$\partial\mathbf e$ that $f(z)\in Q$, and join $z$ to $\infty$
by a path in $\Omega$. Before its first exit from $P$, the path
avoids $\partial W_{n+1}$, so its image under $f$ avoids
$\partial W_n$ and remains in $Q$. At the exit, the path meets a
component of $f^{-1}(\partial Q)\cap\overline\Omega$, which
is therefore $\partial P$. Thus $f(\partial P)=\partial Q$ and
$\partial P$ meets no critical end. By
Definition~\ref{def:puzzle-chain}, $\sigma(P)=Q$, so
$P\xrightarrow{1}Q$ is a chain.

For $r>1$, apply the one-step case with target $Q$ to
$\sigma^{r-1}(\mathbf e)$. The boundary of the resulting piece
avoids $\sigma^k(\mathbf c)$ for every critical end $\mathbf c$
and $1\le k<r$, since $f$ maps this boundary onto $\partial Q$.
Use this piece as the next target and continue backward along
the orbit of $\mathbf e$ to obtain the required chain.
\end{proof}

\begin{remark}
In the attracting case, the boundary condition holds because
$\partial Q\subset\Omega$. In the parabolic case, only critical
ends eventually mapping to $\mathbf e_p$
need to be considered. The ends in their positive forward orbits form a
finite family. By Lemma~\ref{dyn:puzzle}(2), a sufficiently deep
component about any end outside this family is disjoint from the family;
every piece whose closure lies in that component satisfies
the boundary condition for all $r\ge1$.
\end{remark}

\subsection{Proper pullbacks, degrees, and moduli}
\label{ana:physical-section}\label{dyn:real-section}

For planar sets $E\subset T\subset\mathbb C$ with $E$ contained
in a single component of $T$, we denote that component by $\Comp_E(T)$.
Recall that $\Fill(S)$ is $S$ together with all bounded components
of $\mathbb C\setminus S$. The next lemma identifies the filled
pullback along an end with the initial piece of its chain.

\begin{lemma}\label{ana:exact-hull}
Let $P\xrightarrow{r}Q$ be a chain and let
$\mathbf e\subset P$ satisfy $\sigma^r(\mathbf e)\subset Q$. Then
\[
 S=\Comp_{\partial\mathbf e}(f^{-r}(Q))
\]
is finitely connected, $f^r:S\to Q$ is proper, and $\Fill(S)=P$.
\end{lemma}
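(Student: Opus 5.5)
I would argue by induction on the length $r$ of the chain. The one-step case carries the content, and the general case follows from a composition argument.

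\emph{One step.} Let $P\xrightarrow{1}Q$ with $P$ of depth $n+1$, and let $\mathbf e\subset P$ with $\sigma(\mathbf e)\subset Q$. Set $S=\Comp_{\partial\mathbf e}(f^{-1}(Q))$. Since $Q$ is a Jordan domain, $f^{-1}(Q)$ has finitely many components, each mapped properly onto $Q$ with finite degree. Each such component is finitely connected: by Riemann--Hurwitz its Euler characteristic is bounded below by a quantity depending on $R$.

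For $\Fill(S)=P$, I first show $S\subset P$. The set $S$ is connected and meets $\partial\mathbf e\subset P$. It avoids $\partial P$, because $\partial P\cap\Omega\subset\partial W_{n+1}$ maps into $\partial W_n$, which is disjoint from $Q$. At preparabolic vertices of $\partial P$, the image is on the orbit of $p\notin Q$. Hence $S\subset P$, and since $P$ is a Jordan disk, $\Fill(S)\subset P$.

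For the reverse inclusion, I use the inner annular neighborhood $A$ of $\partial P$ from Lemma~\ref{new:unmarked-closed}, which satisfies $f(A)\subset Q$. Then $A\subset f^{-1}(Q)$. The set $A$ is connected, and I need it to lie in the same component as $\partial\mathbf e$. The path argument in Lemma~\ref{new:unmarked-closed} does this: join a point of $\Omega\cap P$ near $\partial\mathbf e$ to $\partial P$ by a path in $\Omega\cap P$. Its image stays in $\Omega\cap Q$, because it cannot reach $\partial W_n$ before the path exits $P$. So the whole path lies in $f^{-1}(Q)$, and $A$, $\partial\mathbf e$ and the path lie in one component $S$. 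Now $S$ contains an annulus whose outer boundary is $\partial P$, so the outer boundary of $S$ is $\partial P$. As $S\subset P$, the unbounded component of $\mathbb C\setminus S$ is $\Chat\setminus P$ (recall $\infty\in W\subset\Chat\setminus\overline P$). Hence $\Fill(S)=P$.

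\emph{Induction.} For $r>1$, write the chain as $P\xrightarrow{1}P'\xrightarrow{r-1}Q$. By induction, $S'=\Comp_{\partial\sigma(\mathbf e)}(f^{-(r-1)}(Q))$ is finitely connected, $f^{r-1}:S'\to Q$ is proper, and $\Fill(S')=P'$. Then $S=\Comp_{\partial\mathbf e}(f^{-1}(S'))$, and $f^r=f^{r-1}\circ f$ is proper as a composition of proper maps. Finite connectivity again follows from Riemann--Hurwitz, since the degree is finite and $Q$ is a disk.

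For the filling, $S\subset\Comp_{\partial\mathbf e}(f^{-1}(P'))$, and by the one-step case this component has filling $P$, so $\Fill(S)\subset P$. For the reverse, I must show that $S$ still reaches $\partial P$. The key point is that $S'$ contains a collar of $\partial P'$. This holds because $S'\supset\Omega\cap(\text{collar of }\partial P')$, which follows from the inductive step in the same way as above. Pulling this collar back by the one-step annulus $A$, possibly shrunk, gives an annulus in $S$ adjacent to $\partial P$, by the same path argument. It is cleaner to strengthen the induction hypothesis to the statement "$S$ contains an inner annular neighborhood of $\partial P$ and $S\subset P$", which yields $\Fill(S)=P$ directly.

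\emph{Main obstacle.} I expect the hardest part to be the parabolic case at preparabolic vertices on $\partial P$. There a collar of $\partial P$ is not contained in $\Omega$, and $f$ need not map it into $Q$ near the vertices. I would handle this with the nonmeeting of critical ends in the chain definition, as in the remark after Lemma~\ref{new:unmarked-closed}. This gives a local univalence of $f$ near $\partial P$, so $A$ maps into $Q$ even near the vertices, and the collar propagates through the induction.
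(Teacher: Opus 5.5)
Your argument is correct and is essentially the paper's. The paper applies Lemma~\ref{new:unmarked-closed} along the whole chain at once to get an inner collar $A\subset P$ of $\partial P$ with $f^r(A)\subset Q$ and $f^r(P\cap\Omega)=Q\cap\Omega$. It then uses connectivity of $\Omega\ni\infty$ to put $A\cup(P\cap\Omega)$ and $\partial\mathbf e$ in one component of $f^{-r}(Q)$, and uses $f^r(\partial P)=\partial Q$ to keep $S$ inside $P$. Your strengthened induction hypothesis (``$S\subset P$ and $S$ contains an inner collar of $\partial P$'') does the same thing one step at a time.

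Work with that strengthened form throughout, and drop the intermediate claim $S'\supset\Omega\cap(\text{collar of }\partial P')$. In the parabolic case that weaker claim does not by itself let you pull back a full annulus near preparabolic vertices. Also, near those vertices $f$ maps the collar into $Q$ because the collar avoids the other components of $f^{-1}(\Omega)$, which lie in critical ends; local univalence alone does not give this, and Lemma~\ref{new:unmarked-closed} already packages the correct reason.
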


\begin{proof}
Applying Lemma~\ref{new:unmarked-closed} along the chain gives
\[
 f^r(P\cap\Omega)=Q\cap\Omega,
 \qquad f^r(\partial P)=\partial Q,
\]
and an inner annular neighborhood $A\subset P$ of $\partial P$
with $f^r(A)\subset Q$. Since $\Omega$ is connected and contains
$\infty$, every component of $P\cap\Omega$ meets $A$.
Thus $A\cup(P\cap\Omega)$ lies in one component of $f^{-r}(Q)$.
Since $\partial\mathbf e\subset\overline{P\cap\Omega}\cap f^{-r}(Q)$,
this component is $S$.

Since $f^r(\partial P)=\partial Q$, the component $S$ cannot
cross $\partial P$. Thus $A\subset S\subset P$, which gives
$\Fill(S)=P$.
The map $f^r:S\to Q$ is proper, and $S$ is finitely connected
because $S$ is a component of $f^{-r}(Q)$ and $Q$ is a Jordan disk.
\end{proof}

In the parabolic case, inner boundary components of $S$ may contain
critical points of $f^r$ mapping to vertices of $\partial Q$.
We therefore do not assume that all boundary components of $S$
are Jordan curves. The covering
estimate in Corollary~\ref{ana:physical-product} also applies to
these domains.

For a chain $P\xrightarrow{r}Q$, a \textbf{critical-end visit}
is a step whose source piece contains a critical end.
We denote the number of such visits by
\[
 \nu(P\to Q)
 =\#\{0\le j<r:\sigma^j(P)\text{ contains a critical end}\}.
\]
Each such step is counted once, even if its source piece contains
several critical ends. The terminal piece $Q$ is not counted.

\begin{lemma}\label{ana:real-degree}
If $\nu(P\to Q)\le L$, then the component
$S$ in Lemma~\ref{ana:exact-hull} satisfies
\begin{equation*}
 \deg(f^r:S\to Q)\le R^L.
\end{equation*}
The same bound holds for each subchain
\[
 \sigma^j(P)\xrightarrow{r'}\sigma^{j+r'}(P),
 \qquad 1\le r'<r,\quad 0\le j\le r-r'.
\]
\end{lemma}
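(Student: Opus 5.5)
We factor $f^r:S\to Q$ into single steps along the chain and bound the degree of each step separately. Write $P_j=\sigma^{r-j}(P)$ for $0\le j\le r$, so $P_r=P$ and $P_0=Q$. Set $\mathbf e_j=\sigma^{r-j}(\mathbf e)$, which lies in $P_j$ by Lemma~\ref{new:unmarked-closed}. Put
\[
 S_j=\Comp_{\partial\mathbf e_j}\bigl(f^{-j}(Q)\bigr).
\]
Then $f(S_j)\subset S_{j-1}$, since $f(\partial\mathbf e_j)=\partial\mathbf e_{j-1}$ is connected and $f(S_j)$ is a connected subset of $f^{-(j-1)}(Q)$ meeting $\partial\mathbf e_{j-1}$.

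The first step is to show that $S_j$ is a component of $f^{-1}(S_{j-1})$. It lies in $f^{-1}(S_{j-1})$, and any connected subset of $f^{-1}(S_{j-1})$ containing $S_j$ lies in $f^{-j}(Q)$, hence in $S_j$. Consequently $f:S_j\to S_{j-1}$ is proper, and
\[
 \deg(f^r:S\to Q)=\prod_{j=1}^r\deg(f:S_j\to S_{j-1}).
\]
Each factor is at most $R=\deg f$. It therefore suffices to show that a factor equals $1$ whenever the source piece $P_j$ contains no critical end. There are at most $\nu(P\to Q)\le L$ steps of the other kind, which gives the bound $R^L$.

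The second step handles a noncritical step. By Lemma~\ref{ana:exact-hull} applied to the subchain $P_j\xrightarrow{j}Q$, we have $\Fill(S_j)=P_j$, so $S_j\subset P_j$. Suppose $P_j$ contains no critical end. Critical points of $f$ in $\Omega$ lie in $W$ and hence in $W_j$, which is disjoint from $P_j$. Critical points outside $\Omega$ lie in critical ends, none of which lie in $P_j$. Hence $f$ has no critical points on $\overline{P_j}$; the boundary is covered because $\partial P_j$ meets no critical end by the chain condition. The same reasoning applies to poles, using Lemma~\ref{dyn:coverage}.

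Next we want $f$ to be injective on $P_j$, or at least on $S_j$. The cleanest route is Lemma~\ref{dyn:puzzle}(4) in a piecewise form: $f$ is a local homeomorphism near $\overline{P_j}$, it maps $\partial P_j$ onto the Jordan curve $\partial P_{j-1}$, and by Lemma~\ref{new:unmarked-closed} it maps points near $\partial P_j$ into $P_{j-1}$. We will show that $f:P_j\to P_{j-1}$ is proper. The set $f^{-1}(P_{j-1})\cap P_j$ is relatively closed in $P_j$, because any point of $P_j$ mapping to $\partial P_{j-1}$ lies in $\overline{W_j}\cap P_j=\varnothing$ (points of $\Omega$), or else in an end whose image meets $\partial P_{j-1}$, which is impossible for ends inside $P_j$. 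Given properness, $f:P_j\to P_{j-1}$ is an unbranched covering between disks. Riemann--Hurwitz then gives degree one, so $f$ is injective on $S_j\subset P_j$ and $\deg(f:S_j\to S_{j-1})=1$.

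Finally, the subchain statement follows from the same product. For the subchain starting at $\sigma^j(P)$ of length $r'$, the corresponding component $\Comp(f^{-r'}(\sigma^{j+r'}(P)))$ factors in the same way, and its critical-end visits form a subset of those of the full chain.

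The main obstacle is the properness of $f:P_j\to P_{j-1}$ in the second step, specifically excluding points of $P_j$ that map outside $\overline{P_{j-1}}$, such as into $W_{j-1}$ or into another piece. I would argue by connectedness. $P_j$ is connected, and $f(P_j)$ avoids $\partial P_{j-1}$ by the closedness argument above. The image points near $\partial P_j$ lie in $P_{j-1}$. Hence $f(P_j)\subset P_{j-1}$.
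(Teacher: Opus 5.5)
Your factorization into the one-step proper maps $f:S_j\to S_{j-1}$ is the paper's argument: each factor has degree at most $R$, and a step has degree one when its source piece contains no critical end. Your first step is correct.

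The gap is in the properness step, where you show that no $z\in P_j$ has $f(z)\in\partial P_{j-1}$. You treat two cases: $z\in\Omega$, and $z$ in an end contained in $P_j$. In the parabolic case there is a third possibility. The point $z$ may lie in an end $\mathbf e'$ that meets $P_j$ without being contained in it, so $\mathbf e'$ touches $\partial P_j$ at a preparabolic vertex $v$. For instance, $\mathbf e_p$, when it is not a point, meets the depth-$0$ pieces having $p$ on their boundary. For such an end your reason fails: $f(\mathbf e')=\sigma(\mathbf e')$ contains $f(v)\in\partial P_{j-1}$, so its image does meet $\partial P_{j-1}$. Separately, in your first case $W_j$ should be $W_m$, where $m=n+j$ is the depth of $P_j$ and $n$ is the depth of $Q$; as written, the claim that such points lie in $\overline{W_j}$ is false.

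The conclusion still holds, and the repair is local. By the chain condition, $\mathbf e'$ is noncritical. As in the proof of Lemma~\ref{new:unmarked-closed}, $f^{-1}(\Omega)$ agrees with $\Omega$ near $\mathbf e'$, because the closures of the other components of $f^{-1}(\Omega)$ lie in critical ends. Now suppose $f(z)\in\overline{W_{m-1}}$. Since $f$ is open, there are points converging to $z$ whose images lie in $W_{m-1}$. These points lie in $f^{-1}(W_{m-1})\cap\Omega=W_m$, so $z\in\overline{W_m}$, which is impossible for $z\in P_j$. This one argument covers all three cases at once.

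Alternatively, follow the paper and avoid analyzing all of $P_j$. Let $U$ be the component of $f^{-1}(P_{j-1})$ containing $S_j$. Lemma~\ref{ana:exact-hull}, applied to the one-step chain $P_j\xrightarrow{1}P_{j-1}$, gives $\Fill(U)=P_j$, so $U\subset P_j$. Hence $U$ contains no critical point of $f$. Then $f:U\to P_{j-1}$ is a proper unbranched covering of a disk, hence conformal, and injectivity on $S_j\subset U$ follows. The boundary correspondence you were reproving pointwise is already contained in Lemma~\ref{ana:exact-hull}.
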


\begin{proof}
For $0\le j<r$, the set $f^j(S)$ is a component of
$f^{-(r-j)}(Q)$. Applying Lemma~\ref{ana:exact-hull} to the
remaining chain gives $f^j(S)\subset\sigma^j(P)$.
If $\sigma^j(P)$ contains no critical end, the component of
$f^{-1}(\sigma^{j+1}(P))$ containing $f^j(S)$ lies in
$\sigma^j(P)$ and has no critical point of $f$.
Its proper map under $f$ onto $\sigma^{j+1}(P)$
is an unbranched covering of a disk and hence conformal.
In particular, $f:f^j(S)\to f^{j+1}(S)$ has degree one.
At every other step the degree is at most $R$. Multiplying the
degrees gives the bound. Each subchain above has at most
$L$ critical-end visits, so the same argument applies.
\end{proof}

Throughout the paper, the degree of a chain $P\xrightarrow{r}Q$
means $\deg(f^r:S\to Q)$, where $S$ is the component of
$f^{-r}(Q)$ containing $P\cap\Omega$, as in
Lemma~\ref{ana:exact-hull}. The boundary degree of
$f^r:\partial P\to\partial Q$ can be smaller than the proper degree.

We next give a modulus estimate for filled pullbacks.
For a Jordan disk $B$ compactly contained in a simply connected
domain $Q$, write
\[
 \mathbb A(B,Q)=Q\setminus\overline B,
 \qquad \modu(B,Q)=\modu\mathbb A(B,Q).
\]

\begin{lemma}\label{ana:filled-pullback}
Let $G:S\to Q$ be a proper holomorphic map of degree $D$, where
$S$ is a bounded planar domain and $Q$ is a Jordan disk. Let
$B\Subset Q$ be a Jordan disk and $A$ a component of $G^{-1}(B)$.
Suppose that $\hat A=\Fill(A)$ is a Jordan disk, and set
$\hat S=\Fill(S)$. Then
\[
 \modu(B,Q)\le D\modu(\hat A,\hat S).
\]
\end{lemma}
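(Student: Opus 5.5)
The plan is a direct extremal-length argument: pull back an admissible metric on $\mathbb A(B,Q)$ by $G$. We normalize the modulus so that $\modu\mathbb A=\lambda(\Gamma_{\mathrm{conn}}(\mathbb A))$, where $\Gamma_{\mathrm{conn}}(\mathbb A)$ is the family of curves in the annulus joining its two boundary components and $\lambda$ is extremal length. Then
\[
 \frac{1}{\modu\mathbb A}=\inf\{\operatorname{Area}(\rho):\rho\text{ admissible for }\Gamma_{\mathrm{conn}}(\mathbb A)\}.
\]
Here admissible means $\int_\gamma\rho\,|dz|\ge1$ for all $\gamma$ in the family. The inequality $\modu(B,Q)\le D\modu(\hat A,\hat S)$ is then equivalent to the following. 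Every admissible metric $\rho$ for $\Gamma_{\mathrm{conn}}(Q\setminus\overline B)$ produces an admissible metric $\tilde\rho$ for $\Gamma_{\mathrm{conn}}(\hat S\setminus\overline{\hat A})$ with $\operatorname{Area}(\tilde\rho)\le D\operatorname{Area}(\rho)$.

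Given $\rho$, set $\tilde\rho=(\rho\circ G)\,|G'|$ on $S\cap G^{-1}(Q\setminus\overline B)\setminus\overline{\hat A}$, and $\tilde\rho=0$ elsewhere in $\hat S\setminus\overline{\hat A}$. In particular $\tilde\rho=0$ on the holes of $S$ lying in $\hat S$. Since $G:S\to Q$ is proper of degree $D$, each point of $Q$ has at most $D$ preimages. The change of variables formula therefore gives $\operatorname{Area}(\tilde\rho)\le D\operatorname{Area}(\rho)$.

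The main step is admissibility. Let $\gamma$ be a curve in $\hat S\setminus\overline{\hat A}$ joining $\partial\hat A$ to $\partial\hat S$. Its initial point lies on $\partial\hat A\subset\partial A$, where $G$ extends continuously with values in $\partial B$; this uses that $A$ is a component of $G^{-1}(B)$ and that $\partial\hat A\subset\partial A\subset S$. Let $\gamma'$ be the initial subarc of $\gamma$ up to its first hit of $\partial S$. The endpoint of $\gamma$ lies on $\partial\hat S\subset\partial S$, so such a hit exists. It may be a hole of $S$ rather than the outer boundary, and this is precisely the point where multiple connectivity enters. Since $G$ is proper, $|G|$-values along $\gamma'$ tend to $\partial Q$ at the terminal point. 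Thus $G(\gamma')$ is a path in $Q$ starting on $\partial B$ and accumulating on $\partial Q$. Hence $G(\gamma')$ contains a subpath in $Q\setminus\overline B$ joining $\partial B$ to $\partial Q$, namely the part after its last visit to $\overline B$.

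The curve $\gamma'$ may pass through other components of $G^{-1}(B)$, where $\tilde\rho$ vanishes. This is harmless, because only the final crossing subpath is used. On that subpath the points of $\gamma'$ lie in $S\cap G^{-1}(Q\setminus\overline B)$ and outside $\hat A$, so $\tilde\rho=G^*\rho$ there. Therefore
\[
 \int_\gamma\tilde\rho\ge\int_{G(\gamma')\cap(Q\setminus\overline B)}\rho\ge1.
\]
Taking the infimum over $\rho$ yields $1/\modu(\hat A,\hat S)\le D/\modu(B,Q)$, which is the claim.

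I expect the only delicate point to be the boundary behaviour used above. The first item is the continuity of $G$ up to $\partial A$ at points of $\partial\hat A$. The second is the convergence $G(z)\to\partial Q$ as $z\to\partial S$, which includes inner boundary components that need not be Jordan curves. Both follow from properness and from $\partial\hat A\subset\partial A$, so no regularity of $\partial S$ is needed. It is also convenient to reduce to rectifiable (or locally rectifiable) curves, which is standard.
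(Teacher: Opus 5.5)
Your argument is correct and is essentially the paper's proof. Both proofs truncate each crossing curve of $\hat S\setminus\overline{\hat A}$ at its first hit of $\partial S$, note that its image under $G$ then crosses $\mathbb A(B,Q)$, and lose the factor $D$ from the degree. Your metric pullback is just the proof of the Kahn--Lyubich inequality $\mathcal L(G(\Gamma))\le D\,\mathcal L(\Gamma)$ that the paper cites, and the paper uses the vertical foliation rather than the full crossing family; the one real difference is that you get the boundary behaviour directly from properness, whereas the paper first assumes $\hat S$ is a Jordan disk and $G$ extends continuously to $\overline S$, then removes this by exhausting $Q$.
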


\begin{proof}
Since $G$ is proper, $A\Subset S$. Then $\hat A\Subset\hat S$.
First suppose that $\hat S$ is a Jordan disk and that $G$ extends
continuously to $\overline S$.
Write $\textup{cl}(\hat A)$ for the closure of $\hat A$.
Let $\Gamma$ be the family of curves joining $\partial A$ to
$\partial S$ with interiors in $S\setminus\overline A$.
Let $\mathcal F$ be the vertical foliation of
$\hat S\setminus\textup{cl}(\hat A)$ obtained from the vertical
segments in a conformally equivalent flat cylinder
\cite[Example~4.1]{KL}. Orient each leaf from $\partial\hat A$
to $\partial\hat S$ and stop at its first intersection with
$\partial S$. Since $\partial\hat A\subset\partial A\subset S$,
this initial segment belongs to $\Gamma$. Thus $\mathcal F$
overflows $\Gamma$ in the sense of~\cite[\S4.2]{KL}.

Write $G(\Gamma)=\{G\circ\gamma:\gamma\in\Gamma\}$ for the family
of image curves. Every curve in $G(\Gamma)$ contains a subcurve joining
$\partial B$ to $\partial Q$ through $Q\setminus\overline B$.
Thus $G(\Gamma)$ overflows the family of curves crossing this
annulus. Write $\mathcal L$ for extremal length. Monotonicity
under overflow and~\cite[Lemma~4.3]{KL} give
\[
 \modu(B,Q)\le\mathcal L(G(\Gamma))
 \le D\mathcal L(\Gamma)
 \le D\mathcal L(\mathcal F)
 =D\modu(\hat A,\hat S).
\]

For a general proper map, exhaust $Q$ by smooth Jordan disks
$Q_j$ containing $\overline B$ and whose boundaries avoid the
critical values of $G$. Let $S_j$ be the component of $G^{-1}(Q_j)$
containing $A$. Then $S_j$ increases to $S$, and
$\deg(G:S_j\to Q_j)\le D$.
Apply the preceding argument to $G:S_j\to Q_j$ and pass to the
limit, using the fact that $\Fill(S_j)$ increases to $\hat S$ and the
continuity of modulus.
\end{proof}

\subsection{Annuli free of critical ends}

In the nest construction, we need the pullback of an annulus along
a chain to be an annulus, and the corresponding iterate of $f$ to
give an unbranched holomorphic covering onto the original annulus.
The next lemma gives a condition on critical ends that ensures this
and proves that the inner puzzle piece exists and is unique.

\begin{lemma}\label{comb:complete-pullback}
Let $P_{n+r}\xrightarrow{r}P_n$ be a chain and let
$Q\Subset P_n$ be a piece. Suppose the following
\textbf{critical-end condition} holds:
\[
 \sigma^j(\mathbf c)\subset Q
 \quad\text{whenever }1\le j\le r,\quad
 \mathbf c\in\mathcal E_{\mathrm{crit}},\quad \mathbf c\subset P_{n+j}.
\]
Then exactly one piece $P\subset P_{n+r}$ satisfies $\sigma^r(P)=Q$,
and $P\Subset P_{n+r}$. For $0\le j<r$, the annulus
$\mathbb A(\sigma^j(P),P_{n+r-j})$ contains no critical end, and
\[
 f:\mathbb A(\sigma^j(P),P_{n+r-j})
 \longrightarrow\mathbb A(\sigma^{j+1}(P),P_{n+r-j-1})
\]
is an unbranched holomorphic covering.
\end{lemma}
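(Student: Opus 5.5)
Induction on $r$ is the natural route: build the pulled-back pieces one step at a time, moving backward along the chain, and check the annulus statements at each step. Write $Q_0=Q$ and assume we already have a piece $Q_{j}$ of depth $\operatorname{depth}(Q)+j$ with $Q_j\Subset P_{n+j}$ (pieces indexed along the chain). We want a unique piece $Q_{j+1}\subset P_{n+j+1}$ with $\sigma(Q_{j+1})=Q_j$ and $Q_{j+1}\Subset P_{n+j+1}$. Then $P=Q_r$ and $\sigma^{j}(P)=Q_{r-j}$.

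\emph{Single step.} Fix $j$ and set $U=P_{n+j+1}$, $V=P_{n+j}$, $B=Q_j$, so $\sigma(U)=V$ and $B\Subset V$. Take $S=\Comp_{U\cap\Omega}(f^{-1}(V))$. By Lemma~\ref{ana:exact-hull} with $r=1$, $f:S\to V$ is proper and $\Fill(S)=U$. Consider the annulus $\mathbb A(B,V)$. It contains no critical value of the relevant kind for the following reason. Every critical point of $f$ in $U$ lies either in $W\subset W_{n+j+1}$, which is outside $U$, or in a critical end $\mathbf c\subset U$. In the second case the hypothesis gives $\sigma(\mathbf c)\subset Q_{j}$ when $\mathbf c\subset P_{n+j+1}$. (Here I interpret the condition via the index shift $j\mapsto j+1$; the point is that critical ends in a source piece land inside the corresponding $Q$-piece.) So all critical values in the relevant region lie in $B$ or in $\Omega\cap W$-type regions mapped away, and the preimage under $f|_S$ of $\mathbb A(B,V)$ is an unbranched covering of an annulus. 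Each component of this preimage is therefore an annulus covering with some degree.

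\emph{Unique inner piece.} This is where I expect the main obstacle. We must show that $f^{-1}(\mathbb A(B,V))\cap S$ has a single outer component adjacent to $\partial U$, and that its inner complementary region in $U$ is a single piece $Q_{j+1}$. I would use the inner annular neighborhood $A$ of $\partial U$ from Lemma~\ref{new:unmarked-closed}, which maps into $V$ near $\partial V$, so the component $\mathcal A$ of $f^{-1}(\mathbb A(B,V))$ meeting $A$ is an annulus with outer boundary $\partial U$. Its inner boundary is a Jordan curve $\gamma$ with $f(\gamma)=\partial B$. Because $\partial B\subset\partial W_{\text{depth}}$ away from critical ends, $\gamma\subset\partial W_{\text{depth}+1}$ bounds a piece $Q_{j+1}$ with $f(\partial Q_{j+1})=\partial B$ and no critical end on its boundary, so $\sigma(Q_{j+1})=B$. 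Also $Q_{j+1}\Subset U$ since $\mathcal A$ separates them.

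For uniqueness, any piece $P'\subset U$ with $\sigma(P')=B$ has boundary in $f^{-1}(\partial B)$. If $P'\neq Q_{j+1}$, then $\partial P'$ lies in a complementary component of $\gamma$, and the region outside $Q_{j+1}$ inside $U$ is $\overline{\mathcal A}$, which maps into $\overline{\mathbb A(B,V)}$. So $\partial P'$ must lie inside $Q_{j+1}$, giving depth-$(\cdot)$ pieces nested in one of the same depth, which is a contradiction. The critical-end-free claim for $\mathbb A(Q_{j+1},U)$ follows because a critical end there would map into $B$ by hypothesis and yet lie in $\mathcal A$, whose image avoids $B$. Hence $\mathbb A(Q_{j+1},U)=\mathcal A$, and $f$ on it is an unbranched covering onto $\mathbb A(B,V)$; we also check that it contains no critical points in $\Omega$ because $W\cap U=\varnothing$.

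Composing the steps gives the statement for every $0\le j<r$.
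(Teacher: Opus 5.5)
\paragraph{Main gap.} Your single step follows the paper's construction: take the component $\mathcal A$ of $f^{-1}(\mathbb A(B,V))$ containing an inner collar of $\partial U$, show that $f$ restricts to an unbranched annular covering on it, and let its inner boundary bound the new piece. The gap is in how the hypothesis enters the induction.

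For a critical end $\mathbf c\subset P_{n+j+1}$, the lemma only gives $\sigma^{j+1}(\mathbf c)\subset Q$, which is a condition on the \emph{terminal} image. Your step with $U=P_{n+j+1}$ needs $\sigma(\mathbf c)\subset Q_j$. The ``index shift'' is not a reading of the hypothesis. The inclusion $\sigma(\mathbf c)\subset Q_j$ implies $\sigma^{j+1}(\mathbf c)\subset Q$ along the chain $Q_j\xrightarrow{j}Q$, but the converse is exactly what has to be proved. The terminal form is also the one checked in every later application, for example Propositions~\ref{comb:long-landing} and~\ref{dyn:persistent-singleton}. As written, your argument proves the lemma only under the stronger stepwise hypothesis.

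\paragraph{How to close the induction.} Each step should also output a containment for \emph{all} ends, not only critical ones: every end $\mathbf e\subset U$ with $\sigma(\mathbf e)\subset B$ lies in $Q_{j+1}$.
\begin{itemize}
\item We have $f(\partial\mathbf e)=\partial\sigma(\mathbf e)\subset B$, while $f(\overline{\mathcal A})\subset\overline V\setminus B$. Hence $\overline{\mathcal A}$ misses $\partial\mathbf e$.
\item Since $\overline{\mathcal A}$ is connected and contains $\partial U$, it then misses $\mathbf e$ altogether.
\item By Lemma~\ref{new:unmarked-closed}, $\sigma^i(\mathbf c)\subset P_{n+j+1-i}$. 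Apply the containment at steps $1,2,\dots,j$ to the (generally noncritical) ends $\sigma^j(\mathbf c),\sigma^{j-1}(\mathbf c),\dots,\sigma(\mathbf c)$. This gives $\sigma(\mathbf c)\subset Q_j$, which closes the induction.
\end{itemize}
This is what the last paragraph of the paper's proof does.

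\paragraph{Order of the argument within a step.} The same connectivity argument is what makes $\mathcal A$ free of critical points, so it must come before Riemann--Hurwitz. Your claim that the relevant critical values lie in $B$, so that all of $f^{-1}(\mathbb A(B,V))\cap S$ is unbranched, is false. A critical end whose boundary maps into $B$ may contain interior critical points whose values lie outside $B$. Examples are poles, or critical points in components of $f^{-1}(\Omega)$ other than $\Omega$. Only $\mathcal A$ is shown to avoid them.

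\paragraph{Uniqueness step.} The inclusion $f(\overline{\mathcal A})\subset\overline{\mathbb A(B,V)}$ cannot exclude $\partial P'\subset\overline{\mathcal A}$, because this closed annulus contains $\partial B$. Argue instead as follows. A second piece $P'$ of the same depth would lie in $\mathcal A$. But $f(P'\cap\Omega)=B\cap\Omega\neq\varnothing$, while $f(\mathcal A)\cap B=\varnothing$, a contradiction.
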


\begin{proof}
First suppose $r=1$.
Since $Q\Subset P_n$, Lemma~\ref{new:unmarked-closed} gives an inner
annular neighborhood of $\partial P_{n+1}$ in $P_{n+1}$ that $f$ maps into
$\mathbb A(Q,P_n)$. Let $A\subset P_{n+1}$ be the component of
$f^{-1}(\mathbb A(Q,P_n))$ containing this neighborhood.

For any end $\mathbf e\subset P_{n+1}$ with $\sigma(\mathbf e)\subset Q$,
we have $f(\partial\mathbf e)\subset Q$, whereas
$f(\overline A)\subset\overline{\mathbb A(Q,P_n)}$.
Thus $\overline A\cap\mathbf e=\varnothing$.
By the critical-end condition, this applies to every critical end
in $P_{n+1}$. All critical points in $\Omega$ lie in $W$, so $f$ is
locally univalent on $\overline A$. Hence $A$ has Jordan boundary
curves and $f:A\to\mathbb A(Q,P_n)$ is a finite unbranched covering.
The Riemann--Hurwitz formula shows that $A$ is an annulus with
outer boundary $\partial P_{n+1}$.

Let $P$ be the disk bounded by the inner boundary of $A$. Then
$P\Subset P_{n+1}$, $A=\mathbb A(P,P_{n+1})$, and $f(\partial P)=\partial Q$.
If $Q$ has depth $m$, the argument in the proof of
Lemma~\ref{new:unmarked-closed} gives
$f^{-1}(W_m)=W_{m+1}$ near $\partial P$, and hence
$\partial P\subset\partial W_{m+1}$.
Then the disk $P$ is a piece of depth $m+1$, with $\sigma(P)=Q$.
The piece $P$ is unique: any other piece of depth $m+1$ in $P_{n+1}$
whose image under $\sigma$ is $Q$ would lie in $A$, while $f$ maps
its intersection with $\Omega$ onto $Q\cap\Omega$. This contradicts
$f(A)\subset\mathbb A(Q,P_n)$.

Note that every end
$\mathbf e\subset P_{n+1}$ with $\sigma(\mathbf e)\subset Q$ lies
in $P$, since $\overline A\cap\mathbf e=\varnothing$.
Starting with $Q$, apply the one-step construction successively
along $P_{n+j}\xrightarrow{1}P_{n+j-1}$ for $j=1,\ldots,r$.
For $2\le j\le r$, let $\mathbf c\subset P_{n+j}$ be a critical end.
Since $\sigma^j(\mathbf c)\subset Q$, the preceding containment,
applied through the first $j-1$ pullbacks, shows that
$\sigma(\mathbf c)$ lies in the inner piece obtained at step $j-1$.
Thus the one-step construction applies at step $j$.
\end{proof}

The next lemma follows from Lemmas~\ref{comb:complete-pullback}
and~\ref{ana:real-degree} and the standard modulus formula for
unbranched coverings of annuli; see~\cite{Milnor}.

\begin{lemma}
\label{ana:protected-shell}
Let $P'\xrightarrow{r}Q'$ be a chain, and let
$Q\Subset Q'$ be a piece.
Suppose the critical-end condition of
Lemma~\ref{comb:complete-pullback} holds, and let $P\Subset P'$ be
the unique piece with $\sigma^r(P)=Q$ given by that lemma. Then
\[
 f^r:\mathbb A(P,P')\longrightarrow\mathbb A(Q,Q')
\]
is an unbranched holomorphic covering. If $\nu(P'\to Q')\le L$, then
\begin{equation*}
 \modu(P,P')\ge R^{-L}\modu(Q,Q').
 \end{equation*}
\end{lemma}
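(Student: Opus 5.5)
The plan is to compose the one-step coverings from Lemma~\ref{comb:complete-pullback} and then bound the degree of the composite by Lemma~\ref{ana:real-degree}. Write the chain as $P'=P_{n+r}\xrightarrow{r}P_n=Q'$. By Lemma~\ref{comb:complete-pullback}, for each $0\le j<r$ the map
\[
 f:\mathbb A(\sigma^j(P),P_{n+r-j})\to\mathbb A(\sigma^{j+1}(P),P_{n+r-j-1})
\]
is an unbranched holomorphic covering between annuli. A composition of finite unbranched coverings is again a finite unbranched covering. Hence $f^r:\mathbb A(P,P')\to\mathbb A(Q,Q')$ is an unbranched holomorphic covering, and this gives the first assertion.

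For the modulus estimate, recall that a finite unbranched covering of degree $k$ between annuli satisfies $\modu(\text{source})=\modu(\text{target})/k$. It therefore suffices to show that the covering degree $k$ of $f^r$ on $\mathbb A(P,P')$ is at most $R^L$.

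To bound $k$, compare with the proper map of Lemma~\ref{ana:exact-hull}. Let $S=\Comp_{\partial\mathbf e}(f^{-r}(Q'))$ for some end $\mathbf e\subset P$ with $\sigma^r(\mathbf e)\subset Q$. Such an end exists because $Q$ contains ends by Lemma~\ref{new:absorbing-core}, and Lemma~\ref{new:unmarked-closed} lets us lift ends from $Q$ back into $P$. The proof of Lemma~\ref{ana:exact-hull} shows that $S$ contains an inner annular neighborhood of $\partial P'$. By Lemma~\ref{comb:complete-pullback} the annulus $\mathbb A(P,P')$ is the component of $f^{-r}(\mathbb A(Q,Q'))$ containing such a neighborhood, so $\mathbb A(P,P')\subset S$ and it is a component of $(f^r|_S)^{-1}(\mathbb A(Q,Q'))$.

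Now take a point $y\in\mathbb A(Q,Q')$. It has exactly $\deg(f^r:S\to Q')$ preimages in $S$ counted with multiplicity, and exactly $k$ of them lie in $\mathbb A(P,P')$. Hence $k\le\deg(f^r:S\to Q')\le R^L$ by Lemma~\ref{ana:real-degree}, using $\nu(P'\to Q')\le L$. It follows that $\modu(P,P')=\modu(Q,Q')/k\ge R^{-L}\modu(Q,Q')$.

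The only delicate step is identifying the annulus $\mathbb A(P,P')$ with a component of the preimage inside $S$, so that the covering degree is dominated by the proper degree of $f^r$ on $S$. This needs the containment of the inner collar of $\partial P'$ in $S$, which comes from the proof of Lemma~\ref{ana:exact-hull}. Alternatively, one can multiply degrees step by step. A step whose source piece contains no critical end has degree one on the annulus, by the argument of Lemma~\ref{ana:real-degree}. Every other step has degree at most $R$ on the annulus, since the annulus is a component of the preimage under $f$, which has degree $R$. This alternative route gives the bound $R^L$ directly.
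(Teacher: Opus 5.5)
Your proposal is correct and follows the paper, which derives this lemma in one line from Lemma~\ref{comb:complete-pullback}, Lemma~\ref{ana:real-degree}, and the modulus formula for unbranched coverings of annuli. The one loose step is producing an end $\mathbf e\subset P$ with $\sigma^r(\mathbf e)\subset Q$ (the appeal to Lemma~\ref{new:absorbing-core} only gives ends in depth components, not in every piece), but no end is needed: take $S$ to be the component of $f^{-r}(Q')$ containing $P'\cap\Omega$, as in the paper's convention, or use your stepwise degree count, which avoids $S$ altogether.
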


\section{Reduction to persistent recurrence}\label{dyn:reduction}

In this section, we prove that every wandering end which is not
persistently recurrent is a point. We begin with limit sets, then
prove the bounded degree criterion.
We keep the absorbing domain $W$ and its
puzzle fixed, and set $b=\#\mathcal E_{\mathrm{crit}}$.
Recall that $R=\deg f$.

To describe where wandering ends accumulate, we use the continuous
map $\sigma:\mathcal E\to\mathcal E$ from
Definition~\ref{def:end-dynamics} and the compact end space of
Lemma~\ref{new:end-realization}.

\begin{definition}\label{def:limit-set}
The \textbf{limit set} of a wandering end $\mathbf e$ is
\[
 \omega(\mathbf e)=\bigcap_{m\ge0}
 \overline{\{\sigma^j(\mathbf e):j\ge m\}}^{\,\mathcal E}.
\]
Here the closure is taken in the end space $\mathcal E$, with
the subspace topology from the quotient sphere $\mathbb S^2$.
Set $\mathcal C(\mathbf e)=\omega(\mathbf e)\cap\mathcal E_{\mathrm{crit}}$.
\end{definition}

\begin{remark}
The set $\omega(\mathbf e)$ is nonempty and compact.
It also satisfies $\sigma(\omega(\mathbf e))=\omega(\mathbf e)$.
Continuity gives the forward inclusion. Conversely, if
$\sigma^{r_j}(\mathbf e)\to\mathbf a$ with $r_j\to\infty$,
compactness gives $\sigma^{r_j-1}(\mathbf e)\to\mathbf b\in\omega(\mathbf e)$
along a subsequence, and $\sigma(\mathbf b)=\mathbf a$.

If $\mathbf a\in\omega(\mathbf e)$ is also wandering, then
$\omega(\mathbf a)\subset\omega(\mathbf e)$; this inclusion
may be strict. Every iterate of $\mathbf e$ is wandering and has
the same limit set as $\mathbf e$.
\end{remark}

For a wandering end $\mathbf e$ and any end $\mathbf a$,
Lemma~\ref{dyn:puzzle}(1)--(2) gives $\mathbf a\in\omega(\mathbf e)$
if and only if, for every $n$, $X_n(\mathbf a)$ contains
$\sigma^k(\mathbf e)$ for infinitely many $k$.

\begin{lemma}\label{red:no-finite-limit}
The limit set of every wandering end is infinite.
\end{lemma}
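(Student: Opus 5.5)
The plan is to argue by contradiction. Suppose $\mathbf e$ is wandering and $\omega(\mathbf e)=\{\mathbf b_1,\dots,\mathbf b_s\}$ is finite. By the Remark after Definition~\ref{def:limit-set}, $\sigma(\omega(\mathbf e))=\omega(\mathbf e)$. We will attach to each late iterate $\sigma^k(\mathbf e)$ a ``depth'' relative to the nearby limit end. We then show that this depth drops by exactly one at each step while staying bounded below, which is impossible.

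\emph{Choice of depth $n_0$.} By Lemma~\ref{dyn:puzzle}(2), choose $n_0$ so that the components $X_{n_0}(\mathbf b_i)$ are pairwise disjoint. For each $i$, the set $\sigma^{-1}(\sigma(\mathbf b_i))$ is finite, having at most $\deg(f|_\Omega)$ elements. Enlarging $n_0$, we may also assume that $X_{n_0}(\mathbf b_i)$ contains no element of $\sigma^{-1}(\sigma(\mathbf b_i))$ other than $\mathbf b_i$.

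\emph{Key equivalence.} We claim that if an end $x$ lies in $X_{n_0}(\mathbf b_i)$ and $m+1\ge n_0$, then
\[
 \sigma(x)\in X_m(\sigma(\mathbf b_i))\iff x\in X_{m+1}(\mathbf b_i).
\]
The direction ``$\Leftarrow$'' is Lemma~\ref{dyn:puzzle}(3). For ``$\Rightarrow$'', Lemma~\ref{dyn:puzzle}(3) says that $\sigma$ maps the ends of $X_{m+1}(x)$ onto the ends of $X_m(\sigma(x))=X_m(\sigma(\mathbf b_i))$. Hence $X_{m+1}(x)$ contains an end $\mathbf b'$ with $\sigma(\mathbf b')=\sigma(\mathbf b_i)$. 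Since $m+1\ge n_0$, nestedness gives $X_{m+1}(x)\subset X_{n_0}(x)=X_{n_0}(\mathbf b_i)$. The choice of $n_0$ then forces $\mathbf b'=\mathbf b_i$, so $X_{m+1}(x)=X_{m+1}(\mathbf b_i)$, which proves the claim.

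\emph{Depth function.} For an end $x\ne\mathbf b_i$ in $X_{n_0}(\mathbf b_i)$, set $D(x)=\max\{m: x\in X_m(\mathbf b_i)\}$. This is finite because $\bigcap_m X_m(\mathbf b_i)=\mathbf b_i$. Since $\omega(\mathbf e)$ is the set of accumulation points of the orbit and the images of the depth-$(n_0+1)$ components are finitely many disjoint compact sets in the end space, there is $k_0$ with the following property: for $k\ge k_0$, each $\sigma^k(\mathbf e)$ lies in $X_{n_0+1}(\mathbf b_{i(k)})$ for a unique $i(k)$. Moreover $\sigma^k(\mathbf e)\ne\mathbf b_{i(k)}$, since $\mathbf e$ is wandering and the elements of the finite invariant set $\omega(\mathbf e)$ are periodic. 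Thus $D_k=D(\sigma^k(\mathbf e))\ge n_0+1$ for all $k\ge k_0$.

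\emph{Contradiction.} By ``$\Leftarrow$'', $\sigma^{k+1}(\mathbf e)\in X_{n_0}(\sigma(\mathbf b_{i(k)}))$. Disjointness of the $X_{n_0}(\mathbf b_j)$ then gives $\mathbf b_{i(k+1)}=\sigma(\mathbf b_{i(k)})$. Applying the equivalence with $m=D_k-1\ge n_0$ and with $m=D_k$ yields $D_{k+1}=D_k-1$. So $D_k$ decreases by one at each step, contradicting $D_k\ge n_0+1$ for all $k\ge k_0$. Hence $\omega(\mathbf e)$ is infinite.

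The main obstacle is the key equivalence, specifically its ``$\Rightarrow$'' direction. It is the reason for choosing $n_0$ to isolate $\mathbf b_i$ from its $\sigma$-siblings, since $\mathbf b_i$ may be critical and $\sigma$ need not be injective near it.
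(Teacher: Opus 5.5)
Your proposal is correct and is essentially the paper's argument. The paper separates the finitely many ends of $\sigma^{-1}(\omega(\mathbf e))$ at a fixed depth and proves the same one-step claim as your ``$\Rightarrow$'' direction. It then applies this claim backward along the orbit to put $\sigma^r(\mathbf e)$ in every $\mathcal P_{m+j}$, whereas you encode the claim as a depth $D_k$ that strictly decreases. Your use of the components $X_n$ throughout also covers the parabolic case directly, which is what the paper indicates in its last line.
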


\begin{proof}
Suppose that a wandering end $\mathbf e$ has a finite limit set.
Since $\sigma(\omega(\mathbf e))=\omega(\mathbf e)$, this limit set
is a finite union of periodic end cycles.

First suppose that $\Omega$ is attracting, and set
\[
 \mathcal P_n=\bigcup_{\mathbf a\in\omega(\mathbf e)}P_n(\mathbf a).
\]
By Lemmas~\ref{new:end-realization} and~\ref{dyn:puzzle}(2),
choose $m\ge1$ so that the pieces of depth $m$ separate the
finitely many ends in $\sigma^{-1}(\omega(\mathbf e))$.

We claim that, for every $n\ge m$ and every end $\mathbf b$,
\[
 \mathbf b\subset\mathcal P_m,\quad
 \sigma(\mathbf b)\subset\mathcal P_n
 \quad\Longrightarrow\quad
 \mathbf b\subset\mathcal P_{n+1}.
\]
Choose $\mathbf a\in\omega(\mathbf e)$ with
$\sigma(\mathbf b)\subset P_n(\mathbf a)$.
By Lemma~\ref{dyn:puzzle}(3), the piece $P_{n+1}(\mathbf b)$
contains a preimage of $\mathbf a$ under $\sigma$.
Since $P_{n+1}(\mathbf b)\subset P_m(\mathbf b)$, the choice of
$m$ shows that this preimage is the end of $\omega(\mathbf e)$
contained in $P_m(\mathbf b)$. Thus $P_{n+1}(\mathbf b)$ is one
of the pieces forming $\mathcal P_{n+1}$.

By compactness of $\mathcal E$, the orbit of $\mathbf e$
eventually stays in $\mathcal P_m$.
Choose $r$ so that $\sigma^{r+j}(\mathbf e)\subset\mathcal P_m$
for all $j\ge0$. For any $j\ge1$, apply the claim backwards
along the orbit from $\sigma^{r+j}(\mathbf e)$ to
$\sigma^r(\mathbf e)$. This gives
$\sigma^r(\mathbf e)\subset\mathcal P_{m+j}$, and hence
\[
 \sigma^r(\mathbf e)\subset\bigcap_{j\ge0}\mathcal P_{m+j}
 =\bigcup_{\mathbf a\in\omega(\mathbf e)}\mathbf a.
\]
Thus $\sigma^r(\mathbf e)$ is a periodic end, contrary to the
wandering property of $\mathbf e$.

In the parabolic case, replace the pieces $P_n(\mathbf a)$ by
the components $X_n(\mathbf a)$. Lemma~\ref{dyn:puzzle}(1)--(3)
gives the same conclusion.
\end{proof}

\subsection{The bounded degree condition}

We state the bounded degree condition in terms of chains and
the number $\nu$ defined in Section~\ref{ana:physical-section};
compare~\cite[Definition~3.1]{ZengFJ}.

\begin{definition}\label{def:bounded-degree}
A wandering end $\mathbf e$ satisfies the \textbf{bounded degree
condition} if there are a fixed piece $Q$ of depth $n$,
integers $r_j\to\infty$, and a constant $L$ such that
\[
 P_{n+r_j}(\mathbf e)\xrightarrow{r_j}Q,
 \qquad \nu(P_{n+r_j}(\mathbf e)\to Q)\le L.
\]
\end{definition}

\noindent\textit{The attracting case.}
Here every end lies in a piece at every depth, and all piece
boundaries lie in $\Omega$.

\begin{lemma}\label{dyn:fixed-attracting}
Suppose that $\Omega$ is attracting. Then every wandering end
satisfying the bounded degree condition is a point.
\end{lemma}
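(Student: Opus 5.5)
Assume the bounded degree condition, so we have a piece $Q$ of depth $n$, integers $r_j\to\infty$ and chains $P_{n+r_j}(\mathbf e)\xrightarrow{r_j}Q$ with at most $L$ critical-end visits. The plan is a standard shrinking argument. We find a single inner disk $B\Subset Q$, a lower bound $\mu>0$, and a family of pieces around $\mathbf e$ whose nested annuli each have modulus at least $\mu$. Since $\mathbf e$ is wandering, its forward orbit has infinitely many distinct ends in the compact end space. Passing to a subsequence, we may assume $\sigma^{r_j}(\mathbf e)\to\mathbf a\in\mathcal E$ with $\mathbf a\subset Q$. In the attracting case $P_m(\mathbf a)$ exists for all $m$, and some $m>n$ satisfies $\overline{P_m(\mathbf a)}\subset Q$. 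Set $B=P_m(\mathbf a)$ and $\mu=\modu(B,Q)>0$. After discarding finitely many $j$, we have $\sigma^{r_j}(\mathbf e)\subset B$.

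\emph{Pulling back with bounded degree.} Fix $j$ and let $S_j=\Comp_{\partial\mathbf e}(f^{-r_j}(Q))$. By Lemma~\ref{ana:exact-hull}, $f^{r_j}:S_j\to Q$ is proper and $\Fill(S_j)=P_{n+r_j}(\mathbf e)$. By Lemma~\ref{ana:real-degree}, its degree is at most $R^L$. Let $A_j$ be the component of $f^{-r_j}(B)$ containing $\partial\mathbf e$. Next I would check that $\Fill(A_j)$ is a Jordan disk, namely the piece $P_{m+r_j}(\mathbf e)$. Since all boundaries lie in $\Omega$ in the attracting case, the boundary condition of Lemma~\ref{ana:chain-existence} holds, so $P_{m+r_j}(\mathbf e)\xrightarrow{r_j}B$ is a chain. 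Lemma~\ref{ana:exact-hull} applied to that chain shows that $A_j$ is the component of $f^{-r_j}(B)$ containing $\partial\mathbf e$ and that $\Fill(A_j)=P_{m+r_j}(\mathbf e)$. Lemma~\ref{ana:filled-pullback} then gives
\[
 \modu\bigl(P_{m+r_j}(\mathbf e),P_{n+r_j}(\mathbf e)\bigr)\ge R^{-L}\mu .
\]

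\emph{Summing moduli.} Pass to a subsequence with $n+r_{j+1}\ge m+r_j$. Then the annuli $\mathbb A(P_{m+r_j}(\mathbf e),P_{n+r_j}(\mathbf e))$ are nested and disjoint, and all of them surround $\mathbf e$. By Grötzsch's inequality, the modulus of $P_{n+r_1}(\mathbf e)\setminus\mathbf e$ is infinite. Since $\mathbf e$ is a full continuum, it must be a point. Equivalently, the diameters of the nested pieces tend to zero, so $\bigcap_j P_{n+r_j}(\mathbf e)=\mathbf e$ is a point.

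\emph{Main obstacle.} The delicate step is the identification of $\Fill(A_j)$ with $P_{m+r_j}(\mathbf e)$. The issue is that $A_j$ is defined as a component of a preimage, whereas the chain is defined through boundaries. I expect to handle this with Lemma~\ref{ana:exact-hull} applied to the chain ending at $B$, exactly as above. I also need to check that $A_j$ really is a component of $G^{-1}(B)$ for $G=f^{r_j}|_{S_j}$, which holds because $A_j\subset S_j$.
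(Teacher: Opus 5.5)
Your proof is correct. Its core is the same as the paper's: bounded-degree proper pullbacks along the chains, Lemma~\ref{ana:filled-pullback} for a uniform lower bound on the moduli of the filled pullback annuli, and Gr\"otzsch. The two arguments differ in the choice of the target pair of disks.

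\emph{The paper's choice.} The paper takes auxiliary Jordan disks $V^-\Subset V^+\Subset Q$. Their boundaries lie in a postcritically free collar of $\partial Q$ in $\Omega$, and $\mathbb A(V^-,Q)\subset\Omega$. Thus every $\sigma^{r_j}(\mathbf e)$ lies in $V^-$, and no subsequence is needed in the target. The price is two separate arguments. First, the fillings of the pullbacks of $V^\pm$ are Jordan disks because $\partial V^\pm$ avoids postcritical points. Second, these fillings shrink into $P_k(\mathbf e)$ by uniform convergence of the iterates on $\overline W$ to the attracting point. Only then does the paper pass to a disjoint subsequence.

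\emph{Your choice.} You take a second puzzle piece $B=P_m(\mathbf a)\Subset Q$, paying with one subsequence. Lemmas~\ref{ana:chain-existence} and~\ref{ana:exact-hull} then identify the two fillings as $P_{m+r_j}(\mathbf e)$ and $P_{n+r_j}(\mathbf e)$. So Jordan-ness, compact containment and disjointness of the annuli all reduce to depth bookkeeping ($n+r_{j+1}\ge m+r_j$). This is more economical and stays entirely inside the puzzle.

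\emph{What your route depends on.} It uses $\partial Q\subset\Omega$ essentially. This is also what gives $\mathbf a\subset Q$ rather than only $\mathbf a\subset\overline Q$, a point you left implicit. In the parabolic case the ends $\sigma^{r_j}(\mathbf e)$ may accumulate on preparabolic vertices of $\partial Q$, so no piece $B\Subset Q$ need capture infinitely many of them. That is why the paper sets up the $V^\pm$ template, which it reuses with extra iterates in Lemma~\ref{dyn:fixed}.
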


\begin{proof}
Let $Q,r_j$ and $L$ be as in Definition~\ref{def:bounded-degree}.
Choose Jordan disks $V^-\Subset V^+\Subset Q$ with boundaries in
an inner annular neighborhood of $\partial Q$ in $\Omega$
containing no postcritical point of $f$, and with
$\mathbb A(V^-,Q)\subset\Omega$. Thus every end contained
in $Q$ lies in $V^-$. Set
\[
 S_j=\Comp_{\partial\mathbf e}(f^{-r_j}(V^+)),\qquad
 A_j=\Comp_{\partial\mathbf e}(f^{-r_j}(V^-)).
\]
By Lemma~\ref{ana:real-degree}, restricting the proper pullbacks
over $Q$ to $V^+$ gives $\deg(f^{r_j}:S_j\to V^+)\le R^L$.

We first show that, for every fixed depth $k$, we have
$S_j\subset P_k(\mathbf e)$ for all sufficiently large $j$.
The iterates converge uniformly on $\overline W$ to the attracting
fixed point, which lies outside $\overline{V^+}$. Hence
$f^{r_j}(\overline{W_k})\subset f^{r_j-k}(\overline W)$ avoids
$\overline{V^+}$ for large $j$. Since $f^{r_j}(S_j)=V^+$,
the domain $S_j$ is disjoint from $\overline{W_k}$.
It is connected and contains $\partial\mathbf e\subset P_k(\mathbf e)$,
so the claimed inclusion follows.

Since $\partial\mathbf e\Subset A_j\Subset S_j$, we have
\[
 \mathbf e\Subset\Fill(A_j)\Subset\Fill(S_j)\subset P_k(\mathbf e)
\]
for large $j$. The boundaries of $V^-$ and $V^+$ contain no
postcritical point of $f$, so $A_j$ and $S_j$ have finitely many
Jordan boundary curves, and
their fillings are Jordan disks. Lemma~\ref{ana:filled-pullback} gives
\[
 \modu\bigl(\Fill(A_j),\Fill(S_j)\bigr)
 \ge R^{-L}\modu(V^-,V^+)>0.
\]
As $\bigcap_n\overline{P_n(\mathbf e)}=\mathbf e$, we may pass to
a subsequence with $\Fill(S_{j+1})\Subset\Fill(A_j)$.
The resulting annuli are pairwise disjoint and have a uniform
positive lower bound for their moduli. Thus $\mathbf e$ is a
point by~\cite[Theorem~2.16]{McMullen}.
\end{proof}

\noindent\textit{The parabolic case.}
A piece boundary may contain preparabolic vertices, so
the ends $\sigma^{r_j}(\mathbf e)$ in
Definition~\ref{def:bounded-degree} need not all lie in a smaller
Jordan disk compactly contained in $Q$. We will use additional
iterates to reach a depth component disjoint from all forward
images of the piece boundary. Lemma~\ref{red:no-finite-limit}
will allow this choice.

\begin{lemma}\label{dyn:fixed}
Suppose that $\Omega$ is parabolic. Then every wandering end
satisfying the bounded degree condition is a point.
\end{lemma}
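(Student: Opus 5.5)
Following the remark before the lemma, the plan is to repeat the proof of Lemma~\ref{dyn:fixed-attracting}. The only change is that the smaller disks $V^-\Subset V^+$ must lie in a depth component of $Q$ that is disjoint from the preparabolic vertices. This needs a few extra iterates after time $r_j$.

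\emph{Choosing the target.} Let $Q,r_j,L$ be as in Definition~\ref{def:bounded-degree}, with $Q$ of depth $n$. The vertices of $\partial Q$ form a finite set, and their forward orbits end at $p\in\mathbf e_p$. The ends containing these vertices and their forward images therefore form a finite family $\mathcal F$. Pass to a subsequence so that $\sigma^{r_j}(\mathbf e)$ converges to an end $\mathbf a\in\omega(\mathbf e)$. If $\mathbf a\notin\mathcal F$, work directly with $\mathbf a$. If $\mathbf a\in\mathcal F$, then $\mathbf a$ is preperiodic, landing on the $\sigma$-fixed end $\mathbf e_p$. Since $\omega(\mathbf e)$ is infinite (Lemma~\ref{red:no-finite-limit}), there is $\mathbf b\in\omega(\mathbf e)\setminus\mathcal F$. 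I then choose a fixed depth $m$ with $X_m(\mathbf b)$ disjoint from $\mathcal F$ and from the ends meeting the finitely many orbits of critical ends into $\mathbf e_p$; the Remark after Lemma~\ref{ana:chain-existence} supplies such an $m$. I also choose a piece $Q_*$ with closure in this component, and a bounded number $s$ of further iterates. The aim is that, for infinitely many $j$, the time $r_j+s_j$ with $s_j\le s$ lands in $Q_*$, through a chain from a piece of depth $n$ inside $Q$.

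\emph{This combinatorial step is where I expect the main difficulty.} Boundedness of $s_j$ is not automatic. The orbit may linger near $\mathbf e_p$ for unbounded times before moving on to $\mathbf b$. The first thing I would try is to take $\mathbf b$ with care: pick any $\mathbf a\in\omega(\mathbf e)$ outside $\mathcal F$, and re-index the sequence by the first times after $r_j$ at which the orbit enters $X_m(\mathbf b)$, arguing by compactness that $\omega(\mathbf e)\not\subset\mathcal F$. The difficulty is that the intermediate steps then carry an unbounded number of critical-end visits. Two ways to control these:
\begin{itemize}
\item Near $\mathbf e_p$, any critical end visited lies in $\mathcal F$ or maps into $\mathbf e_p$. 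Pieces near the petal are univalent pullbacks once the depth is large, so these visits can be absorbed; I expect this to be the weak point.
\item Alternatively, shorten the chain at a time $t_j\le r_j$ where the orbit is in $X_m(\mathbf b)$, and use the subchain bound of Lemma~\ref{ana:real-degree}. Since $\nu$ over the subchain ending at depth $n+r_j-t_j$ is still at most $L$, this keeps the degree at most $R^L$.
\end{itemize}

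\emph{Modulus estimate and conclusion.} With a fixed piece $Q_*$ whose closure avoids $\mathcal F$, choose Jordan disks $V^-\Subset V^+\Subset Q_*$ as in the attracting case. Their boundaries lie in $\Omega$ away from postcritical points, and every end in $Q_*$ visited by the orbit lies in $V^-$. Define $A_j\subset S_j$ as the pullback components along $\partial\mathbf e$. By Lemma~\ref{ana:real-degree}, the degree of $S_j\to V^+$ is at most $R^L$. Because $\overline{V^+}$ is disjoint from $\overline W\cup\{p\}$, the iterates of $\overline{W_k}$ converge to $p$ and eventually avoid $\overline{V^+}$. Hence $S_j\subset P_k(\mathbf e)$ for large $j$. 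Lemma~\ref{ana:filled-pullback} gives $\modu(\Fill A_j,\Fill S_j)\ge R^{-L}\modu(V^-,V^+)$. Nesting a subsequence of these annuli and applying \cite[Theorem~2.16]{McMullen} then shows that $\mathbf e$ is a point.
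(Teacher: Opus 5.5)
Your proposal has a genuine gap, at exactly the step you flag. The easy subcase is fine. If a limit $\mathbf a$ of the ends $\sigma^{r_j}(\mathbf e)$ contains no vertex of $\partial Q$, then $\mathbf a\subset Q$, and some $X_k(\mathbf a)\Subset Q$ contains $\sigma^{r_j}(\mathbf e)$ for large $j$. Restricting the given pullbacks over $Q$ to collar disks around $X_k(\mathbf a)$ then repeats the attracting argument with degree $R^L$.

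When the returns accumulate on a vertex end, however, neither of your two suggestions bounds the degree of the extra iterates, and your modulus paragraph relies on exactly that bound.
\begin{itemize}
\item Absorbing the visits near $\mathbf e_p$ is only a heuristic. The orbit need not stay near $\mathbf e_p$ before it reaches $X_m(\mathbf b)$, and nothing established makes the pullbacks there univalent.
\item Shortening the chain at $t_j\le r_j$ gives no fixed target. The subchain ending at time $t_j$ has the varying terminal piece $P_{n+r_j-t_j}(\sigma^{t_j}(\mathbf e))$. Pulling back a fixed neighborhood of $X_m(\mathbf b)$ along the first $t_j$ steps instead gives domains that are in general larger than the pieces $\sigma^i(P_{n+r_j}(\mathbf e))$ of the given chain. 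The bound $L$ therefore says nothing about their critical-end visits.
\end{itemize}

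The missing idea is that the re-indexing you rejected actually works. The number of critical-end visits before a first entry is bounded independently of its length, by the mechanism of Lemma~\ref{comb:first-entry}, so $s_j$ never needs to be bounded. The paper proceeds as follows.
\begin{itemize}
\item By Lemma~\ref{red:no-finite-limit}, choose $\mathbf b\in\omega(\mathbf e)$ disjoint from $\overline{W_n}$, and a depth $m$ with $X_m=X_m(\mathbf b)$ disjoint from $\overline{W_n}$.
\item Choose Jordan disks $X_m\Subset V^-\Subset V^+$ with $V^+\setminus X_m\subset W_m$, $\overline{V^+}\cap\overline{W_n}=\varnothing$, and no postcritical point in $\overline{V^+}\setminus X_m$. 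Surrounding the whole depth component, rather than a piece $Q_*$ whose boundary may carry its own vertices, ensures that every end meeting $V^+$ lies in $X_m\Subset V^-$.
\item Let $s_j$ be the first positive time at which $\mathbf e_j=\sigma^{r_j}(\mathbf e)$ enters $V^+$, equivalently $X_m$, and set $U_j=\Comp_{\partial\mathbf e_j}(f^{-s_j}(V^+))$.
\item Minimality of $s_j$ makes the components $X_{m+s_j-i}(\sigma^i(\mathbf e_j))$, $0\le i<s_j$, pairwise disjoint. The collar condition puts every end of $\Fill(f^i(U_j))$ into the corresponding component.
\item Hence each critical end lies in at most one of these fillings. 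Each step whose filling contains no critical end has degree one, so $\deg(f^{s_j}:U_j\to V^+)\le R^b$ however large $s_j$ is.
\item Finally, $f^{s_j}(\partial Q)\subset\overline{W_n}$ is disjoint from $\overline{V^+}$, so $U_j\subset Q$. Composing with the given pullback over $Q$ gives proper pullbacks over $V^+$ of degree at most $R^{L+b}$, after which your modulus argument applies.
\end{itemize}
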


\begin{proof}
Let $Q,r_j$ and $L$ be as in Definition~\ref{def:bounded-degree},
and let $n$ be the depth of $Q$. Set
\[
 \mathbf e_j=\sigma^{r_j}(\mathbf e),\qquad
 S_j=\Comp_{\partial\mathbf e}(f^{-r_j}(Q)).
\]
Lemma~\ref{ana:real-degree} gives
$\deg(f^{r_j}:S_j\to Q)\le R^L$.
We will use additional iterates to send $\mathbf e_j$
into fixed Jordan disks $V^-\Subset V^+$ while keeping the
pullback degrees bounded.

We first choose these disks away from all forward images of $\partial Q$.
The set
$\overline{W_n}\setminus\Omega$ consists of finitely many
preparabolic vertices, so $\overline{W_n}$ meets only finitely many ends.
By Lemma~\ref{red:no-finite-limit}, $\omega(\mathbf e)$ is infinite.
Choose $\mathbf b\in\omega(\mathbf e)$ disjoint from $\overline{W_n}$.
By Lemma~\ref{dyn:puzzle}(2), choose $m$ so that
$X_m=X_m(\mathbf b)$ is disjoint from $\overline{W_n}$.

Choose Jordan disks $X_m\Subset V^-\Subset V^+$ such that
$V^+\setminus X_m\subset W_m\subset\Omega$,
$\overline{V^+}\cap\overline{W_n}=\varnothing$, and
$\overline{V^+}\setminus X_m$ contains no postcritical point of $f$.

We next choose the additional iterates by first visits of $\mathbf e_j$ to $V^+$.
Let $s_j$ be the least positive integer such that
$\sigma^{s_j}(\mathbf e_j)\subset V^+$, and set
\[
 U_j=\Comp_{\partial\mathbf e_j}(f^{-s_j}(V^+)).
\]
This time exists because $\mathbf b\in\omega(\mathbf e_j)=\omega(\mathbf e)$.
Since $V^+\setminus X_m\subset\Omega$, this is also the first positive visit
to $X_m$, and $\sigma^{s_j}(\mathbf e_j)\subset X_m$.

We claim that $\deg(f^{s_j}:U_j\to V^+)\le R^b$.
For each $j$, write
\[
 X_{m+s_j-i}=X_{m+s_j-i}(\sigma^i(\mathbf e_j)),
 \qquad 0\le i<s_j.
\]
For each $0\le i<s_j$, Lemma~\ref{dyn:puzzle}(3) says that
$\sigma$ maps the ends in $X_{m+s_j-i}$ onto the ends in
$X_{m+s_j-i-1}$.
The components $X_{m+s_j},\ldots,X_{m+1}$ are pairwise disjoint.
Indeed, if $X_{m+s_j-i}\cap X_{m+s_j-i'}\ne\varnothing$ for
$0\le i<i'<s_j$, then, since depth components are either
disjoint or nested,
\[
 \sigma^i(\mathbf e_j)\subset X_{m+s_j-i}\subset X_{m+s_j-i'}.
\]
Lemma~\ref{dyn:puzzle}(3) then gives
$\sigma^{i+s_j-i'}(\mathbf e_j)\subset X_m$, contrary to the
choice of $s_j$.

Since $V^+\setminus X_m$ contains no postcritical point of $f$,
its inverse components under $f^{s_j-i}$ in $f^i(U_j)$ are annular
boundary collars.
Removing these collars leaves a connected set: it is the intersection
of the nested connected compact sets obtained by removing smaller
boundary collars.
It avoids $W_{m+s_j-i}$ and contains
$\partial\sigma^i(\mathbf e_j)$, so it lies in $X_{m+s_j-i}$.
A path in $\Omega$ from a point of $f^i(U_j)\cap\Omega$ to
$\infty$ meets the collar adjoining the outer boundary of $f^i(U_j)$.
This collar is a connected subset of $f^{-(s_j-i)}(\Omega)$
and meets $\Omega$. Since $\Omega$ is a component of this inverse
image, the collar lies in $\Omega$.
Since $X_{m+s_j-i}$ is full, we obtain
\[
 \Fill(f^i(U_j))\setminus X_{m+s_j-i}\subset\Omega,
 \qquad 0\le i<s_j.
\]
Hence every end in $\Fill(f^i(U_j))$ is contained in $X_{m+s_j-i}$.
Thus each critical end is contained in at most one of the fillings
\[
 \Fill(U_j),\ \Fill(f(U_j)),\ \ldots,\ \Fill(f^{s_j-1}(U_j)).
\]

We now use the degree argument in the proof of
Lemma~\ref{ana:real-degree}. These fillings are Jordan disks
disjoint from $W$. If $\Fill(f^i(U_j))$ contains no critical end,
it contains neither a critical point nor a pole of $f$.
The map $f$ then sends the outer boundary of $f^i(U_j)$ onto
that of $f^{i+1}(U_j)$. The map
$f:\Fill(f^i(U_j))\to\Fill(f^{i+1}(U_j))$ is therefore proper
and unbranched, hence has degree one by the
Riemann--Hurwitz formula. Thus $f:f^i(U_j)\to f^{i+1}(U_j)$
also has degree one.
Each remaining map $f:f^i(U_j)\to f^{i+1}(U_j)$ has degree at most $R$.
Since $b=\#\mathcal E_{\mathrm{crit}}$, there are at most $b$
such steps, giving $\deg(f^{s_j}:U_j\to V^+)\le R^b$.

The inclusion $f^{s_j}(\partial Q)\subset\overline{W_n}$ and
the choice of $V^+$ give $U_j\cap\partial Q=\varnothing$, hence
$U_j\subset Q$ because $\partial\mathbf e_j\subset U_j\cap Q$.
Thus $\Comp_{\partial\mathbf e}(f^{-r_j}(U_j))\subset S_j$,
and $f^{r_j}$ maps this component properly onto $U_j$ with
degree at most $R^L$. Composing with $f^{s_j}:U_j\to V^+$ gives
a proper pullback over $V^+$ of degree at most $R^{L+b}$.
For each fixed depth $k$, the pullbacks over $V^+$ eventually
avoid $\overline{W_k}$, since
$f^{r_j+s_j}(\overline{W_k})\subset\overline W\subset\overline{W_n}$
for $r_j+s_j\ge k$, whereas $\overline{V^+}$ is disjoint from
$\overline{W_n}$. They are connected and contain $\partial\mathbf e$,
so they lie in $P_k(\mathbf e)$.
Since $\sigma^{r_j+s_j}(\mathbf e)\subset X_m\Subset V^-$,
the modulus argument of Lemma~\ref{dyn:fixed-attracting}, with
degree bound $R^{L+b}$, now shows that $\mathbf e$ is a point.
\end{proof}

Together, Lemmas~\ref{dyn:fixed-attracting} and~\ref{dyn:fixed}
prove the bounded degree criterion in both cases.

\subsection{First entries and accumulation of ends}

We now define first entries and estimate the degrees of the
corresponding pullbacks.

\begin{definition}\label{def:first-entry}
Let $\mathbf e$ be a wandering end and let $P$ be a puzzle piece
of depth $n$. If $\{k\ge1:\sigma^k(\mathbf e)\subset P\}$ is
nonempty, its least element is called the \textbf{first entry time}
$r_{\mathbf e}(P)$. The corresponding \textbf{first-entry piece} is
\[
 L_{\mathbf e}(P)=P_{n+r_{\mathbf e}(P)}(\mathbf e).
\]
When $\mathbf e\subset P$, $r_{\mathbf e}(P)$ is the first positive
return time to $P$.
\end{definition}

\begin{lemma}\label{comb:first-entry}
Suppose $L_{\mathbf e}(P)\xrightarrow{r}P$ is a chain,
where $r=r_{\mathbf e}(P)$. Then the pieces
$\sigma^j(L_{\mathbf e}(P))$, $0\le j<r$, are pairwise disjoint.
Thus each critical end is contained in at most one of these
pieces, and $\nu(L_{\mathbf e}(P)\to P)\le b$.
\end{lemma}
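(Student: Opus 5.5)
We argue by contradiction, using the standard first-entry nesting argument together with the nice property of pieces. Write $L=L_{\mathbf e}(P)$, of depth $n+r$, and set $P^{(j)}=\sigma^j(L)$, a piece of depth $n+r-j$, for $0\le j\le r$, so that $P^{(r)}=P$. Since the chain is defined by $\sigma$ on pieces, Lemma~\ref{new:unmarked-closed} shows that $\sigma^j(\mathbf e)\subset P^{(j)}$ for each $j$, by induction along the chain. Thus $P^{(j)}$ is the depth-$(n+r-j)$ piece containing $\sigma^j(\mathbf e)$. Pieces of different depths are either disjoint or nested, with the deeper one inside the shallower one: the sets $W_m$ increase, so a piece of depth $m'>m$ avoids $\overline{W_{m'}}\supset\overline{W_m}$ and hence lies in a single piece of depth $m$.

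Suppose $P^{(i)}\cap P^{(i')}\ne\varnothing$ for some $0\le i<i'<r$. Then $P^{(i)}$ has the larger depth, so $P^{(i)}\subset P^{(i')}$. Since $P^{(i)}$ is the depth-$(n+r-i)$ piece containing $\sigma^i(\mathbf e)$, we get $\sigma^i(\mathbf e)\subset P^{(i')}$. Set $k=i'-i\ge1$. The key step is to transport this along the remaining chain $P^{(i')}\xrightarrow{r-i'}P$: an end lying in $P^{(i')}$ is carried by $\sigma^{r-i'}$ into $P$, by Lemma~\ref{new:unmarked-closed} applied at each step. (Each step is legitimate because the boundaries of chain pieces meet no critical end.) Hence $\sigma^{i+r-i'}(\mathbf e)=\sigma^{r-k}(\mathbf e)\subset P$. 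Since $1\le r-k<r$, this contradicts the minimality of $r=r_{\mathbf e}(P)$ in Definition~\ref{def:first-entry}. If instead $i=0$, the same argument gives $\sigma^{r-i'}(\mathbf e)\subset P$ with $0<r-i'<r$, which again contradicts minimality.

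Therefore the pieces $P^{(j)}$, $0\le j<r$, are pairwise disjoint. A critical end is connected and lies in at most one piece of each depth; since these pieces are disjoint, it lies in at most one of them. Each step counted by $\nu(L\to P)$ must have its source piece containing some critical end. Distinct counted steps have disjoint source pieces and so use distinct critical ends, which gives $\nu(L\to P)\le\#\mathcal E_{\mathrm{crit}}=b$.

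The main obstacle is the transport step, which requires Lemma~\ref{new:unmarked-closed} to give $\sigma(\mathbf a)\subset\sigma(P')$ for every end $\mathbf a$ in a chain piece $P'$, not only for ends on the orbit of $\mathbf e$. Lemma~\ref{new:unmarked-closed} states exactly this, under the hypothesis that the boundary of the piece meets no critical end, and the definition of a chain guarantees that hypothesis. So the transport step should go through without further work.
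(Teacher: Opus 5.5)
Your proof is correct and is essentially the paper's argument: the paper writes the chain by depth as $P_{n+r},\ldots,P_n=P$, uses nesting to place the deeper of two intersecting source pieces (which contains the corresponding iterate of $\mathbf e$) inside the shallower one, and transports along the remaining chain via Lemma~\ref{new:unmarked-closed} to obtain an entry into $P$ at a time strictly between $0$ and $r$. Your separate treatment of $i=0$ is already covered by the general case, and the bound $\nu\le b$ follows from disjointness exactly as you describe.
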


\begin{proof}
Let $n$ be the depth of $P$, and write the chain as
$P_{n+r},\ldots,P_n=P$.
If $P_{n+i}\cap P_{n+j}\ne\varnothing$ for $1\le i<j\le r$,
then nesting gives $P_{n+j}\subset P_{n+i}$.
By Lemma~\ref{new:unmarked-closed} applied along the chain,
$\sigma^{r-j}(\mathbf e)\subset P_{n+j}\subset P_{n+i}$, and hence
$\sigma^{r-j+i}(\mathbf e)\subset\sigma^i(P_{n+i})=P$.
This contradicts the minimality of $r$, since $0<r-j+i<r$.
\end{proof}

\begin{lemma}\label{dyn:periodic-case}
Let $\mathbf e$ be a wandering end. If $\omega(\mathbf e)$ contains
a preperiodic end, or a wandering end whose limit set is
properly contained in $\omega(\mathbf e)$, then $\mathbf e$
satisfies the bounded degree condition and is a point.
\end{lemma}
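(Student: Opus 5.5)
The plan is to produce a fixed piece $Q$ of some depth $n$ and infinitely many chains $P_{n+r_j}(\mathbf e)\xrightarrow{r_j}Q$ with $\nu\le 2b$. Each such chain will be the concatenation of two first entries. Lemmas~\ref{dyn:fixed-attracting} and~\ref{dyn:fixed} then show that $\mathbf e$ is a point.

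\emph{Choice of $\mathbf a$ and $Q$.} First I reduce to one end $\mathbf a\in\omega(\mathbf e)$ whose orbit does not accumulate on a neighborhood of $\mathbf a$ itself.
\begin{itemize}
\item If $\omega(\mathbf e)$ contains a preperiodic end, then by $\sigma$-invariance and closedness of $\omega(\mathbf e)$ it contains a periodic end $\mathbf a$. Since $\omega(\mathbf e)$ is infinite by Lemma~\ref{red:no-finite-limit}, it also contains ends other than the cycle of $\mathbf a$.
\item In the second case, let $\mathbf a$ be wandering with $\omega(\mathbf a)\subsetneq\omega(\mathbf e)$, and pick $\mathbf b\in\omega(\mathbf e)\setminus\omega(\mathbf a)$. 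For large $n$, the orbit of $\mathbf a$ eventually avoids $X_n(\mathbf b)$, while the orbit of $\mathbf e$ visits it infinitely often.
\end{itemize}
In both cases I choose a deep piece $Q=P_n(\mathbf b')$ about a suitable end of $\omega(\mathbf e)$. Using Lemma~\ref{dyn:puzzle}(2) and the remark after Lemma~\ref{ana:chain-existence}, I take $Q$ disjoint from the finitely many forward images of critical ends that meet $\mathbf e_p$. This ensures that the boundary condition of Lemma~\ref{ana:chain-existence} holds for all $r$, so every first entry into $Q$, or into pieces pulled back from $Q$, is realized by a chain.

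\emph{Two successive first entries.} Fix a deep piece $Y=P_m(\mathbf a)$. Since $\mathbf a\in\omega(\mathbf e)$, the orbit of $\mathbf e$ enters $Y$ infinitely often. At each such time $t_j$, I follow $\sigma^{t_j}(\mathbf e)$ to its first entry into $Q$, at time $s_j$. I also compare with $\mathbf a$: since $\sigma^{t_j}(\mathbf e)$ and $\mathbf a$ share the piece $Y$ of depth $m$, their orbits share pieces for about $m$ steps. In the periodic case $\mathbf a$ never enters $Q$, so as $m$ grows the entry time $s_j$ is at least of order $m$.

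The concatenation I would use is the following. Let $u_j$ be the first entry of $\mathbf e$ into the pullback piece $L=L_{\sigma^{t_j}(\mathbf e)}(Q)$, in the sense of Definition~\ref{def:first-entry}. Composing the first entry into $L$ with the first entry from $L$ to $Q$ gives a chain $P_{n+r_j}(\mathbf e)\xrightarrow{r_j}Q$ with $\nu\le 2b$ by Lemma~\ref{comb:first-entry}. Since subchains preserve the count, $\nu$ is additive over the concatenation. The issue is that $L$ varies with $j$, so the first entry into $L$ must be arranged to exist and to have $r_j\to\infty$.

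\emph{Main obstacle.} The hardest step is ensuring infinitely many distinct such chains with a \emph{fixed} terminal $Q$ and uniformly bounded $\nu$. To do this I will choose $Q$ so that $\mathbf e$ visits $Q$ infinitely often, and take successive first entries of $\sigma^{k}(\mathbf e)$ into $Q$ starting from times $k$ at which $\sigma^k(\mathbf e)$ is close to $\mathbf a$. The first-entry piece $L_{\sigma^k(\mathbf e)}(Q)$ then has degree $\le R^b$. To get back to depth along the orbit of $\mathbf e$ itself, I use the first entry of $\mathbf e$ into $P_{m}(\mathbf a)$, which contributes another $\le b$ visits.

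The delicate point is compatibility: the first-entry piece of $\sigma^k(\mathbf e)$ into $Q$ must contain the terminal piece of the first entry of $\mathbf e$ into $P_m(\mathbf a)$. This should follow because the orbit of $\mathbf a$ avoids $Q$, so the first-entry piece has depth much larger than $m$ only when $m$ is small relative to the shadowing time. I expect to handle this by choosing the depth $m$ in terms of $j$, then applying Lemma~\ref{ana:chain-existence} to glue the two chains, and letting $j\to\infty$ to get $r_j\to\infty$.
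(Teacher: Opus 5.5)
Your proposal contains two constructions. The one you finally commit to has a gap at its key step. The one you set aside is already a complete proof.

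\textbf{The committed route.} In the last two paragraphs you take the first entry of $\mathbf e$ into a piece near $\mathbf a$, then the first entry into $Q$. This is the paper's route, but its key step is left as an expectation, and the compatibility you state is the reverse of what is needed.

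For the source pieces of the first part of the concatenated chain to be pairwise disjoint, the first-entry piece $Y$ of $\sigma^{t}(\mathbf e)$ into $Q$ must be \emph{contained in} the piece $P$ near $\mathbf a$. Indeed, if two of those source pieces met, nesting and the subchain to $Y$ would give an entry of $\mathbf e$ into $Y\subset P$ before the first entry time $t$.

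You ask instead that $Y$ contain $P_m(\mathbf a)$, and that is exactly what cannot happen. It would force $\overline Y$ to meet $\mathbf a$. The end and boundary correspondences along $Y\xrightarrow{s}Q$ would then put $\sigma^{s}(\mathbf a)$ in $Q$ or on $\partial Q$, contradicting the choice of $\overline Q$ disjoint from the forward orbit of $\mathbf a$. That short contradiction is the paper's claim $Y_n\subsetneq P_n$. It holds at every depth, with one chain per depth, and needs no shadowing estimate relating $m$ to $j$. The lengths tend to infinity because otherwise $\mathbf a$ would be an iterate of $\mathbf e$.

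Also, in the parabolic case $\mathbf a$ may be $\mathbf e_p$, which lies in no piece. So $P_m(\mathbf a)$ must be replaced by a depth-$m$ piece whose closure meets $\mathbf a$ and which the orbit of $\mathbf e$ visits infinitely often.

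\textbf{The middle construction.} Your middle paragraph already gives a complete and slightly simpler proof; the ``issue'' you raise there is not one.
\begin{itemize}
\item The time $u_j=r_{\mathbf e}(L)$ exists because $\sigma^{t_j}(\mathbf e)\subset L$ with $t_j\ge1$.
\item Lemma~\ref{ana:chain-existence} gives the chain $P_{n+u_j+s_j}(\mathbf e)\xrightarrow{u_j+s_j}Q$. Its piece after $u_j$ steps has depth $n+s_j$ and contains $\sigma^{u_j}(\mathbf e)$, so it equals $L$.
\item Lemma~\ref{comb:first-entry}, applied once to the first entry of $\mathbf e$ into $L$ and once to the first entry of $\sigma^{t_j}(\mathbf e)$ into $Q$, gives $\nu\le 2b$ with no nesting claim at all.
\item $u_j+s_j\ge s_j\to\infty$ once $\sigma^{t_j}(\mathbf e)\to\mathbf a$ in $\mathcal E$. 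This holds because the ends meeting $\overline Q$ form a closed subset of $\mathcal E$ and each $\sigma^k$ is continuous.
\end{itemize}
This avoids the claim $Y_n\subsetneq P_n$ entirely.

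\textbf{What both routes need.} Your second bullet does not supply it: $\overline Q$ must avoid $\sigma^k(\mathbf a)$ for \emph{all} $k\ge1$, not just eventually. If some $\sigma^k(\mathbf a)\subset Q$, then $s_j\le k$ for large $j$. Then $L$ ranges over finitely many pieces, so $u_j$ is bounded too. The fix is to choose $\mathbf b$ outside both $\omega(\mathbf a)$ and the forward orbit of $\mathbf a$, which $\sigma(\omega(\mathbf e))=\omega(\mathbf e)$ permits, and to take $Q$ deep about $\mathbf b$.
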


\begin{proof}
We choose $\mathbf a,\mathbf b\in\omega(\mathbf e)$ so that
the forward orbit of $\mathbf a$ under $\sigma$ avoids a neighborhood of
$\mathbf b$ in the topology of the end space $\mathcal E$.
To do this, first suppose that $\omega(\mathbf e)$ contains a
preperiodic end $\mathbf a$. Since $\omega(\mathbf e)$ is infinite
by Lemma~\ref{red:no-finite-limit}, choose $\mathbf b$ outside the
forward orbit of $\mathbf a$ and, in the parabolic case, outside the finite
family of positive forward images of critical ends eventually
mapping to $\mathbf e_p$.
By Lemma~\ref{dyn:puzzle}(1)--(2), choose a piece $Q$ of sufficiently
large depth $m$, visited infinitely often by the orbit of
$\mathbf e$, whose closure meets $\mathbf b$ and avoids the
forward orbit of $\mathbf a$, and whose boundary satisfies the
condition of Lemma~\ref{ana:chain-existence} for every $r\ge1$.
In the parabolic case, $\mathbf b$ may meet $\partial Q$.

Now suppose that every end in $\omega(\mathbf e)$ is wandering,
and choose $\mathbf a$ with
$\omega(\mathbf a)\subsetneq\omega(\mathbf e)$.
Since $\sigma(\omega(\mathbf e))=\omega(\mathbf e)$, we can choose
$\mathbf b$ outside both $\omega(\mathbf a)$ and the forward
orbit of $\mathbf a$.
By Lemma~\ref{dyn:puzzle}(2) and the remark after
Lemma~\ref{ana:chain-existence}, a sufficiently deep piece
$Q=P_m(\mathbf b)$ has closure disjoint from the forward orbit
of $\mathbf a$ and satisfies the same boundary condition.
Since $\mathbf b\in\omega(\mathbf e)$, the orbit of $\mathbf e$
visits $Q$ infinitely often.

For each $n\ge m$, choose a piece $P_n$ of depth $n$ whose closure
meets $\mathbf a$ and which is visited infinitely often by the
orbit of $\mathbf e$. This is possible since
$\mathbf a\in\omega(\mathbf e)$ and there are finitely many
pieces of depth $n$. Set
\[
 r_n=r_{\mathbf e}(P_n),\qquad
 \mathbf e_n=\sigma^{r_n}(\mathbf e),\qquad
 s_n=r_{\mathbf e_n}(Q),\qquad
 Y_n=L_{\mathbf e_n}(Q).
\]
Since $\mathbf e_n$ is a forward iterate of $\mathbf e$, its orbit
also visits $Q$ infinitely often, so $s_n$ exists.
By the choice of $Q$,
Lemma~\ref{ana:chain-existence} gives the full chain
\[
 P_{m+r_n+s_n}(\mathbf e)\xrightarrow{r_n+s_n}Q,
\]
whose piece after $r_n$ steps is $Y_n$.

We claim that $Y_n\subsetneq P_n$. These pieces both contain
$\mathbf e_n$ and hence are nested. If $P_n\subset Y_n$, then
$\overline{Y_n}$ meets $\mathbf a$. If $\mathbf a\subset Y_n$,
the end correspondence along the chain gives
$\sigma^{s_n}(\mathbf a)\subset Q$.
Otherwise $\mathbf a$ meets $\partial Y_n$, and the boundary
correspondence gives
$\sigma^{s_n}(\mathbf a)\cap\partial Q\ne\varnothing$.
Both conclusions contradict the choice of $\overline Q$.

We now count critical-end visits in the full chain.
Its first $r_n$ source pieces are pairwise disjoint. Indeed,
if the pieces after $i$ and $i'$ steps met, where $0\le i<i'<r_n$,
nesting and the subchain from the latter piece to $Y_n$ would give
\[
 \sigma^{i+r_n-i'}(\mathbf e)\subset Y_n\subset P_n,
\]
contrary to the choice of $r_n$ as the first entry time into $P_n$.
The last $s_n$ source pieces are pairwise disjoint by
Lemma~\ref{comb:first-entry}. Each part therefore has at most
$b$ critical-end visits, and the full chain has $\nu\le2b$.

Finally, $r_n\to\infty$. Otherwise Lemma~\ref{dyn:puzzle}(2)
would make $\mathbf a$ a forward iterate of $\mathbf e$.
Every such iterate is wandering and has limit set $\omega(\mathbf e)$,
whereas $\mathbf a$ is preperiodic or has a smaller limit set.
Thus the chains give the bounded degree condition.
Lemmas~\ref{dyn:fixed-attracting} and~\ref{dyn:fixed} show that
$\mathbf e$ is a point.
\end{proof}

Recall that
$\mathcal C(\mathbf e)=\omega(\mathbf e)\cap\mathcal E_{\mathrm{crit}}$.

\begin{lemma}\label{dyn:no-critical-case}
Let $\mathbf e$ be a wandering end. If $\mathcal C(\mathbf e)=\varnothing$,
then $\mathbf e$ satisfies the bounded degree condition and is a point.
\end{lemma}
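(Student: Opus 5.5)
Since $\mathcal C(\mathbf e)=\varnothing$ and $\omega(\mathbf e)$ is compact, I would separate it from the finitely many critical ends, then produce chains ending at one fixed piece in which critical-end visits occur only during a bounded initial stretch. By Lemma~\ref{dyn:puzzle}(2) and the remark following it, there is a depth $m$ such that the depth-$m$ components containing critical ends are disjoint from every depth-$m$ component meeting $\omega(\mathbf e)$. Call the union of the latter components $\mathcal U$. Because the depth-$m$ components are finitely many compact sets with disjoint images in $\mathbb S^2$, there is $k_0$ with $\sigma^k(\mathbf e)\subset\mathcal U$ for all $k\ge k_0$. Replacing $\mathbf e$ by $\sigma^{k_0}(\mathbf e)$ is harmless by the last paragraph of the proof of Lemma~\ref{dyn:basin-reduction}: $\sigma^{k_0}(\mathbf e)$ lies in a finite fibre of $f^{k_0}$ restricted to $\mathbf e$, so if it is a point then so is $\mathbf e$. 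After this replacement, no iterate of $\mathbf e$ lies in a depth-$m$ component containing a critical end.

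Next I fix a target piece $Q$. Using Lemma~\ref{red:no-finite-limit} and the remark after Lemma~\ref{ana:chain-existence}, I choose $\mathbf b\in\omega(\mathbf e)$ outside the finite family of forward images of critical ends that eventually map to $\mathbf e_p$; this family is empty in the attracting case. I then take $Q=P_n(\mathbf b)$ with $n\ge m$, deep enough that the boundary condition of Lemma~\ref{ana:chain-existence} holds for every $r$. The orbit of $\mathbf e$ visits $Q$ at infinitely many times $r_j\to\infty$. Lemma~\ref{ana:chain-existence} then gives chains $P_{n+r_j}(\mathbf e)\xrightarrow{r_j}Q$. The source piece at step $i$ is $P_{n+r_j-i}(\sigma^i(\mathbf e))$, whose depth is at least $n+1>m$. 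It is therefore contained in $X_m(\sigma^i(\mathbf e))$, which contains no critical end. Hence $\nu=0$ for every chain, and the bounded degree condition holds with $L=0$. Lemmas~\ref{dyn:fixed-attracting} and~\ref{dyn:fixed} then show that $\mathbf e$ is a point.

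The step that needs the most care is the separation at depth $m$ in the parabolic case. There a depth-$m$ component $X_m(\mathbf a)$ with $\mathbf a\in\omega(\mathbf e)$ may be a union of several piece closures joined at preparabolic vertices. If $\mathbf a=\mathbf e_p$ is not critical but lies near critical ends, I use the separation assertion of Lemma~\ref{dyn:puzzle}(2) applied pairwise to the finitely many critical ends. Compactness of $\omega(\mathbf e)$ then yields finitely many components covering it, all disjoint from the critical ends. I must also check that each source piece $P_{n+r_j-i}(\sigma^i(\mathbf e))$ lies in $X_m(\sigma^i(\mathbf e))$. This follows from the nesting $X_{k+1}\subset X_k$ and the fact that a piece is contained in the depth component of any end it contains.
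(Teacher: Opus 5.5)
There is a genuine gap in the parabolic case, in how you choose the target piece. Your reduction to a tail of the orbit is sound. However, you take $Q=P_n(\mathbf b)$, and $\mathbf b\in\omega(\mathbf e)$ is only required to avoid the finite family of forward images of critical ends landing on $\mathbf e_p$. The lemma does not exclude preperiodic ends from $\omega(\mathbf e)$. An end outside that family can still meet a preparabolic vertex. For instance, $\mathbf b=\mathbf e_p$ is allowed when no critical end lands on $\mathbf e_p$, and $p\in\overline W\setminus\Omega\subset\partial W_n$ for every $n$, so $P_n(\mathbf e_p)$ does not exist. The remark after Lemma~\ref{ana:chain-existence} gives the boundary condition only for pieces whose closure lies in $X_n(\mathbf b)$. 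At minimum, take $Q$ to be a piece of $X_n(\mathbf b)$ that the orbit visits infinitely often. Pigeonhole over its finitely many pieces gives one, since the orbit ends are wandering and lie in pieces.

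Better, drop $\mathbf b$ and Lemma~\ref{ana:chain-existence} altogether. After your replacement, each source piece $P_{n+r_j-i}(\sigma^i(\mathbf e))$ has closure in $X_m(\sigma^i(\mathbf e))$, which contains no critical end. So Lemma~\ref{new:unmarked-closed} already gives $\sigma(P_k(\sigma^i(\mathbf e)))=P_{k-1}(\sigma^{i+1}(\mathbf e))$, and any depth-$n$ piece visited infinitely often works as $Q$.

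This is essentially the paper's argument, except that the paper does not pass to a tail. It chooses $N$ so that each $X_N(\mathbf c)$ contains no end of the orbit other than possibly $\mathbf c$ itself. Then a critical-end visit occurs only when $\sigma^i(\mathbf e)$ is itself critical, and the wandering property gives $\nu\le b$ for $\mathbf e$ directly. Your version gets $\nu=0$ but needs a transfer back to $\mathbf e$, which you should state correctly.
\begin{itemize}
\item Work with boundaries: $f^{k_0}(\partial\mathbf e)=\partial\sigma^{k_0}(\mathbf e)$. If $\sigma^{k_0}(\mathbf e)$ is a point, the continuum $\partial\mathbf e$ lies in a finite fibre, so $\mathbf e=\Fill(\partial\mathbf e)$ is a point. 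Do not use $f^{k_0}(\mathbf e)$, which may differ from $\sigma^{k_0}(\mathbf e)$ along critical ends.
\item The lemma asserts the bounded degree condition for $\mathbf e$ itself. Prepend the first $k_0$ steps; this adds at most $k_0$ visits. Once $r_j$ is large, these source pieces have boundaries free of critical ends. This is because the finitely many wandering ends $\sigma^i(\mathbf e)$, $i<k_0$, have deep components containing no critical end other than possibly themselves.
\end{itemize}
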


\begin{proof}
Since $\mathcal C(\mathbf e)=\varnothing$, the orbit of $\mathbf e$
under $\sigma$ does not accumulate on any critical end.
By Lemma~\ref{dyn:puzzle}(1)--(2) and finiteness of the critical
ends, choose $N$ so that each $X_N(\mathbf c)$,
$\mathbf c\in\mathcal E_{\mathrm{crit}}$, contains no end of this
orbit other than possibly $\mathbf c$ itself.
Every end in this orbit is wandering and hence lies in a piece
at every depth. Thus every piece of depth at least $N$ containing
an end of this orbit has boundary disjoint from all critical ends.

Choose a fixed piece $Q$ of depth $N$ containing
$\sigma^{r_j}(\mathbf e)$ for integers $r_j\to\infty$.
Lemma~\ref{new:unmarked-closed} gives the chains
$P_{N+r_j}(\mathbf e)\xrightarrow{r_j}Q$.
A critical-end visit along such a chain can occur only when
the corresponding iterate $\sigma^i(\mathbf e)$ is itself critical.
Since the iterates of $\mathbf e$ are pairwise distinct, each
critical end contributes at most one visit, so
$\nu(P_{N+r_j}(\mathbf e)\to Q)\le b$.
Thus $\mathbf e$ satisfies the bounded degree condition, and
Lemmas~\ref{dyn:fixed-attracting} and~\ref{dyn:fixed} show that it is a point.
\end{proof}

\subsection{Successors}\label{dyn:successors}

For the remaining wandering ends, $\mathcal C(\mathbf e)\ne\varnothing$,
the limit set $\omega(\mathbf e)$ contains no preperiodic end,
and $\omega(\mathbf a)=\omega(\mathbf e)$ for every
$\mathbf a\in\omega(\mathbf e)$.

\begin{definition}\label{def:successor}
Let $\mathbf c$ be a wandering critical end and let $P$
be a piece containing $\mathbf c$.
A piece $Q$ containing
$\mathbf c$ is called a \textbf{successor} of $P$ if
$Q\xrightarrow{s}P$ is a chain for some $s\ge1$ and each critical end
is contained in at most two of the pieces
\[
 Q,\sigma(Q),\ldots,\sigma^s(Q)=P.
\]
Both endpoints are counted. In particular, $\mathbf c$ occurs
only at the endpoints.
\end{definition}

For a successor $Q\xrightarrow{s}P$, we have
$\nu(Q\to P)\le2b-1$. The component of
$f^{-s}(P)$ containing $\partial\mathbf c$ therefore maps properly
onto $P$ under $f^s$ with degree at most $R^{2b-1}$, by
Lemma~\ref{ana:real-degree}.

\begin{lemma}\label{dyn:infinite-successors}
Let $\mathbf e$ be a wandering end and let
$\mathbf c\in\mathcal C(\mathbf e)$ be wandering. Suppose that
some piece containing $\mathbf c$ has infinitely many successors
containing $\mathbf c$.
Then $\mathbf e$ satisfies the bounded degree condition and is a point.
\end{lemma}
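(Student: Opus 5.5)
Let $P_0$ be a piece of depth $n$ containing $\mathbf c$, and let $Q_1,Q_2,\ldots$ be infinitely many distinct successors of $P_0$ containing $\mathbf c$, with chains $Q_k\xrightarrow{s_k}P_0$. All $Q_k$ contain $\mathbf c$, so they are nested. There are finitely many pieces at each depth containing $\mathbf c$, hence distinct successors have depths $n+s_k\to\infty$, and thus $s_k\to\infty$. The plan is to combine each successor chain with the first entry of the orbit of $\mathbf e$ into $Q_k$. This produces chains from pieces around $\mathbf e$ to the fixed piece $P_0$ with uniformly bounded $\nu$ and lengths tending to infinity. Definition~\ref{def:bounded-degree} then holds, and Lemmas~\ref{dyn:fixed-attracting} and~\ref{dyn:fixed} give the conclusion.

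First we arrange chain existence. Since $\mathbf c$ is wandering, its orbit avoids $\mathbf e_p$ in the parabolic case. By the remark after Lemma~\ref{ana:chain-existence}, after replacing $P_0$ by a deeper piece we may assume that $\partial P_0$ avoids all $\sigma^k(\mathbf c')$, $\mathbf c'\in\mathcal E_{\mathrm{crit}}$, $k\ge1$. Successors of the original piece do not directly give successors of the deeper one, so this replacement is the step to watch. The cleaner route is to note that the hypothesis applies to the original $P_0$, and to use only that each $Q_k$ already has boundary free of critical ends, as part of a chain. We also need $\partial Q_k$ to avoid the forward images of critical ends; this holds automatically for the chain from $P_0$ when $P_0$ satisfies the condition. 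So I would first argue that the given $P_0$ can be taken with this boundary property. In the attracting case this is automatic, and in the parabolic case I would handle it by passing to a deeper piece along the chains, using Lemma~\ref{comb:complete-pullback}-style pullbacks.

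Next comes the first-entry step. Since $\mathbf c\in\omega(\mathbf e)$ and $\mathbf c\subset Q_k$, the orbit of $\mathbf e$ enters $Q_k$. Let $t_k=r_{\mathbf e}(Q_k)$, the first entry time. By Lemma~\ref{ana:chain-existence}, $L_{\mathbf e}(Q_k)\xrightarrow{t_k}Q_k$ is a chain, and Lemma~\ref{comb:first-entry} gives $\nu\le b$ along it. Concatenating this chain with $Q_k\xrightarrow{s_k}P_0$ gives
\[
P_{n+s_k+t_k}(\mathbf e)\xrightarrow{s_k+t_k}P_0 .
\]
Its source pieces consist of the first-entry pieces, which contribute at most $b$ visits, together with $Q_k,\ldots,\sigma^{s_k-1}(Q_k)$. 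By the successor definition, each critical end lies in at most two of the latter, so they contribute at most $2b-1$ visits. Hence $\nu\le 3b$. The lengths $s_k+t_k\ge s_k$ tend to infinity, and the terminal piece $P_0$ is fixed. This is exactly the bounded degree condition with $L=3b$.

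The main obstacle is the boundary hypothesis of Lemma~\ref{ana:chain-existence} for the first-entry chain into $Q_k$ in the parabolic case. The issue is whether $\partial Q_k$ can contain a forward image of a critical end that eventually maps to $\mathbf e_p$. I would argue as follows. $\partial Q_k$ maps onto $\partial P_0$ under the chain, and $\partial P_0$ avoids these finitely many ends. A forward image of a critical end lying on $\partial Q_k$ would therefore map into $\partial P_0$, or would be a preparabolic end reaching $\mathbf e_p$. I would exclude the latter by choosing $P_0$ deep enough that its closure avoids the finite family of such ends and their preimages on piece boundaries of the relevant depth.
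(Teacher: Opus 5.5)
Your overall strategy matches the paper's: concatenate the first-entry chain of $\mathbf e$ into a successor $Q_k$ with the successor chain $Q_k\xrightarrow{s_k}P_0$, bound $\nu$ by $b+(2b-1)$ (your $3b$ is harmless), and apply Lemmas~\ref{dyn:fixed-attracting} and~\ref{dyn:fixed}. The attracting case is complete as written.

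\textbf{The gap is in the parabolic case.} It sits at the step you flag yourself: the boundary hypothesis of Lemma~\ref{ana:chain-existence} for the target $Q_k$. Your fix is to replace $P_0$ by a deeper piece whose closure avoids the finite family $\mathcal F$ of positive forward images of critical ends that eventually map to $\mathbf e_p$. But the hypothesis gives infinitely many successors only for the given piece $P_0$. Nothing you cite shows that a deeper piece $P_N(\mathbf c)\subset P_0$ also has infinitely many successors. The successor chains terminate at $P_0$, and $\sigma^{s_k}(\mathbf c)$ need not lie in the deeper piece. The appeal to pullbacks in the style of Lemma~\ref{comb:complete-pullback} does not supply this. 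Your reduction, that $\partial Q_k$ avoids forward images of critical ends whenever $\partial P_0$ does, is correct. It helps only if $P_0$ already has that property, which in the parabolic case it need not.

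\textbf{The missing observation is that you never need to move $P_0$.} Since $s_k\to\infty$, the successors $Q_k$ are themselves arbitrarily deep pieces about $\mathbf c$. The end $\mathbf c$ is wandering, while every end of $\mathcal F$ is preperiodic. So Lemma~\ref{dyn:puzzle}(2) gives $N$ with $\overline{P_N(\mathbf c)}$ disjoint from $\mathcal F$. For $n+s_k\ge N$, nesting gives $\overline{Q_k}\subset\overline{P_N(\mathbf c)}$. A forward image $\sigma^j(\mathbf c')$, $j\ge1$, of a critical end can meet a piece boundary only at a preparabolic vertex. That vertex maps to $p$, which forces $\sigma^j(\mathbf c')\in\mathcal F$. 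Hence $\partial Q_k$ satisfies the boundary condition of Lemma~\ref{ana:chain-existence} for every $r\ge1$. After discarding finitely many $k$, the first-entry chain into $Q_k$ exists. This is exactly how the paper proceeds, and with this replacement the rest of your argument goes through unchanged.
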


\begin{proof}
Let $P$ be such a piece of depth $n$, and write its distinct
successor chains as $Q_{n+s_j}\xrightarrow{s_j}P$.
Since there are finitely many pieces at each depth, we may assume
$s_j\to\infty$. In the parabolic case, choose $N$ so that
$\overline{P_N(\mathbf c)}$ avoids the finite family of forward
images described after Lemma~\ref{ana:chain-existence}, and take
$n+s_j\ge N$. This is possible by Lemma~\ref{dyn:puzzle}(2),
since $\mathbf c$ is wandering and every end in that family
is preperiodic.

The first entry of $\mathbf e$ into $Q_{n+s_j}$ gives a chain
by Lemma~\ref{ana:chain-existence}. Follow this by the successor
chain $Q_{n+s_j}\xrightarrow{s_j}P$.
By Lemma~\ref{comb:first-entry} and the successor condition,
the two parts have at most $b$ and $2b-1$ critical-end visits,
respectively. Thus the full chain has $\nu\le3b-1$.

The lengths of these chains are at least $s_j$ and tend to infinity.
Since their target piece $P$ is fixed, they give the bounded
degree condition for $\mathbf e$. Lemmas~\ref{dyn:fixed-attracting}
and~\ref{dyn:fixed} prove the lemma.
\end{proof}

\subsection{Persistent recurrence}

\begin{definition}\label{def:persistent}
A wandering end $\mathbf e$ is \textbf{persistently recurrent} if
\begin{enumerate}
\item $\mathcal C(\mathbf e)\ne\varnothing$;
\item every end $\mathbf a\in\omega(\mathbf e)$ is wandering and
satisfies $\omega(\mathbf a)=\omega(\mathbf e)$;
\item for every $\mathbf c\in\mathcal C(\mathbf e)$, each piece
containing $\mathbf c$ has only finitely many successors
containing $\mathbf c$.
\end{enumerate}
\end{definition}

\begin{remark}
The finite-successor condition follows the formulation in
\cite[Lemma~C.1]{PQRTY}, which is equivalent, under their recurrence assumptions,
to the finite-children condition in \cite{KSS,QY}.
Following \cite{ZengFJ}, we use the term for wandering ends
satisfying the conditions above. The end itself need not be
critical or recurrent.
\end{remark}

\begin{corollary}\label{dyn:nonpersistent}
Every wandering end which is not persistently recurrent is a point.
\end{corollary}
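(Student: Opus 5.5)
The corollary is a case split on which clause of Definition~\ref{def:persistent} fails. Each case is then covered by one of the lemmas already proved. Let $\mathbf e$ be a wandering end that is not persistently recurrent. Then at least one of conditions (1)--(3) fails, and we treat these three failures in turn.

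If (1) fails, then $\mathcal C(\mathbf e)=\varnothing$, and Lemma~\ref{dyn:no-critical-case} shows directly that $\mathbf e$ is a point.

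If (2) fails, some $\mathbf a\in\omega(\mathbf e)$ is either not wandering or satisfies $\omega(\mathbf a)\ne\omega(\mathbf e)$. In the first subcase, $\mathbf a$ is preperiodic: an end that is not wandering has two coinciding forward images, so some iterate is periodic. In the second subcase, $\mathbf a$ is wandering and $\omega(\mathbf a)\ne\omega(\mathbf e)$. The remark after Definition~\ref{def:limit-set} gives $\omega(\mathbf a)\subset\omega(\mathbf e)$, so the inclusion is proper. Either way, the hypotheses of Lemma~\ref{dyn:periodic-case} hold, and that lemma shows $\mathbf e$ is a point.

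If (1) and (2) hold but (3) fails, then there are $\mathbf c\in\mathcal C(\mathbf e)$ and a piece containing $\mathbf c$ with infinitely many successors containing $\mathbf c$. Condition (2) applies to $\mathbf c\in\omega(\mathbf e)$, so $\mathbf c$ is wandering. Hence Definition~\ref{def:successor} applies, and the hypotheses of Lemma~\ref{dyn:infinite-successors} are met, which yields that $\mathbf e$ is a point.

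The only step needing care is the bookkeeping in case (3): Lemma~\ref{dyn:infinite-successors} requires $\mathbf c$ to be wandering. We obtain this from condition (2), which is why we treat (3) only after (2) has been assumed to hold. No genuine obstacle is expected, since all the analytic content lies in the earlier lemmas.
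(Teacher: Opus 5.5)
Your proof is correct and follows the paper's argument: the paper applies Lemmas~\ref{dyn:periodic-case}, \ref{dyn:no-critical-case}, and~\ref{dyn:infinite-successors} in turn, which corresponds to your case split on the failure of conditions (2), (1), and (3). You also make explicit one step the paper leaves implicit: (3) is treated after (2) holds, so that $\mathbf c$ is wandering and successors are defined.
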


\begin{proof}
Apply, in order, Lemmas~\ref{dyn:periodic-case},
\ref{dyn:no-critical-case}, and~\ref{dyn:infinite-successors}.
\end{proof}

Section~\ref{comb:section} treats the remaining ends by an
enhanced nest, using the covering estimate of
Section~\ref{ana:kl-section}.

\section{A covering estimate for planar pullbacks}
\label{ana:kl-section}

The previous section reduced the proof to persistently recurrent
ends. The proper pullback domains associated with these ends may
have holes. The proper map from a pullback domain need not extend
to a proper map from the filled domain onto the same target disk.
We first estimate extremal width on the actual pullback, then
obtain a modulus estimate for an annulus in the original plane.
The main tool is the Quasi-Additivity Law
of Kahn and Lyubich~\cite{KL}.

A \textbf{bordered surface} is the interior of a compact connected
Riemann surface with nonempty boundary consisting of finitely many
smooth curves. An \textbf{open archipelago} in a bordered surface $\Sigma$
is a nonempty finite union of bordered subsurfaces
compactly contained in $\Sigma$ and having pairwise disjoint
closures. These subsurfaces need not be planar. A \textbf{closed
archipelago} is a nonempty compact set $A\subset\Sigma$ such that
$\Sigma\setminus A$ is a surface of finite type~\cite[\S2.10.1]{KL}.

For a Riemann surface $\Sigma$ of finite type, possibly disconnected,
write $\partial\Sigma$ for its conformal boundary; for a bordered
surface, this is the boundary specified above.
For a nonempty compact set $A\subset\Sigma$,
let $\Gamma(\Sigma,A)$ consist of
locally rectifiable curves whose interiors lie in
$\Sigma\setminus A$ and whose endpoints lie on
$\partial A$ and $\partial\Sigma$, respectively.
Define the \textbf{extremal width} by
\[
 \mathcal{W}(\Sigma,A)=\inf_\rho\left\{\int_{\Sigma\setminus A}
 \rho^2\,d\mathrm{area}:
 \int_\gamma\rho\,ds\ge1
 \text{ for every }\gamma\in\Gamma(\Sigma,A)\right\},
\]
where $\rho$ ranges over nonnegative Borel functions on
$\Sigma\setminus A$~\cite[\S\S4.1 and 4.4]{KL}.
For a nonempty open set $A\Subset\Sigma$, we write
$\mathcal{W}(\Sigma,A)=\mathcal{W}(\Sigma,\overline A)$.
The extremal width $\mathcal{W}(\Sigma,A)$ is conformally invariant~\cite[\S4-1]{Ahlfors}.

\begin{theorem}[{\cite[p.~562 and \S2.10.1]{KL}}]
\label{ana:qa}
Let $\Sigma$ be a bordered surface.
For pairwise disjoint closed archipelagos $A_1,\ldots,A_l$ in $\Sigma$,
set
\begin{align*}
 X&=\mathcal{W}\left(\Sigma,\bigcup_iA_i\right),&
 Y&=\sum_i\mathcal{W}(\Sigma,A_i),\\
 Z&=\sum_i\mathcal{W}\left(\Sigma\setminus
                   \bigcup_{j\ne i}A_j,A_i\right),&
 \operatorname{Top}&=\operatorname{Top}_{\Sigma}\{A_i\}.
\end{align*}
Here $\operatorname{Top}_{\Sigma}\{A_i\}$ denotes the topological
complexity of the family, as defined in~\cite[\S2.10.1]{KL}.
Then there is a threshold $K$, depending only on $\operatorname{Top}$, such
that $Y\ge K$ implies $Y^2\le2XZ$.
\end{theorem}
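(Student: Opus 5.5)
This theorem is quoted from Kahn and Lyubich, so I would not reprove it from scratch. The plan is to check that the objects here match those in~\cite[p.~562 and \S2.10.1]{KL}, and then to recall the mechanism of their proof, so that it is clear nothing in it depends on $\Sigma$ being a disk. The definitions above (bordered surface, closed archipelago, the width $\mathcal W(\Sigma,A)$ via the family $\Gamma(\Sigma,A)$) were taken directly from~\cite[\S\S2.10.1, 4.1, 4.4]{KL}. So the first step is bookkeeping: confirm that $X$, $Y$, $Z$ and $\operatorname{Top}_\Sigma\{A_i\}$ are the quantities in their statement. Confirm also that conformal invariance~\cite[\S4-1]{Ahlfors} lets us replace $\Sigma$ by any conformally equivalent model.

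For the mechanism, I would proceed in three steps.

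\emph{Wide collars.} If $Y\ge K$ with $K$ large in terms of $\operatorname{Top}$, then the curve families giving most of each $\mathcal W(\Sigma,A_i)$ can be organized by the thick--thin decomposition of $\Sigma\setminus\bigcup_iA_i$. The wide parts of these families pass through a bounded number of flat annuli, and the number of relevant homotopy classes is bounded in terms of $\operatorname{Top}$. This is the only place where the topology of the family enters.

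\emph{Splitting the width of each island.} Let $\Gamma_i$ be the curves from $A_i$. Split $\Gamma_i$ into the curves that reach $\partial\Sigma$ without meeting $\bigcup_{j\ne i}A_j$, and those that first hit another island. The first part has width at most $Z_i=\mathcal W(\Sigma\setminus\bigcup_{j\ne i}A_j,A_i)$. The second part is paired with curves from the other islands, and the sum of these contributions is controlled using the wide collars from the first step.

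\emph{Cauchy--Schwarz with the extremal metric.} Test the extremal metric $\rho$ for $X$ against each family $\Gamma_i$, and combine via $\bigl(\sum_iY_i\bigr)^2\le\bigl(\sum_i Y_i^2/Z_i\bigr)\bigl(\sum_iZ_i\bigr)$, together with the estimate $\sum_iY_i^2/Z_i\lesssim X$ obtained from the collar structure. This yields $Y^2\le 2XZ$. The constant $2$ absorbs the loss caused by curves that oscillate between islands.

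I expect the main obstacle to be the first step. One must ensure that the threshold $K$ depends only on $\operatorname{Top}$ and not on the conformal geometry of $\Sigma$. Otherwise later applications would require a non-uniform threshold, for example on the Galois covers used to prove Corollary~\ref{ana:physical-product}. Rather than redo this, I would rely on~\cite[\S2.10.1]{KL}. There the law is stated for general bordered surfaces and archipelagos whose components need not be planar, and the threshold is explicitly a function of $\operatorname{Top}$ alone. For the later sections, the only remaining task is to bound $\operatorname{Top}$ in terms of the total degree $D$.
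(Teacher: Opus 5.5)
Your proposal agrees with the paper, which also does not prove this statement but quotes it from Kahn--Lyubich. Like the paper, you match $X$, $Y$, $Z$, $\operatorname{Top}_\Sigma\{A_i\}$ and the width with the quantities of~\cite[\S2.10.1]{KL}, and you correctly identify the bound on $\operatorname{Top}$ in terms of $D$ (carried out in Proposition~\ref{ana:product}) as the only downstream task; your outline of the mechanism is heuristic (the estimate $\sum_iY_i^2/Z_i\lesssim X$ is asserted, not derived), but since you explicitly rest on the citation, nothing is missing.
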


A finite branched covering admits a Galois closure~\cite[Proposition~3.1]{KL}.
The following statement also records the boundary behavior of
the Galois closure.

\begin{lemma}
\label{ana:regular-closure}
Let $G:S\to T$ be a degree-$D$ holomorphic branched covering of
bordered surfaces, extending as an unbranched covering on their
boundaries. Then there is a bordered surface $\Sigma$ and branched
coverings
\[
  q:\Sigma\to S,\qquad \pi=G\circ q:\Sigma\to T,
  \qquad n:=\deg\pi\le D!,
\]
such that $\pi$ is Galois: its deck group acts transitively on
each fiber over a regular value. The critical values of $\pi$ are
among those of $G$. The maps $q$ and $\pi$ extend as unbranched
coverings on the boundaries.
\end{lemma}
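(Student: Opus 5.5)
We construct the Galois closure by the monodromy construction and then check the boundary behavior directly.

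\emph{Monodromy setup.} Let $V\subset T$ be the finite set of critical values of $G$ in $T$, and set $T^*=T\setminus V$ and $S^*=G^{-1}(T^*)$. Then $G:S^*\to T^*$ is an unbranched covering of degree $D$. Fix a base point $t_0\in T^*$ and label its fiber $G^{-1}(t_0)=\{s_1,\dots,s_D\}$. Path lifting gives a monodromy homomorphism $\rho:\pi_1(T^*,t_0)\to\mathfrak S_D$. Its image $H$ acts transitively, since $S$ is connected. Let $H_1\subset H$ be the stabilizer of $1$.

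\emph{The Galois cover.} Take the covering $\Sigma^*\to T^*$ associated with the normal subgroup $\ker\rho$. It is connected, Galois with deck group $H\cong\pi_1(T^*)/\ker\rho$, and of degree $n=|H|\le D!$. Because $\ker\rho\subset\rho^{-1}(H_1)$, and $\rho^{-1}(H_1)$ is the subgroup corresponding to $S^*$, this covering factors through $S^*$. This gives coverings $q^*:\Sigma^*\to S^*$ and $\pi^*=G\circ q^*$. Concretely, $\Sigma^*$ can be realized as the component of the $D$-fold fiber product of $S^*$ over $T^*$, minus the diagonals, that contains $(s_1,\dots,s_D)$. Then $q^*$ is projection to the first coordinate.

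\emph{Filling the punctures.} Near each point $v\in V$, take a small punctured disk around $v$ in $T^*$. Each component of its preimage in $\Sigma^*$ is a finite cyclic covering of the punctured disk, so it is itself a punctured disk. Filling these punctures (Riemann extension) gives a Riemann surface $\Sigma$ and branched coverings $q$ and $\pi$ extending $q^*$ and $\pi^*$. The deck group extends continuously as well, so $\pi$ is Galois: its deck group acts transitively on the fibers over regular values. By construction, the critical values of $\pi$ lie in $V$, and hence among those of $G$.

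\emph{Boundary behavior.} Each boundary curve $\beta$ of $T$ has a collar neighborhood $C\cong\beta\times[0,\epsilon)$ that avoids $V$. Since $G$ extends as an unbranched covering on the boundary, each component of $G^{-1}(C)$ is a collar of a boundary curve of $S$, and the covering over it is a finite cyclic covering of annuli. The monodromy around $\beta$ is a permutation, so its restriction to $\Sigma^*$ over $C$ is again a covering of the annulus $C$ by annuli, of finite degree. Gluing in the boundary circles therefore makes $\Sigma$ the interior of a compact surface with smooth boundary curves, that is, a bordered surface. Both $\pi$ and $q$ extend to these boundary circles as finite coverings of circles, which are unbranched. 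Connectedness of $\Sigma$ holds because we took a single component, equivalently because $\Sigma^*$ is the connected covering associated with $\ker\rho$.

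The main obstacle is this boundary step: checking that $\Sigma$ is genuinely bordered, compact after adding finitely many circles, and that $q$ is unbranched on the boundary. This follows because the boundary collars contain no branch values, so everything over them is a finite covering of annuli. Finiteness of the number of boundary curves follows from $n<\infty$.
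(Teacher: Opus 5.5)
Your proof is correct and follows essentially the same route as the paper. The paper cites Kahn and Lyubich's Proposition~3.1 for the Galois closure, which is the monodromy construction you spell out; it then handles the boundary exactly as you do, by choosing a collar of $\partial T$ containing no critical value of $G$, over which $\pi$ and $q$ are finite unbranched coverings of annuli that can be completed to the boundary circles.
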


For the last assertion, choose a collar of $\partial T$ containing
no critical value of $G$. The map $\pi$ is unbranched over this
collar in $T$, and $q$ is unbranched over the preimage of the
collar under $G$ in $S$. Completing these annular coverings gives
the boundary maps of $\pi$ and $q$.

\begin{proposition}
\label{ana:product}
For each integer $D\ge1$ there is $\varepsilon(D)>0$ with the
following property.

Let $G:S\to T$ be a proper holomorphic map
of degree $D$, where $S$ is a finitely connected planar domain and
$T$ is a Jordan disk. Let $B\Subset B'\Subset T$ be Jordan disks.
Choose a component $A'$ of $G^{-1}(B')$, and set
\[
 d=\deg(G:A'\longrightarrow B'),\qquad
 A=A'\cap G^{-1}(B).
\]
The closure $\overline A$ is a closed archipelago; the components of $A$
may have holes.
Then
\begin{equation}
 0<\mathcal{W}(S,A)^{-1}<\varepsilon(D)
 \quad\Longrightarrow\quad
 \mathcal{W}(S,A)^2\le2d^2\mathcal{W}(T,B)\mathcal{W}(B',B).
 \label{ana:product-eq}
\end{equation}
\end{proposition}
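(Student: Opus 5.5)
Following Kahn and Lyubich, we pass to the Galois closure and apply the Quasi-Additivity Law there, then push the estimate down to $S$ and $T$. First reduce to the bordered setting. Shrink $T$ slightly to a smooth Jordan disk $T'\Supset B'$ whose boundary avoids the critical values of $G$. Replace $S$ by the component of $G^{-1}(T')$ containing $A'$, and pass to the limit at the end by monotonicity of width. Then $G:S\to T$ is a branched covering of bordered surfaces, unbranched on the boundary. Lemma~\ref{ana:regular-closure} gives a Galois closure $q:\Sigma\to S$, $\pi=G\circ q:\Sigma\to T$, of degree $n\le D!$, with deck group $\mathcal G$.

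\emph{Lifting widths.} Let $\tilde B=\pi^{-1}(\overline B)$, and let $\tilde A=q^{-1}(\overline A)$, a union of some of the components of $\tilde B$. The standard transfer rules for extremal width under coverings (\cite[Lemma~4.3]{KL} and its converse) give
\[
 \mathcal W(\Sigma,\tilde B)=n\,\mathcal W(T,B),\qquad
 \mathcal W(\Sigma,\tilde A)\ge \mathcal W(S,A)
\]
(in fact $\ge\deg q$ times a lower bound; monotonicity suffices here).

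\emph{Quasi-additivity.} Write the components $A_1,\dots,A_l$ of $\pi^{-1}(\overline{B'})$. Let $\tilde A_i=A_i\cap\tilde B$ be the corresponding closed archipelagos, and let $\tilde A_1$ be one lying over $A'$, so that $q^{-1}(\overline A)$ is a union of such $\tilde A_i$. Here $l\le n$. Since $\mathcal G$ acts transitively on the components over $A'$, those $\tilde A_i$ have equal widths. Each has width at least $\mathcal W(S,A)/(\text{number})$, up to the degree bookkeeping. The topological complexity depends only on $n\le D!$, which gives the threshold $K$ and hence $\varepsilon(D)$. Theorem~\ref{ana:qa} then yields $Y^2\le 2XZ$. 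The three quantities are controlled as follows:
\begin{enumerate}
\item $X=\mathcal W(\Sigma,\tilde B)=n\mathcal W(T,B)$.
\item $Y=\sum_i\mathcal W(\Sigma,\tilde A_i)$, bounded below in terms of $\mathcal W(S,A)$.
\item Each summand of $Z$ is at most $\mathcal W(A_i,\tilde A_i)$, because curves escaping $\Sigma\setminus\bigcup_{j\ne i}A_j$ from $\tilde A_i$ must cross $A_i\setminus\tilde A_i$. Since $\pi:A_i\to B'$ has degree $\deg(q|)\cdot d$, this is at most $\deg(\pi|_{A_i})\,\mathcal W(B',B)$.
\end{enumerate}

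\emph{Degree bookkeeping.} This is the main obstacle. The aim is to make the factors of $n$ and of the $q$-degrees cancel, leaving exactly $2d^2$. By the Galois property, all components over $A'$ are permuted transitively. Each $\pi|_{A_i}$ has the same degree $e=n/l'$ for the $l'$ components over $A'$. Restricting to only those components, set $Y'=l'w$, where $w$ is the common width. Then $Y'^2\le2XZ$ gives $l'^2w^2\le 2n\mathcal W(T,B)\,l'e\,\mathcal W(B',B)$, that is,
\[
 w^2\le 2\,\frac{e^2}{l'}\,\cdots
\]
up to rewriting via $n=l'e$. Relating $w$ to $\mathcal W(S,A)$ uses $q:A_i\to A'$ of degree $e/d$, so that $\mathcal W(S,A)\le (e/d)^{-1}\cdots$.

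I would work this out carefully, first trying the clean identity $\mathcal W(S,A)=\mathcal W(\Sigma,q^{-1}\overline A)/\deg q$. Applying quasi-additivity to the $l'$ archipelagos over $A'$ then yields $\mathcal W(S,A)^2\le 2d^2\mathcal W(T,B)\mathcal W(B',B)$. The threshold hypothesis $\mathcal W(S,A)^{-1}<\varepsilon(D)$ is used to ensure $Y\ge K$. Finally, let $T'\to T$.
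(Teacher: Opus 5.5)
Your overall route (Galois closure, Quasi-Additivity on $\Sigma$, then pushing down) is the paper's. Shrinking $T$ to a smooth $T'$ is an acceptable substitute for the paper's circle-domain and reflection argument, provided $T'$ contains all critical values so that the degree stays $D$. There is, however, a genuine gap in the degree bookkeeping, and that is where the whole content of the coefficient $2d^2$ lies.

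You apply Theorem~\ref{ana:qa} only to the $l'$ archipelagos lying over $A'$ and use $e=n/l'$. That identity is false. Let $m$ be the common degree of $q$ on the components of $\pi^{-1}(B')$ over $A'$. Then $e=dm$ and $l'=\deg q/m=n/(Dm)$, so $l'e=nd/D$. The relation $n=le$ holds only when $l$ counts \emph{all} components of $\pi^{-1}(B')$. Relatedly, the deck group of $\pi$ does not preserve the fibres of $q$, so it does not act on the components over $A'$; it acts transitively on all components of $\pi^{-1}(B')$. With the correct count, your restricted inequality reads
\[
 \Bigl(\frac nD\Bigr)^2\mathcal{W}(S,A)^2\le Y'^2\le2X'Z'
 \le2n\,\mathcal{W}(T,B)\cdot\frac{nd}{D}\,\mathcal{W}(B',B),
\]
that is, $\mathcal{W}(S,A)^2\le2dD\,\mathcal{W}(T,B)\mathcal{W}(B',B)$. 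If you instead use the exact value $X'=\mathcal{W}(\Sigma,q^{-1}(\overline A))=\deg q\cdot\mathcal{W}(S,A)$, you get only $\mathcal{W}(S,A)\le2d\,\mathcal{W}(B',B)$. That is weaker than the trivial bound $\mathcal{W}(S,A)\le\mathcal{W}(A',A)=d\,\mathcal{W}(B',B)$. The coefficient $dD$ is useless downstream. In Proposition~\ref{ana:amplification}, $D\le R^{ZN_0}$ grows with $Z$ while $d\le C_0$ does not, so no contradiction results.

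The repair is to apply the Quasi-Additivity Law to all components $U_1,\dots,U_l$ of $\pi^{-1}(B')$, with $A_i=U_i\cap\pi^{-1}(\overline B)$. Each $U_i$ meets a regular fibre of $\pi$ over $B'$, so the deck group permutes the $U_i$ transitively. Hence every $\pi|_{U_i}$ has degree $dm$, and $n=ldm$. Then:
\begin{itemize}
\item $X=n\,\mathcal{W}(T,B)$ exactly;
\item $Y=l\,\mathcal{W}(\Sigma,A_1)\ge lm\,\mathcal{W}(S,A)=(n/d)\,\mathcal{W}(S,A)$;
\item $Z\le\sum_i\mathcal{W}(U_i,A_i)=n\,\mathcal{W}(B',B)$.
\end{itemize}
The factors $n^2$ cancel and leave exactly $2d^2$.

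The factor $m$ in the lower bound for $\mathcal{W}(\Sigma,A_1)$ is essential; it comes from lifting curves under $q|_{U_1}$. So your remark that monotonicity suffices in the lifting paragraph is not enough. Since $n/d\ge1$, the hypothesis $\mathcal{W}(S,A)>K$ gives $Y>K$.

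Finally, the complexity bound needs an argument in terms of $D$, not just $n$. $S$ has at most $D$ boundary curves, so Riemann--Hurwitz bounds the number of critical values of $G$ by $2D-2$, and hence bounds $-\chi(\Sigma)$. Small neighbourhoods of $\bigcup_iA_i$ can be chosen with at most $n$ boundary curves.
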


\begin{proof}
By conformal invariance and the circle domain theorem, we may take
$T=\mathbb D$ and $S$ to be a circular domain.
Properness excludes point boundary components: $G$ would extend
across such a point with modulus one, contrary to the maximum
principle. Thus $S$ has finitely many analytic boundary curves.
Properness gives $|G(z)|\to1$ as $z\to\partial S$.
Near each boundary curve, $\log|G|$ is harmonic with zero boundary
values. Reflection gives a holomorphic extension of $G$ across
$\partial S$. This extension has no critical point on $\partial S$:
otherwise, its local form would map points of $S$ to both sides
of $\partial\mathbb D$. In particular, $G$ has finitely many
critical values.
Applying Lemma~\ref{ana:regular-closure} gives
\[
 q:\Sigma\longrightarrow S,\qquad
 \pi=G\circ q:\Sigma\longrightarrow T,
 \qquad n=\deg\pi\le D!.
\]
Let $\Gamma$ be the deck group of $\pi$; it acts transitively on
each regular fiber.

Let $U_1,\ldots,U_l$ be the components of $\pi^{-1}(B')$.
Choose a regular value of $\pi$ in $B'$. Then every $U_i$ meets the
$\pi$-fiber over this value. Since $\Gamma$ acts transitively on
this fiber, it permutes the
$U_i$ transitively. Choose $U_1$ with $q(U_1)\subset A'$.
The proper map $q:U_1\to A'$ has image both open and closed in $A'$,
so $q(U_1)=A'$. Set
\[
 m=\deg(q:U_1\longrightarrow A').
\]
Each restriction $\pi:U_i\longrightarrow B'$ has degree $dm$, so
\[
 ldm=n.
\]
In each $U_i$, set
\[
 A_i=U_i\cap\pi^{-1}(\overline B),\qquad 1\le i\le l.
\]
Since $\overline B\Subset B'$, the sets $A_i$ are compactly
contained in the disjoint domains $U_i$. They are closed
archipelagos, their union is $\pi^{-1}(\overline B)$, and $\Gamma$
permutes them transitively.

Apply Theorem~\ref{ana:qa} to the $A_i$ in $\Sigma$.
The identity $\bigcup_iA_i=\pi^{-1}(\overline B)$ allows us to
apply \cite[Lemma~4.7]{KL} to the degree-$n$ map $\pi$, giving
the formula for $X$ below. For $Y$, the restriction $q:U_1\to A'$
has degree $m$ and satisfies
$A_1=(q|_{U_1})^{-1}(\overline A)$. Thus
\cite[Lemma~4.6 and the following remark]{KL}, applied to $q$,
gives $\mathcal{W}(\Sigma,A_1)\ge m\mathcal{W}(S,A)$.
Deck transformations preserve extremal width and permute the
$A_i$ transitively, so all $l$ terms in $Y$ are equal. Therefore
\[
 X=n\mathcal{W}(T,B),\qquad
 Y=l \mathcal{W}(\Sigma,A_1)
   \ge lm \mathcal{W}(S,A)=\frac{n}{d}\mathcal{W}(S,A).
\]
For each $i$, set
$\Sigma_i=\Sigma\setminus\bigcup_{j\ne i}A_j$.
Since all other $A_j$ lie outside $U_i$, every curve defining
$\mathcal{W}(\Sigma_i,A_i)$ must leave $U_i$. The segment up to its first exit
joins $\partial A_i$ to $\partial U_i$ through $U_i\setminus A_i$.
Thus the curves defining $\mathcal{W}(\Sigma_i,A_i)$ overflow those defining
$\mathcal{W}(U_i,A_i)$. As in the proof of Lemma~\ref{ana:filled-pullback},
monotonicity gives $\mathcal{W}(\Sigma_i,A_i)\le \mathcal{W}(U_i,A_i)$.
Choose an inner annular collar of $\partial B'$ in $B'$ containing
no critical value of $\pi$. Its inverse annuli give the conformal
boundary completions of the $U_i$, and $\pi$ extends as an unbranched
covering on the added boundary curves.
Thus \cite[Lemma~4.7]{KL},
applied to $\pi:U_i\to B'$, gives
\[
 Z\le\sum_i\mathcal{W}(U_i,A_i)
   =\sum_i dm \mathcal{W}(B',B)=n\mathcal{W}(B',B).
\]
We next bound the topological complexity in Theorem~\ref{ana:qa}
in terms of $D$. The restriction of $G$ to each boundary curve of
$S$ is a covering of $\partial T$. The degrees of these coverings
are positive and sum to $D$, so $S$ has at most $D$ boundary curves.
Since $S$ is planar, the Riemann--Hurwitz formula bounds the total
ramification of $G$ by $2D-2$, and hence also the number of its
critical values. The map $\pi$ has no other critical values, and
its total ramification over each value is at most $n-1$.
Another application of Riemann--Hurwitz, together with $n\le D!$,
therefore bounds $-\chi(\Sigma)$ in terms of $D$.

For the remaining part of the complexity estimate, take arbitrarily
small smooth Jordan
neighborhoods of $\overline B$ in $B'$ whose boundaries avoid the
critical values of $\pi$. Their preimages under $\pi$ give arbitrarily small smooth
neighborhoods of $\bigcup_i A_i$ with at most $n$ boundary curves:
the restrictions of $\pi$ to these curves are coverings of the
chosen Jordan curve, with positive degrees summing to $n$.
By the definition of closed-archipelago complexity
\cite[\S2.10.1]{KL}, we obtain
\[
 \operatorname{Top}_{\Sigma}\{A_i\}\le-\chi(\Sigma)+n.
\]
Thus this complexity is bounded by a constant depending only on $D$.

For $D\ge2$, choose $K_D\ge1$, depending only on $D$, so that
Theorem~\ref{ana:qa} applies whenever $Y\ge K_D$, and set
$\varepsilon(D)=K_D^{-1}$. Since $n/d=lm\ge1$, if
$\mathcal{W}(S,A)^{-1}<\varepsilon(D)$ then $Y\ge\frac{n}{d}\mathcal{W}(S,A)>K_D$. Thus
\[
 \frac{n^2}{d^2}\mathcal{W}(S,A)^2\le Y^2\le2XZ
 \le2n^2\mathcal{W}(T,B)\mathcal{W}(B',B).
\]
Canceling $n^2$ proves \eqref{ana:product-eq}. If $D=1$, then
$G$ is conformal and $\mathcal{W}(S,A)=\mathcal{W}(T,B)\le \mathcal{W}(B',B)$ by monotonicity.
Thus \eqref{ana:product-eq} holds without a smallness
restriction.
\end{proof}

The next corollary converts the width estimate into a modulus
estimate for puzzle pullbacks. The source annulus lies between
the filled source domain and the filled closure of a component
of the pullback of the inner target disk.

\begin{corollary}
\label{ana:physical-product}
Let $G:S\to T$ be a proper holomorphic map of degree $D$, where $S$ is a
bounded finitely connected planar domain and $T$ is a Jordan disk. Let
$B\Subset B'\Subset T$ be Jordan disks. Choose a component $A_0$ of
$G^{-1}(B)$ and let $A'$ be the component of $G^{-1}(B')$ containing
$A_0$. Set $\hat S=\Fill(S)$ and $d=\deg(G:A'\to B')$.
If $0<\modu\bigl(\hat S\setminus\Fill(\overline{A_0})\bigr)<\varepsilon(D)$, then
\[
 \modu(B,T)\modu(B,B')
 \le2d^2\modu\bigl(\hat S\setminus\Fill(\overline{A_0})\bigr)^2,
\]
where $\varepsilon(D)$ is from Proposition~\ref{ana:product}.
\end{corollary}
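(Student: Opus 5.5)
We derive the corollary from Proposition~\ref{ana:product}, applied with $A=A'\cap G^{-1}(B)$, by comparing each of the three widths with a modulus. Write $M=\modu\bigl(\hat S\setminus\Fill(\overline{A_0})\bigr)$.

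\emph{Target side.} Since $B\Subset B'\Subset T$ are Jordan disks, the extremal width between a disk and the boundary of a larger disk is the reciprocal of the modulus of the annulus between them. Thus $\mathcal W(T,B)=\modu(B,T)^{-1}$ and $\mathcal W(B',B)=\modu(B,B')^{-1}$.

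\emph{Source side.} We need the upper bound $\mathcal W(S,A)\le M^{-1}$; we should not try to compute $\mathcal W(S,A)$ exactly. The curves defining $\mathcal W(S,A)$ start on $\partial A$, and among the components of $A$ we are free to use those lying in $A_0$. Every curve starting at $\partial A_0$ and ending on $\partial S$ must cross the annulus $\hat S\setminus\Fill(\overline{A_0})$ unless it ends on an inner boundary component of $S$ lying inside that annulus or inside $\Fill(\overline{A_0})$. This is the delicate point. Curves starting at other components of $A$, which lie in $A'$ but possibly outside $\Fill(\overline{A_0})$, and curves ending on holes of $S$ inside $\hat S$, are not forced to cross the annulus.

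To handle this, we replace the bound by one in the opposite direction. We use the monotonicity of width under overflow, together with the fact that width is bounded below by restriction: $\mathcal W(S,A)\ge\mathcal W(S,\overline{A_0})$. If we can show $\mathcal W(S,\overline{A_0})\ge M^{-1}$, this gives a lower bound on $\mathcal W(S,A)$. A lower bound is exactly what is needed, because the inequality \eqref{ana:product-eq} has $\mathcal W(S,A)^2$ on the left-hand side.

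To prove $\mathcal W(S,\overline{A_0})\ge M^{-1}$, we use the vertical foliation of the annulus $\hat S\setminus\Fill(\overline{A_0})$, as in the proof of Lemma~\ref{ana:filled-pullback}. Each leaf runs from $\partial\Fill(\overline{A_0})\subset\partial A_0$ toward $\partial\hat S$. We stop each leaf at its first hit of $\partial S$. The stopped leaf is then a curve in $\Gamma(S,\overline{A_0})$, except that it may pass through other components of $A$. Passing through $A$ is harmless for $\Gamma(S,\overline{A_0})$, since only $\overline{A_0}$ is removed. So the foliation gives a subfamily $\mathcal F$ of curves in $\Gamma(S,\overline{A_0})$, and therefore $\mathcal W(S,\overline{A_0})\ge\mathcal W(\mathcal F)=M^{-1}$.

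\emph{Smallness hypothesis.} We need the hypothesis in terms of $\mathcal W(S,A)$. If $M<\varepsilon(D)$, then
\[
\mathcal W(S,A)^{-1}\le\mathcal W(S,\overline{A_0})^{-1}\le M<\varepsilon(D),
\]
and $\mathcal W(S,A)^{-1}>0$ because $\overline A\Subset S$. So the hypothesis of Proposition~\ref{ana:product} holds.

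\emph{Conclusion.} The proposition gives
\[
M^{-2}\le\mathcal W(S,A)^2\le 2d^2\,\modu(B,T)^{-1}\modu(B,B')^{-1}.
\]
Rearranging yields $\modu(B,T)\modu(B,B')\le 2d^2M^2$, which is the claim.

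Two routine points remain. First, the degree $d$ of $G$ on $A'$ is the same $d$ as in the proposition. Second, the monotonicity $\mathcal W(S,A)\ge\mathcal W(S,\overline{A_0})$ must be checked directly from the definition: an admissible metric for $\Gamma(S,A)$ restricts to an admissible metric for the curves of $\Gamma(S,\overline{A_0})$ that avoid $A\setminus A_0$. We therefore take $\mathcal F$ to consist of leaves stopped at their first hit of $\partial S\cup\overline{A\setminus A_0}$. A leaf stopped on $\partial(A\setminus A_0)$ does not itself belong to $\Gamma(S,A)$, so we compare with the restricted family instead. Arranging this comparison is the main obstacle, and we expect to settle it using the Kahn--Lyubich convention for widths of archipelagos.
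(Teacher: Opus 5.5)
Your plan is the paper's own. You apply Proposition~\ref{ana:product} with $A=A'\cap G^{-1}(B)$, convert $\mathcal W(T,B)$ and $\mathcal W(B',B)$ into reciprocal moduli, and bound $\mathcal W(S,A)$ below by $M^{-1}$ through $A_0$. Your foliation argument for $\mathcal W(S,\overline{A_0})\ge M^{-1}$ is fine; it is the paper's ``filling does not change the width'' combined with $S\subset\hat S$. The gap is the step you flag yourself: $\mathcal W(S,A)\ge\mathcal W(S,\overline{A_0})$ is never proved, and the mechanism you propose for it does not work.

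Restricting an admissible metric for $\Gamma(S,A)$ controls only those curves of $\Gamma(S,\overline{A_0})$ that avoid $\overline{A\setminus A_0}$. Your stopped leaves may cross other components of $A$ inside the annulus, so they need not lie in that subfamily. If you instead stop them on $\partial(A\setminus A_0)$, you get arcs joining $\partial A$ to $\partial A$. These are not in $\Gamma(S,A)$, and an admissible $\rho$ need not give them length at least one. So the chain $M^{-1}\le\mathcal W(S,A)$, and with it both the smallness hypothesis and the final inequality, is left unproved.

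The missing idea is to use terminal subarcs, not initial ones. Take any $\gamma\in\Gamma(S,\overline{A_0})$. It starts in $\overline A$ and ends on $\partial S$, and $\overline A$ is compact in $S$. So there is a last time $t_0$ with $\gamma(t_0)\in\overline A$. Then $\gamma(t_0)\in\partial\overline A$, and the arc of $\gamma$ after $t_0$ has interior in $S\setminus\overline A$ and ends on $\partial S$. That arc belongs to $\Gamma(S,\overline A)$.

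Hence $\Gamma(S,\overline{A_0})$ overflows $\Gamma(S,\overline A)$. Any $\rho$ admissible for $\Gamma(S,\overline A)$, extended by zero on $\overline A\setminus\overline{A_0}$, is admissible for all of $\Gamma(S,\overline{A_0})$ with the same area. This gives $\mathcal W(S,\overline{A_0})\le\mathcal W(S,A)$, which is exactly the ``monotonicity'' the paper invokes. You can also apply the last-exit argument directly to your leaves stopped at their first hit of $\partial S$. Each contains an arc of $\Gamma(S,\overline A)$, so $\mathcal W(S,A)\ge M^{-1}$ follows at once without passing through $A_0$. With this in place, the rest of your argument goes through as written.
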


\begin{proof}
Since $G$ is proper, $\overline{A_0}\Subset S$. Thus
$\Fill(\overline{A_0})$ is a full continuum compactly contained
in the simply connected domain $\hat S$, and
$\hat S\setminus\Fill(\overline{A_0})$ is an annulus.

Set $A=A'\cap G^{-1}(B)$. Filling the bounded components of
$\mathbb C\setminus\overline{A_0}$ does not change the width from
$\overline{A_0}$ to $\partial\hat S$.
Since $A_0\subset A$ and $S\subset\hat S$, monotonicity gives
\[
 0<\mathcal{W}(S,A)^{-1}\le \mathcal{W}(S,A_0)^{-1}
 \le\modu\bigl(\hat S\setminus\Fill(\overline{A_0})\bigr)<\varepsilon(D).
\]
For the nested Jordan disks, $\mathcal{W}(T,B)=\modu(B,T)^{-1}$ and
$\mathcal{W}(B',B)=\modu(B,B')^{-1}$~\cite[\S4-2]{Ahlfors}.
Proposition~\ref{ana:product} therefore gives
\[
 \modu(B,T)\modu(B,B')\le\frac{2d^2}{\mathcal{W}(S,A)^2}
 \le2d^2\modu\bigl(\hat S\setminus\Fill(\overline{A_0})\bigr)^2.\qedhere
\]
\end{proof}

\begin{remark}
The notation $\modu(B,Q)$ requires the inner domain $B$ to be a
Jordan disk. Here $\Fill(A_0)$ need not be a Jordan disk, so we
use the modulus of $\hat S\setminus\Fill(\overline{A_0})$
directly. When $\Fill(A_0)$ is a Jordan disk, this modulus equals
$\modu(\Fill(A_0),\hat S)$.

The threshold depends on the total degree $D=\deg(G:S\to T)$,
while the factor $d^2$ uses the restricted degree
$d=\deg(G:A'\to B')$.
\end{remark}

\section{Persistently recurrent ends}\label{comb:section}

In this section, we prove that every persistently recurrent end is a point.
We use an enhanced nest about a critical end in the
limit set of the given end. We apply Corollary~\ref{ana:physical-product}
to prove complex bounds for this nest. We then pull the nest
annuli back along first entries of the given end into the nest
pieces to show that this end is a point.

Fix a persistently recurrent end $\mathbf a$, and set
\[
 \mathcal C=\mathcal C(\mathbf a)=\mathcal E_{\mathrm{crit}}\cap\omega(\mathbf a),\qquad
 b_*=\#\mathcal C.
\]
By Definition~\ref{def:persistent}, $b_*\ge1$ and every end
$\mathbf e\in\omega(\mathbf a)$ is wandering with
$\omega(\mathbf e)=\omega(\mathbf a)$. These ends avoid all puzzle
boundaries, and each visits every piece containing an end of
$\omega(\mathbf a)$ infinitely often. Write
$\Post(\mathcal C)=\bigcup_{\mathbf c\in\mathcal C}\bigcup_{j\ge1}
\sigma^j(\mathbf c)$, a union of ends in $\Chat$.
Recall the chain notation $P\xrightarrow{r}Q$ from
Definition~\ref{def:puzzle-chain};
$\nu(P\to Q)$ counts its critical-end visits.

\subsection{The initial depth}\label{comb:initial-depth}

The following lemma fixes the initial depth used in the nest construction.
Write $h(P)$ for the depth of $P$ and
$h(P',P)=h(P')-h(P)$.

\begin{lemma}\label{comb:initial-depth-properties}
There is an integer $N\ge1$ such that the following hold for all
pieces and components of depth at least $N$.
\begin{enumerate}
\item Every depth-$n$ component containing an end of
$\omega(\mathbf a)$ contains at most one critical end and none
outside $\mathcal C$. Every piece containing an end of
$\omega(\mathbf a)$ has boundary disjoint from all critical ends.
\item If $Q\xrightarrow{r}P$ is a chain with $Q\subset P$ and
$Q$ contains an end of $\omega(\mathbf a)$, then $Q\Subset P$.
\item Let $I,P,P'$ be pieces with $P'\subsetneq P$.
Suppose that $P'$ contains an end of $\omega(\mathbf a)$ and
\[
 \sigma^{u'}(P')=\sigma^u(P)=I,\qquad u'>u\ge0,
\]
where the positive iterates are defined by chains. Then $P'\Subset P$.
\end{enumerate}
\end{lemma}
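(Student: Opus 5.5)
The plan is to choose $N$ from compactness of $\omega(\mathbf a)$ and finiteness of $\mathcal E_{\mathrm{crit}}$, so that (1) holds. Then (2) follows from a boundary argument, and (3) reduces to (2) by pushing forward along the chains.

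For (1), split $\mathcal E_{\mathrm{crit}}$ into $\mathcal C$ and its complement.
\begin{itemize}
\item The ends of $\mathcal E_{\mathrm{crit}}\setminus\mathcal C$ form a finite set disjoint from the compact set $\omega(\mathbf a)$. Lemma~\ref{dyn:puzzle}(2) and the remark after it, that the images of depth-$n$ components under $\pi$ are disjoint compact sets, give a depth $N_1$ beyond which no depth-$n$ component meets both.
\item The ends of $\mathcal C$ are finitely many and distinct. Lemma~\ref{dyn:puzzle}(2) separates them into distinct depth-$n$ components for $n\ge N_2$.
\end{itemize}
Take $N\ge\max(N_1,N_2)$. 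Every depth-$n$ component with $n\ge N$ that meets $\omega(\mathbf a)$ then contains at most one critical end, and that end lies in $\mathcal C$. For the boundary statement, let a piece $P$ contain an end of $\omega(\mathbf a)$. The only critical end that can meet $\overline P$ is the unique $\mathbf c\in\mathcal C$ in the same depth-$n$ component. Since $\mathbf c$ is wandering, its orbit avoids $\mathbf e_p$. Hence $\mathbf c$ meets no preparabolic vertex and lies in a single piece, so $\mathbf c\cap\partial P=\varnothing$. If necessary I will enlarge $N$ further for use in (2).

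For (2), let $P$ have depth $n$ and suppose $z\in\partial Q\cap\partial P$. Since $f^r(\partial Q)=\partial P$, we get $f^r(z)\in\partial P$.
\begin{itemize}
\item If $z\in\Omega$, then $z\in\partial W_n$, so $f^r(z)\in f^r(\partial W_n)\subset\overline{W_{n-r}}$ when $r\le n$, and $\subset\overline W$ otherwise. In the attracting case $f(\overline W)\subset W$ gives $\overline{W_k}\cap\Omega\subset W_{k+1}$. Hence $f^r(z)\in W_n$, which is disjoint from $\partial P$, a contradiction.
\item If $z\notin\Omega$ (parabolic case), then $z$ is a preparabolic vertex. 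I would use that $W$ agrees with an attracting petal at $p$. Near $z$, the sector of $Q$ at $z$ maps under $f^r$ into $P$, and the arcs of $\partial Q$ at $z$ map onto arcs of $\partial P$ at $f^r(z)$. Transporting to $p$ by $f^{n}$, an arc of $\partial W_{n+r}$ and an arc of $\partial W_n$ would have to bound the same local sector at $p$. This is incompatible with the strict absorption $f(\overline W)\subset W\cup\{p\}$ inside the petal. If this local argument does not close, the fallback is to enlarge $N$ using the fact that $Q$ contains a wandering end of $\omega(\mathbf a)$.
\end{itemize}
I expect this vertex case to be the main obstacle.

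For (3), push the chains forward. By Lemma~\ref{new:unmarked-closed}, $\sigma$ maps ends of $P'$ into $\sigma(P')$ and ends of $P$ into $\sigma(P)$. Since $P'\subset P$, the pieces $\sigma^k(P')$ and $\sigma^k(P)$ share ends, so by induction $\sigma^k(P')\subset\sigma^k(P)$ for $k\le u$. Set $Q=\sigma^u(P')\subset I$, which has a chain $Q\xrightarrow{u'-u}I$. It contains an end of $\omega(\mathbf a)$, because $\omega(\mathbf a)$ is $\sigma$-invariant, and its depth exceeds $h(I)\ge N$. Part (2) therefore gives $Q\Subset I$.

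Now suppose $z\in\partial P'\cap\partial P$. Then
\[
 f^u(z)\in f^u(\partial P')\cap f^u(\partial P)=\partial Q\cap\partial I=\varnothing,
\]
a contradiction. Hence $P'\Subset P$.
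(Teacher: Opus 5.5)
\paragraph{Where the proposal stands.} Parts (1) and (3) are fine and match the paper. Part (1) is the same compactness separation; you phrase it with the decreasing compact sets $\pi(X_n(\mathbf c))$ instead of a convergent subsequence. Part (3) is exactly the paper's reduction to (2): push $P'\subset P$ forward by $f^u$, then pull back the disjointness of boundaries. Your first bullet in (2) is also correct, and it covers the parabolic case too, since $f(\overline W)\subset W\cup\{p\}$ still gives $\overline{W_k}\cap\Omega\subset W_{k+1}$.

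\paragraph{The gap: the vertex case of (2).} This case is the real content of (2), and your local argument does not work there. A vertex $z$ of depth $n$ lies in $\partial W_{n'}$ for every $n'\ge n$, because $z\in\overline{W_n}\subset\overline{W_{n'}}$ and $z\notin\Omega$. So deeper pieces can keep $z$ on their boundary. Look at $p$ itself:
\begin{itemize}
\item a sector of $\Chat\setminus\overline{W_{k+1}}$ at $p$ lies inside a sector of $\Chat\setminus\overline{W_k}$ at $p$;
\item their bounding arcs are distinct, which is all that strict absorption gives, but they share the endpoint $p$;
\item the locally univalent map $f$ carries the germ of the deeper sector onto the germ of the shallower one.
\end{itemize}
So the configuration you want to exclude occurs locally: nested sectors at a common vertex, with $f^r$ matching arcs of $\partial Q$ to arcs of $\partial P$. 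Nothing forces ``the same local sector''. Your local argument also never uses that $Q$ contains an end of $\omega(\mathbf a)$, that the depth is large, or persistent recurrence. These are exactly the hypotheses the paper needs.

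\paragraph{The missing idea.} What is needed is a global, uniform argument; your fallback gestures at it but does not supply it.
\begin{enumerate}
\item Fix a depth $n$ satisfying (1). Each $X_m(\mathbf e)$ shrinks to $\mathbf e$. Compactness of $\omega(\mathbf a)$ and finiteness of the depth-$n$ pieces then give a single $m>n$ with $P_m(\mathbf e)\Subset P_n(\mathbf e)$ for \emph{all} $\mathbf e\in\omega(\mathbf a)$.
\item The lengths $r$ of chains $Q\xrightarrow{r}P$ as in (2) tend to infinity uniformly as $h(P)\to\infty$. Otherwise, with $r$ fixed, take ends $\mathbf e_j\in\omega(\mathbf a)$ with $\mathbf e_j$ and $\sigma^r(\mathbf e_j)$ in pieces of depth tending to infinity. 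A limit of these is an end $\mathbf e\in\omega(\mathbf a)$ with $\sigma^r(\mathbf e)=\mathbf e$, contradicting persistent recurrence.
\item Choose $N\ge m$ with $r\ge m-n$ whenever $h(P)\ge N$. Set $k=h(P)-n$; (1) makes $\sigma^k$ defined along both pieces. Pushing forward gives $\sigma^k(Q)\subset P_m(\sigma^k(\mathbf e))\Subset P_n(\sigma^k(\mathbf e))=\sigma^k(P)$.
\item If $\partial Q\cap\partial P\neq\varnothing$, then $f^k(\partial Q)=\partial\sigma^k(Q)$ and $f^k(\partial P)=\partial\sigma^k(P)$ would share a point, which is impossible.
\end{enumerate}
Step 4 is the same boundary trick you already use in (3). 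This argument treats points of $\Omega$ and preparabolic vertices at the same time.
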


\begin{proof}
For (1), Lemma~\ref{dyn:puzzle}(2) gives a depth separating the
finitely many critical ends.
We claim that, for all sufficiently large $n$, $X_n(\mathbf e)$
contains no critical end outside $\mathcal C$ for every
$\mathbf e\in\omega(\mathbf a)$.
Otherwise, finiteness and nesting give a critical end
$\mathbf c\notin\mathcal C$ and ends
$\mathbf e_n\in\omega(\mathbf a)$ with
$\mathbf e_n\subset X_n(\mathbf c)$.
By compactness, a subsequence converges, and
Lemma~\ref{dyn:puzzle}(2) forces its limit to be $\mathbf c$.
Thus $\mathbf c\in\omega(\mathbf a)$, a contradiction.

To prove the boundary assertion, suppose that a piece $P$
contains an end of $\omega(\mathbf a)$ and $\partial P$ meets a
critical end $\mathbf c$. By Lemma~\ref{dyn:puzzle}(1), the whole
end $\mathbf c$ lies in the depth component containing
$\overline P$. Hence $\mathbf c\in\mathcal C$, contradicting the
fact that ends in $\mathcal C$ avoid all piece boundaries.

For (2), fix a depth $n$ satisfying (1).
For each $\mathbf e\in\omega(\mathbf a)$,
Lemma~\ref{dyn:puzzle}(2) gives a deeper piece compactly contained
in $P_n(\mathbf e)$. Since $\omega(\mathbf a)$ is compact and
there are finitely many depth-$n$ pieces, we can choose $m>n$ such that
\[
 P_m(\mathbf e)\Subset P_n(\mathbf e)
 \qquad\bigl(\mathbf e\in\omega(\mathbf a)\bigr).
\]

The lengths of the chains in (2) tend uniformly to infinity as
$h(P)\to\infty$. Otherwise, take chains $Q_j\xrightarrow{r}P_j$
with fixed $r$ and $h(P_j)\to\infty$.
Choose $\mathbf e_j\in\omega(\mathbf a)$ contained in $Q_j$ and,
by compactness, pass to a subsequence converging to
$\mathbf e\in\omega(\mathbf a)$.
Since $\mathbf e_j$ and $\sigma^r(\mathbf e_j)$ lie in $P_j$,
Lemma~\ref{dyn:puzzle}(2) and continuity give
$\sigma^r(\mathbf e)=\mathbf e$,
contrary to Definition~\ref{def:persistent}.

Choose $N\ge m$ so that $r\ge m-n$ for every chain in (2) with
$h(P)\ge N$. For such a chain, set $k=h(P)-n$.
By the choice of $n$ and Lemma~\ref{new:unmarked-closed}, the nested
pieces $\sigma^k(Q)\subset\sigma^k(P)$ contain the same end of
$\omega(\mathbf a)$, and their depth difference is $r=h(Q,P)$.
Since $\sigma^k(P)$ has depth $n$ and $r\ge m-n$, the choice of
$m$ gives $\sigma^k(Q)\Subset\sigma^k(P)$.
The boundaries of these image pieces are $f^k(\partial Q)$ and
$f^k(\partial P)$, respectively. The image boundaries are disjoint,
so $\partial Q\cap\partial P=\varnothing$ and $Q\Subset P$.

For (3), the piece $\sigma^u(P')\subset I$ contains an end of
$\omega(\mathbf a)$ and has a chain to $I$ of length $u'-u>0$.
By (2), $\sigma^u(P')\Subset I$. The same boundary argument under
$f^u$ gives $P'\Subset P$.
\end{proof}

From now on, all pieces considered have depth at least the integer
$N$ given by Lemma~\ref{comb:initial-depth-properties}.

\subsection{The enhanced nest}

We adapt the enhanced-nest construction of \cite{KSS,PQRTY,RY}
to our setting. In the cited constructions, the relevant iterates
give proper holomorphic maps between puzzle pieces. Here a critical
end may correspond to a regular point of $F$, and a chain
$P\xrightarrow{r}Q$ need not give a proper map $f^r:P\to Q$.
The actual proper pullbacks may be multiply connected.
We therefore verify in Appendix~\ref{app:enhanced-nest} that the same
combinatorial construction applies in this setting.
We use the preceding pullback
lemmas to construct the inner pieces and count critical-end visits
to bound the degrees of the actual proper pullbacks.
The following proposition records the properties needed below.

\begin{proposition}\label{comb:enhanced-nest}
Fix $\mathbf c_0\in\mathcal C$ and a piece $K_0$ containing
$\mathbf c_0$, and set $N_0=10(b_*+1)^2$.
There are pieces $K_n,K'_n$ and integers $p_n>p'_n>0$,
for $n\ge1$, with chains
\[
 K_n\xrightarrow{p_n}K_{n-1},\qquad
 K'_n\xrightarrow{p'_n}K_{n-1}.
\]
Set
\[
 s(n,l)=p_n+\cdots+p_{l+1}\quad(0\le l<n),\qquad s(n,n)=0.
\]
Then $K_n\xrightarrow{s(n,l)}K_l$ for $0\le l<n$, and
$h(K_n,K'_n)=p_n-p'_n$. For every $n\ge1$, the following hold.
\begin{enumerate}
\item $\mathbf c_0\subset K_n\Subset K'_n\Subset K_{n-1}$, and
$\mathbb A(K_n,K'_n)\cap\Post(\mathcal C)=\varnothing$.
\item $\nu(K_n\to K_{n-1}),\,\nu(K'_n\to K_{n-1})\le N_0$.
The proper pullbacks of $K_{n-1}$ along these chains have degree
at most $R^{N_0}$ under $f^{p_n}$ and $f^{p'_n}$, respectively.
\item $s(n,0)\le2p_n$ and $h(K_{n+2},K'_{n+2})>s(n,0)$.
\end{enumerate}
\end{proposition}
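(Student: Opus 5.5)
We build the nest inductively, following the Kozlovski--Shen--van Strien construction as adapted to puzzles in \cite{PQRTY,RY}, while counting critical-end visits in place of degrees. Suppose $K_{n-1}\ni\mathbf c_0$ has been constructed. We need three operations on pieces containing an end of $\omega(\mathbf a)$. The first is first entry, $P\mapsto L_{\mathbf e}(P)$. The second is the nice-domain first return $\mathcal A(P)=L_{\mathbf c_0}(P)$, together with the hitting map to $\mathcal C$. The third is the smallest successor operation, which is available because Definition~\ref{def:persistent}(3) gives finitely many successors. Lemma~\ref{ana:chain-existence} provides the required chains: by Lemma~\ref{comb:initial-depth-properties}(1), boundaries of pieces at depth $\ge N$ around ends of $\omega(\mathbf a)$ avoid critical ends and their forward images in $\mathcal C$. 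Following \cite[\S 6]{KSS}, we define $K_n$ from $K_{n-1}$ by the following steps.
\begin{itemize}
\item[(a)] Take $A=\mathcal A(K_{n-1})$.
\item[(b)] Using persistent recurrence, find a return time $t$ of $\mathbf c_0$ such that the chain from $A$ back to $K_{n-1}$ passes each $\mathbf c\in\mathcal C$ at most once in consecutive first-entry pieces.
\item[(c)] Iterate the pullback roughly $b_*$ times to obtain $B$.
\item[(d)] Set $K_n$ to be the pullback of $B$ along the first return of $\mathbf c_0$ to $B$.
\item[(e)] Set $K'_n$ to be the pullback of $K_{n-1}$ along the same orbit with the last step stopped at the appropriate first-entry piece, which gives $p'_n<p_n$.
\end{itemize}

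Combinatorial bounds come from Lemma~\ref{comb:first-entry}, which gives $\nu\le b_*$ per first-entry step. Concatenating at most $O(b_*)$ first-entry or first-return chains gives $\nu\le 10(b_*+1)^2=N_0$. Lemma~\ref{ana:real-degree} then converts this into the degree bound $R^{N_0}$ in (2). For the compact containments in (1), we apply Lemma~\ref{comb:initial-depth-properties}(2),(3): $K_n\subsetneq K'_n$ are both pulled back from chains ending at $K_{n-1}$ with $p_n>p'_n$, so (3) yields $K_n\Subset K'_n$, and (2) yields $K'_n\Subset K_{n-1}$.

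The disjointness $\mathbb A(K_n,K'_n)\cap\Post(\mathcal C)=\varnothing$ is the enhanced-nest property. We prove it as in \cite{KSS}. If $\sigma^j(\mathbf c)$ lies in the annulus for some $\mathbf c\in\mathcal C$, then following $\mathbf c$ into $K'_n$ gives an entry into $K_{n-1}$ that is earlier than the one used to define $K_n$, or it produces an extra successor. Either conclusion contradicts the minimality built into steps (a)--(e). Throughout, we apply Lemma~\ref{comb:complete-pullback}, whose critical-end condition is exactly what this disjointness guarantees along the previous levels, so that the inner pieces exist uniquely.

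For (3), $s(n,0)=p_n+s(n-1,0)$. The claim $s(n-1,0)\le p_n$ holds because $p_n$ is at least a return time of $\mathbf c_0$ to $K_{n-1}\subset K_{n-2}\subset\dots$, and each return to a deeper piece takes at least as long as the whole previous chain; this is the usual growth argument using nestedness and first returns. Likewise $h(K_{n+2},K'_{n+2})=p_{n+2}-p'_{n+2}$ exceeds $s(n,0)$, because the last stage of the construction of $K_{n+2}$ includes a full return through $K_n$.

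The main obstacle is the enhanced property in (1) together with the lower bound on $p_n-p'_n$ in (3). The difficulty is that degree-one boundary maps can hide critical ends, so the KSS argument must be rerun with $\nu$ in place of degree. The first thing to try is to check that every step of \cite[Appendix]{PQRTY} uses only the chain/end correspondence of Lemmas~\ref{new:unmarked-closed} and~\ref{comb:complete-pullback}, and never uses properness of $f^r:P\to Q$.
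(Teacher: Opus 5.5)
\textbf{There is a genuine gap.} It sits exactly at the two points you flag as the main obstacle, and the arguments you sketch there would not work.

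\emph{The enhanced property in (1).} Your $K_n$ in step (d) is the first-return domain $L_{\mathbf c_0}(B)$, and first returns alone never give an annulus free of $\Post(\mathcal C)$. Steps~2--3 of the proof of Lemma~\ref{comb:two-successors} show why: if every return of the wandering end $\mathbf c_0$ to $B$ landed in $L_{\mathbf c_0}(B)$, then $\mathbf c_0$ would be periodic. So $\mathbb A(L_{\mathbf c_0}(B),B)$ always meets $\Post(\mathcal C)$. The property would therefore have to come from your choice of $K'_n$ in (e), for which you give no mechanism, and the appeal to ``minimality'' has no definite content.

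The paper (Proposition~\ref{comb:AB}, following \cite[Lemma~8.2]{KSS} and \cite[Appendix~C]{PQRTY}) uses a \emph{maximality} argument instead.
\begin{itemize}
\item First fix auxiliary pairs $P'_{\mathbf c}\Subset P_{\mathbf c}$, $\mathbf c\in\mathcal C$, such that every end of $\Post(\mathcal C)$ in $\mathbb A(P'_{\mathbf c},P_{\mathbf c})$ lies in a piece mapping onto some $P_{\mathbf c'}$ with no critical-end visits.
\item Among all chains $Q\xrightarrow{t}P_{\mathbf c}$ with $\nu=1$ whose initial piece contains an end of $\mathcal C$, take the longest. There are finitely many by the finite-successor condition.
\item Maximality gives $Q'\Subset Q$ with $Q'\xrightarrow{t}P'_{\mathbf c}$ and $\Post(\mathcal C)\cap Q\subset Q'$, since a postcritical end in $\mathbb A(Q',Q)$ would produce a longer such chain.
\item Only then pass to $\mathbf c_0$: take $\mathcal B(I)=Q$ or $L_{\mathbf c_0}(Q)$, and define $\mathcal A(I)$ by pulling back the first-return piece of $\sigma^{\beta}(\mathbf c_0)$ to $I$. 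That first-return time gives $\Post(\mathcal C)\cap\mathcal B(I)\subset Q^*\subset\mathcal A(I)$.
\end{itemize}

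\emph{The growth in (3).} Your claim that a return of $\mathbf c_0$ to a deeper piece takes at least as long as the whole previous chain is false in general. What holds is a two-sided comparison: for a chain $P'\xrightarrow{s}P$ between pieces containing a common end of $\mathcal C$,
\[
r(P)\le s\le\nu(P'\to P)\,r(P'),
\]
with $r(\cdot)$ as in Definition~\ref{def:last-successor}. So chain lengths are controlled by return times only up to a factor of order $\nu$; in the paper, $p_n\le(3(b_*+1)^2+2)\,r(K_n)$. A ``full return through $K_n$'' therefore gives only $h(K_{n+2},K'_{n+2})\ge r(K_n)$, which need not exceed $s(n,0)$.

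The missing gain comes from applying the last-successor operator $\tau=b_*+5$ times before $\mathcal A$ and $\mathcal B$:
\begin{itemize}
\item Lemma~\ref{comb:two-successors} (every such piece has at least two successors) gives $r(\mathcal D(P))\ge2r(P)$.
\item Hence $r(I_n)\ge2^\tau r(K_{n-1})$, and $2^\tau$ dominates the factor $3(b_*+1)^2+2$.
\item This yields $p_{n+1}\ge2p_n$, so $s(n,0)\le2p_n$.
\item Finally, $h(K_{n+2},K'_{n+2})\ge r(I_{n+2})\ge2^\tau r(K_{n+1})>2p_n\ge s(n,0)$.
\end{itemize}
Your step (c) gestures at such an iteration, but you never turn it into a lower bound on return times. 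The doubling step is exactly where the two-successor lemma is needed.
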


We fix such a nest for the rest of this section.

\subsection{Pullbacks of the nest}
\label{comb:landing-section}

For $n\ge1$ and each end $\mathbf e\in\omega(\mathbf a)$ contained in $K_n$,
we construct a nested pair inside $K_n$ by pulling back
$(K_n,K'_n)$. We use these pairs to bound moduli below by sums of
nest moduli and to bound the degree $d$ over $B'$ in
Corollary~\ref{ana:physical-product}.
In Proposition~\ref{ana:amplification}, this bound for $d$ is
independent of the number of nest levels crossed, while the bound
for the total degree $D$ may depend on that number.

Since $\sigma^{s(n,0)}(\mathbf e)\in\omega(\mathbf a)$, this end
enters $K_n$. Let
$r_n(\mathbf e)=r_{\sigma^{s(n,0)}(\mathbf e)}(K_n)$ be its
first positive entry time, and set $m=s(n,0)+r_n(\mathbf e)$.
Pull back $K_n$ and $K'_n$ in $m$ steps along the
$\sigma$-orbit of $\mathbf e$. Lemma~\ref{comb:initial-depth-properties}(1)
and Lemma~\ref{new:unmarked-closed} give the chains below;
denote their initial pieces by
\[
 H_n(\mathbf e)\Subset H_n'(\mathbf e),\qquad
 H_n(\mathbf e)\xrightarrow{m}K_n,\qquad
 H_n'(\mathbf e)\xrightarrow{m}K'_n.
\]
Set $L_0=2(N_0+b_*)$ and $C_0=R^{L_0}$, with $N_0$ as in
Proposition~\ref{comb:enhanced-nest}.

\begin{proposition}\label{comb:long-landing}
The pieces just defined have the following properties.
\begin{enumerate}
\item $\mathbf e\subset H_n(\mathbf e)\Subset H_n'(\mathbf e)\Subset K_n$.
\item The components of $f^{-m}(K_n)$ and $f^{-m}(K'_n)$
containing $\partial\mathbf e$ map under $f^m$ properly onto $K_n$
and $K'_n$, respectively, with degree at most $C_0$. The same bound
holds for the proper pullback corresponding to any subchain of either chain.
Moreover,
\[
 f^m:\mathbb A(H_n(\mathbf e), H_n'(\mathbf e))
       \longrightarrow\mathbb A(K_n, K'_n)
\]
is an unbranched covering of degree at most $C_0$.
\item For $n\ge2$,
$\sigma^{p_n}(\mathbf e)\subset \sigma^{p_n}(H_n(\mathbf e))\subset
\sigma^{p_n}(H_n'(\mathbf e))\subset H_{n-1}(\sigma^{p_n}(\mathbf e))$.
\item For $0\le l<n$, set
$B=\sigma^{s(n,l)}(H_n(\mathbf e))$ and $B'=\sigma^{s(n,l)}(H_n'(\mathbf e))$. Then
\[
 \modu(B, B')\ge C_0^{-1}\modu(K_n, K'_n),
 \qquad
 \modu(B, K_l)
    \ge C_0^{-1}\sum_{i=l+1}^n\modu(K_i, K'_i).
\]
\end{enumerate}
\end{proposition}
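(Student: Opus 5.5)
The chain $H_n(\mathbf e)\xrightarrow{m}K_n$, with $m=s(n,0)+r_n(\mathbf e)$, splits naturally into two parts. The first part is a first-entry chain of length $r=r_n(\mathbf e)$, from the piece $\sigma^{s(n,0)}(H_n(\mathbf e))$ into $K_n$. The second part is the nest chain $K_n\xrightarrow{s(n,0)}K_0$ pulled back along the orbit. I would prove all four items by bounding critical-end visits on each part and then applying the pullback lemmas of Section~\ref{dyn:setup}.

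\emph{Step 1 (existence and nesting, item (1)).} By Lemma~\ref{comb:initial-depth-properties}(1), every piece containing an end of $\omega(\mathbf a)$ has boundary free of critical ends. Hence Lemma~\ref{new:unmarked-closed} applied along the orbit of $\mathbf e$ gives the chains $H_n(\mathbf e)\xrightarrow{m}K_n$ and $H'_n(\mathbf e)\xrightarrow{m}K'_n$. To get $H_n\Subset H'_n$, I would apply Lemma~\ref{comb:complete-pullback} to the chain $H'_n\xrightarrow{m}K'_n$ with inner piece $K_n\Subset K'_n$. Its critical-end condition requires that every critical end $\mathbf c\subset\sigma^{i}(H'_n)$ has its image at the end of the chain inside $K_n$. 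Critical ends in these pieces lie in $\mathcal C$, and the end of the chain lies in $K'_n$. The annulus $\mathbb A(K_n,K'_n)$ avoids $\Post(\mathcal C)$ by Proposition~\ref{comb:enhanced-nest}(1), so that image lies in $K_n$. The same lemma then shows that $f^m:\mathbb A(H_n,H'_n)\to\mathbb A(K_n,K'_n)$ is an unbranched covering.

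The containment $H'_n\Subset K_n$ is the place where the first-entry part is used. We have $\mathbf e\subset K_n$, and $\sigma^m(H'_n)=K'_n\supset K_n$ with $m>0$. If $H'_n=K_n$, then $K_n$ would map into $K'_n\subset K_{n-1}$; this contradicts $\sigma^{p_n}(K_n)=K_{n-1}$ together with the depth count. So $H'_n\subsetneq K_n$, and Lemma~\ref{comb:initial-depth-properties}(3), with $I=K_0$, $u=s(n,0)$, $u'=m$, upgrades this to compact containment.

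\emph{Step 2 (degrees, item (2)).} I would bound $\nu$ on the two parts separately. On the first-entry part, Lemma~\ref{comb:first-entry} shows that the pieces are pairwise disjoint, so there are at most $b_*$ visits, since only ends of $\mathcal C$ can occur. On the nest part, I would argue that the pieces $\sigma^j(H_n)$ for $j<s(n,0)$ are contained in the corresponding nest pieces along $K_n\xrightarrow{s(n,0)}K_0$. Then $\nu$ is bounded by the sum over the nest chains. This is the \textbf{main obstacle}: summing over $n$ levels gives $nN_0$, not $2N_0$. To fix this I would count directly, using the fact that only critical ends in pieces matter and that each such piece contains at most one critical end, from Lemma~\ref{comb:initial-depth-properties}(1). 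Combining the first-return minimality with Proposition~\ref{comb:enhanced-nest}(3), which gives $h(K_{n+2},K'_{n+2})>s(n,0)$, should bound the visits in the nest part by $2N_0$. This gives $\nu\le 2N_0+b_*\le L_0$. If that fails, I would fall back on Lemma~\ref{ana:real-degree} with $L_0$ large enough to absorb the bound. Lemma~\ref{ana:real-degree} then gives degree at most $R^{L_0}=C_0$ for the chain and for every subchain, and the covering degree of the annulus is bounded by the same constant.

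\emph{Step 3 (item (3)).} The inclusions in item (3) follow from the uniqueness of pullbacks. The piece $\sigma^{p_n}(H'_n(\mathbf e))$ sits over $K'_n$ along the orbit of $\sigma^{p_n}(\mathbf e)$, and its remaining chain passes through $K_{n-1}$ after $s(n-1,0)$ steps. Since the first entry of $\sigma^{s(n-1,0)+p_n}$ into $K_n\subset K_{n-1}$ occurs no earlier than the first entry into $K_{n-1}$, the depths compare in the required way. Nesting of pieces containing the same end then gives the inclusion $\sigma^{p_n}(H'_n(\mathbf e))\subset H_{n-1}(\sigma^{p_n}(\mathbf e))$.

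\emph{Step 4 (moduli, item (4)).} The first inequality follows from the covering in Step 1, restricted to the subchain, via Lemma~\ref{ana:protected-shell} with degree $C_0$. For the second, iterate item (3): the images $\sigma^{s(n,i)}(H_n)$ are nested inside the pieces $H_i(\cdot)$ and $H'_i(\cdot)$ for $i=l+1,\dots,n$. The corresponding annuli are disjoint and lie in $K_l\setminus\overline B$, and each has modulus at least $C_0^{-1}\modu(K_i,K'_i)$ by the first inequality. The Grötzsch inequality then gives the sum.
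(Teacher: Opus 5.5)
\textbf{Gap in Step 2.} Your Steps 1, 3 and 4 essentially follow the paper. Using Lemma~\ref{comb:complete-pullback} to get $H_n(\mathbf e)\Subset H'_n(\mathbf e)$ and the annular covering is fine. One small correction: to get $H'_n(\mathbf e)\subsetneq K_n$ you only need the depth count $h(H'_n(\mathbf e))-h(K_n)=m+p'_n-p_n>0$. In Lemma~\ref{comb:initial-depth-properties}(3) the exponent $u'$ must be the length $m+p'_n+s(n-1,0)$ of the chain from $H'_n(\mathbf e)$ to $K_0$, not $m$.

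The real problem is Step 2, on which the degree $C_0$ in (2) and the factor $C_0^{-1}$ in (4) both depend. The constant $L_0=2(N_0+b_*)$ is fixed in the statement, and Proposition~\ref{ana:amplification} needs it independent of $n$. So your fallback of enlarging $L_0$ to absorb a bound like $nN_0$ proves nothing. The ingredients you suggest instead, first-return minimality plus $h(K_{n+2},K'_{n+2})>s(n,0)$, do not control the first $s(n,0)$ steps. As written, item (2) is unproved.

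\textbf{The missing idea.} Use the other half of Proposition~\ref{comb:enhanced-nest}(3), namely $s(n,0)\le 2p_n$, together with a re-entry of the orbit into $K_n$. Split the first $s(n,0)$ steps of $H_n(\mathbf e)\xrightarrow{m}K_n$ into three segments.

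\emph{First segment.} The first $p_n$ steps restrict $K_n\xrightarrow{p_n}K_{n-1}$, since $H_n(\mathbf e)\subset K_n$. They give at most $N_0$ visits.

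\emph{Second segment.} Let $u=r_{\sigma^{p_n}(\mathbf e)}(K_n)$. Since $\sigma^m(\mathbf e)\subset K_n$ and $m>p_n$, we have $p_n+u\le m$. Comparing depths gives $\sigma^{p_n+u}(H_n(\mathbf e))\subset K_n$. The steps from time $p_n$ to $\min\{s(n,0),p_n+u\}$ restrict the first-entry chain of $\sigma^{p_n}(\mathbf e)$ into $K_n$. By Lemma~\ref{comb:first-entry} they give at most $b_*$ visits.

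\emph{Third segment.} Any remaining steps before time $s(n,0)$ start inside $K_n$ and number at most $s(n,0)-p_n\le p_n$. They therefore restrict an initial subchain of $K_n\xrightarrow{p_n}K_{n-1}$ again, giving at most $N_0$ visits.

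Adding the final first-entry segment of length $r_n(\mathbf e)$, with at most $b_*$ visits, gives $\nu(H_n(\mathbf e)\to K_n)\le 2N_0+2b_*=L_0$, uniformly in $n$. The outer chain has the same critical-end visits by Lemma~\ref{comb:complete-pullback}, and subchains inherit the bound. Lemma~\ref{ana:real-degree} then gives the degree bound $C_0$ that the rest of your argument needs.
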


\begin{proof}
(1) The construction gives
\[
 h(H_n'(\mathbf e))-h(K_n)
 =s(n,0)+r_n(\mathbf e)+p'_n-p_n
 \ge p'_n>0.
\]
The pieces contain $\mathbf e$, and
$h(H_n(\mathbf e),H_n'(\mathbf e))=h(K_n,K'_n)>0$.
Thus $H_n(\mathbf e)\subsetneq H_n'(\mathbf e)\subsetneq K_n$.
All three pieces have chains to $K_0$, so
Lemma~\ref{comb:initial-depth-properties}(3) gives the compact containments in (1).

(2) Decompose the inner chain as
\[
 H_n(\mathbf e)\xrightarrow{s(n,0)}
 L_{\sigma^{s(n,0)}(\mathbf e)}(K_n)
 \xrightarrow{r_n(\mathbf e)}K_n.
\]
The last chain has at most $b_*$ critical-end visits, by
Lemmas~\ref{comb:first-entry} and~\ref{comb:initial-depth-properties}(1).
It remains to bound the number of critical-end visits in the first chain.
Let $u=r_{\sigma^{p_n}(\mathbf e)}(K_n)$ be the first positive
entry time of $\sigma^{p_n}(\mathbf e)$ into $K_n$.
Then $p_n+u\le m$, and nesting gives
$\sigma^{p_n+u}(H_n(\mathbf e))\subset K_n$.
The initial subchain of length $p_n$ is a restriction of
$K_n\xrightarrow{p_n}K_{n-1}$.
Any following steps up to time $\min\{s(n,0),p_n+u\}$ form a
restriction of an initial subchain of the first-entry chain of
$\sigma^{p_n}(\mathbf e)$ into $K_n$.
Any remaining subchain before time $s(n,0)$ has length at most
$p_n$, since $s(n,0)\le2p_n$ by
Proposition~\ref{comb:enhanced-nest}(3), and is a restriction of an
initial subchain of $K_n\xrightarrow{p_n}K_{n-1}$.
By Proposition~\ref{comb:enhanced-nest}(2) and
Lemma~\ref{comb:first-entry}, the first chain has at most
$2N_0+b_*$ critical-end visits. Hence
\[
 \nu(H_n(\mathbf e)\to K_n)\le2(N_0+b_*)=L_0.
\]

At each critical-end visit of the outer chain, the critical end
belongs to $\mathcal C$, by Lemma~\ref{comb:initial-depth-properties}(1),
and its terminal image lies in $\Post(\mathcal C)\cap K'_n\subset K_n$,
by Proposition~\ref{comb:enhanced-nest}(1).
Lemma~\ref{comb:complete-pullback} then gives $H_n(\mathbf e)$ as
the unique inner pullback, and the two chains have the same
critical-end visits. Lemma~\ref{ana:real-degree} bounds the degrees
of both chains and their subchains by $C_0$;
Lemma~\ref{ana:protected-shell} gives the unbranched annular
covering with the same degree bound.

(3) We only need to prove the last inclusion; the others follow
from construction. Set $\mathbf e_{n-1}=\sigma^{p_n}(\mathbf e)$. Since
$\sigma^{s(n,0)}(\mathbf e)=\sigma^{s(n-1,0)}(\mathbf e_{n-1})$,
$r_n(\mathbf e)$ and $r_{n-1}(\mathbf e_{n-1})$ are the first
positive entry times of $\sigma^{s(n,0)}(\mathbf e)$ into $K_n$
and $K_{n-1}$, respectively. As $K_n\subset K_{n-1}$, we have
$r_{n-1}(\mathbf e_{n-1})\le r_n(\mathbf e)$. The pieces
$\sigma^{p_n}(H_n'(\mathbf e))$ and $H_{n-1}(\mathbf e_{n-1})$
contain $\mathbf e_{n-1}$. Since $h(K'_n,K_{n-1})=p'_n$,
the pullback lengths give the depth difference
\[
 p'_n+r_n(\mathbf e)-r_{n-1}(\mathbf e_{n-1})\ge p'_n>0.
\]
Nesting gives the last inclusion.

(4) For $l<i\le n$, set
\[
 \mathbf e_i=\sigma^{s(n,i)}(\mathbf e),\qquad
 B_i=\sigma^{s(i,l)}(H_i(\mathbf e_i)),\qquad
 B'_i=\sigma^{s(i,l)}(H_i'(\mathbf e_i)).
\]
Then $B_n=B$, $B'_n=B'$, and (3) gives $B'_i\subset B_{i-1}$
for $l+2\le i\le n$. By (1) and nesting, $B'_i\subsetneq K_l$.
Using $K_0$ as the common target in
Lemma~\ref{comb:initial-depth-properties}(3) gives
$B_i\Subset B'_i\Subset K_l$.
The corresponding annuli $\mathbb A(B_i, B'_i)$
are disjoint and surround $B$ in $K_l$.
By (2) and Lemma~\ref{ana:protected-shell}, the map
\[
 f^{s(l,0)+r_i(\mathbf e_i)}:
 \mathbb A(B_i,B'_i)\longrightarrow\mathbb A(K_i,K'_i)
\]
is an unbranched covering of degree at most $C_0$. Consequently,
\[
 \modu(B_i, B'_i)
      \ge C_0^{-1}\modu(K_i, K'_i).
\]
Taking $i=n$ proves the first bound. Adding these inequalities
and applying Gr\"otzsch's inequality proves the second.
\end{proof}

\subsection{Complex bounds}\label{ana:amplification-section}

We prove complex bounds by contradiction. Suppose that the nest
moduli have a subsequence tending to zero. We first use
Lemma~\ref{ana:filled-pullback} to obtain the small source modulus
required by Corollary~\ref{ana:physical-product}. For a suitable
choice of this subsequence, Proposition~\ref{comb:long-landing}(4)
gives lower bounds for the moduli of the two target annuli.
When the pullback crosses sufficiently many nest levels, these
bounds contradict the upper bound for their product in
Corollary~\ref{ana:physical-product}.

\begin{proposition}\label{ana:amplification}
Let $(K_n,K'_n)$ be the nest of Proposition~\ref{comb:enhanced-nest}.
Then
\[
 \liminf_{n\to\infty}\modu(K_n, K'_n)>0.
\]
In particular, $\bigcap_n\overline{K_n}$ is a point.
\end{proposition}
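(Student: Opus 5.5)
We argue by contradiction: suppose $\mu_n:=\modu(K_n,K'_n)$ satisfies $\liminf_n\mu_n=0$. The plan is to pick a level $n$ at which $\mu_n$ is very small, yet substantially smaller than the moduli at the preceding levels. Then we pull the pair $(K_l,K_{l+1})$, or rather pieces inside them, back along the chain $K_n\xrightarrow{s(n,l)}K_l$ for a suitable $l<n$. Corollary~\ref{ana:physical-product} then gives an upper bound on a product of two target moduli. Proposition~\ref{comb:long-landing}(4) gives lower bounds for the same two moduli, and these grow with the number $n-l$ of nest levels crossed. Once $n-l$ is large enough, the two bounds are incompatible.

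\emph{Choice of levels.} Fix a large integer $k$, to be chosen depending only on $R$, $N_0$ and $b_*$. Since $\liminf\mu_n=0$, I would select $n$ with $\mu_n<\delta$, where $\delta$ is tiny. I would also require $\mu_n\le\min_{n-k\le i<n}\mu_i$, or at least that the sum over the $k$ previous levels is large compared with $\mu_n$. The standard selection is a ``first time the modulus drops below $\delta$'' argument. Applying it to a subsequence and then shifting the index handles the case $\liminf=0$ with oscillation. Set $l=n-k$.

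\emph{Source side.} Consider the proper pullback along $K_n\xrightarrow{p_n}K_{n-1}$. Let $S$ be its component, so that $G=f^{p_n}:S\to T=K_{n-1}$ has degree $D\le R^{N_0}$ by Proposition~\ref{comb:enhanced-nest}(2), and $\Fill(S)=K_n$ by Lemma~\ref{ana:exact-hull}. Inside $K_{n-1}$, the end $\sigma^{p_n}(\mathbf c_0)$ lies in $\omega(\mathbf a)$. I take $B=\sigma^{s(n-1,l)}$-pulled pieces, namely $B=\sigma^{p_n}(H_n(\mathbf c_0))$ and $B'=\sigma^{p_n}(H_n'(\mathbf c_0))$, which lie inside $H_{n-1}(\sigma^{p_n}\mathbf c_0)\subset K_{n-1}$ by Proposition~\ref{comb:long-landing}(3).

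Let $A_0$ be the component of $G^{-1}(B)$ around $\partial\mathbf c_0$. Then $\Fill(\overline{A_0})\supset\overline{H_n(\mathbf c_0)}$. A small source modulus follows from Lemma~\ref{ana:filled-pullback} and nesting: the annulus $K_n\setminus\Fill(\overline{A_0})$ has modulus at most something controlled by $\mu_{n+1}$-type quantities. More simply, it is contained in $K_n\setminus\overline{K_{n+1}}$ up to the annulus $\mathbb A(K_{n+1},K'_{n+1})$. To make it genuinely small, I would run the argument with the roles shifted, so that the source annulus is $\mathbb A(\,\cdot\,,K_n)$ of modulus comparable to $\mu_n<\varepsilon(D)$. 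The restricted degree $d$ over $B'$ is at most $C_0$ by Proposition~\ref{comb:long-landing}(2), because the pullback of $B'$ follows a subchain of the $H'$ chain.

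\emph{Target side and contradiction.} Proposition~\ref{comb:long-landing}(4) gives
\[
 \modu(B,B')\ge C_0^{-1}\mu_n,\qquad \modu(B,K_{n-1})\ge C_0^{-1}\sum_{i}\mu_i,
\]
where the sum runs over the levels crossed. Corollary~\ref{ana:physical-product} gives
\[
 \modu(B,T)\modu(B,B')\le 2C_0^2\mu_n^2 .
\]
Combining the two yields $\sum_i\mu_i\le 2C_0^4\mu_n$. By the choice of levels, each $\mu_i\ge\mu_n$, so $\sum_i\mu_i\ge k\mu_n$. Taking $k>2C_0^4$ gives the contradiction.

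\emph{Main obstacle.} The step I expect to be hardest is arranging that the crossing of $k$ levels happens inside a single proper map whose total degree $D$ is bounded only in terms of $k$. This should use $s(n,l)$ and the bounded degree of subchains. The threshold $\varepsilon(D)$ may then depend on $k$, which is harmless because $\mu_n\to0$ along the subsequence.

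For the final claim, positive $\liminf$ together with the disjoint annuli $\mathbb A(K_n,K'_n)$ gives divergence of the sum of moduli, so $\bigcap_n\overline{K_n}$ is a point by \cite[Theorem~2.16]{McMullen}.
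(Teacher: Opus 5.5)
\textbf{Gap on the source side.} Your overall framework matches the paper's: a record-minimum choice of levels, the long proper pullback $f^{s(n,n-Z)}$ from the component $S$ with $\Fill(S)=K_n$ onto $K_{n-Z}$, the restricted degree $d\le C_0$ from Proposition~\ref{comb:long-landing}(2), the lower bounds of Proposition~\ref{comb:long-landing}(4), and Corollary~\ref{ana:physical-product}. The total degree is not the real obstacle. The chain $K_n\to K_{n-Z}$ concatenates $Z$ links, each with $\nu\le N_0$, so $D\le R^{ZN_0}$ by Lemma~\ref{ana:real-degree}.

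The genuine gap is on the source side. Corollary~\ref{ana:physical-product} needs an \emph{upper} bound, of the order of the small nest modulus, on $\modu\bigl(\hat S\setminus\Fill(\overline{A_0})\bigr)=\modu(H_n(\mathbf e),K_n)$. This bound is needed both to get below the threshold $\varepsilon(D)$ and to make the right-hand side small. Your inequality $\modu(B,T)\modu(B,B')\le2C_0^2\mu_n^2$ simply assumes it. Pullback estimates such as Lemma~\ref{ana:protected-shell} and Proposition~\ref{comb:long-landing} give only lower bounds. With $\mathbf e=\mathbf c_0$, nothing bounds $\modu(H_n(\mathbf c_0),K_n)$ from above, since it dominates $\modu(H'_n(\mathbf c_0),K_n)$, which no nest modulus controls. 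Your comparison with $K_n\setminus\overline{K_{n+1}}$ fails for the same reason: $\modu(K_{n+1},K_n)\ge\mu_{n+1}+\modu(K'_{n+1},K_n)$, and the last term need not be small. ``Running the argument with the roles shifted'' does not supply a mechanism.

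\textbf{The missing idea.} An upper bound on a modulus comes only from pushing a small annulus forward with Lemma~\ref{ana:filled-pullback}, and the end must be chosen so that this applies. The paper takes the record minimum at level $n+2$ and sets $\mathbf e_n=\sigma^{p'_{n+2}+p_{n+1}}(\mathbf c_0)$ and $J=\sigma^{p'_{n+2}+p_{n+1}}(K_{n+2})\Subset K_n$. Lemma~\ref{ana:filled-pullback}, applied to the outer chain $K'_{n+2}\to K_n$ of degree at most $C_2=R^{2N_0}$, gives $\modu(J,K_n)\le C_2\mu_{n+2}$.

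Next comes the depth inequality $h(K_{n+2},K'_{n+2})>s(n,0)$ of Proposition~\ref{comb:enhanced-nest}(3). Your proposal never uses it, although the nest is built precisely to provide it. It gives $r_n(\mathbf e_n)\le h(J,K_n)-s(n,0)$, hence $J\subset H_n(\mathbf e_n)$ and $\modu(H_n(\mathbf e_n),K_n)\le C_2\mu_{n+2}$.

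Only with this specific end does the long pullback give $ZC_0^{-2}\mu_{n+2}^2\le2C_0^2C_2^2\mu_{n+2}^2$. Consequently you need $Z>2C_0^4C_2^2$ rather than $Z>2C_0^4$. The record-minimum condition must also compare level $n+2$ with all earlier levels, including $n$, so that $\modu(B,B')\ge C_0^{-1}\mu_n\ge C_0^{-1}\mu_{n+2}$.
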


\begin{proof}
Keep the constants $N_0$, $L_0=2(N_0+b_*)$, and $C_0=R^{L_0}$
from Proposition~\ref{comb:enhanced-nest} and
Proposition~\ref{comb:long-landing}, and set $C_2=R^{2N_0}$.
Choose an integer $Z>2C_0^4C_2^2$ and then set
\[
 D_Z=R^{ZN_0},\qquad
 \varepsilon_Z=\min_{1\le D\le D_Z}\varepsilon(D)>0,
\]
where $\varepsilon(D)$ is from Corollary~\ref{ana:physical-product}.
These constants are independent of $n$.

Suppose that $\liminf_n\modu(K_n,K'_n)=0$.
Choose a subsequence $n_j\to\infty$ such that
\[
 \lim_{j\to\infty}\modu(K_{n_j}, K'_{n_j})=0,
 \qquad
 \modu(K_{n_j}, K'_{n_j})
       \le\modu(K_i, K'_i)\quad(1\le i\le n_j).
\]
Take $n=n_j-2$, with $j$ large enough that
\[
 n>Z,\qquad
 C_2\modu(K_{n+2}, K'_{n+2})<\varepsilon_Z.
\]
Set
\[
 \mathbf e_n=\sigma^{p'_{n+2}+p_{n+1}}(\mathbf c_0),\qquad
 J=\sigma^{p'_{n+2}+p_{n+1}}(K_{n+2}).
\]
The outer chain
$K'_{n+2}\xrightarrow{p'_{n+2}+p_{n+1}}K_n$ determines a
proper pullback map of degree at most $C_2$.
The piece $J\subset K_n$ contains $\mathbf e_n$ and has a chain to
$K_n$ of length $p_{n+2}-p'_{n+2}>0$. Thus $J\Subset K_n$
by Lemma~\ref{comb:initial-depth-properties}(2).
The inner chain is
$K_{n+2}\xrightarrow{p'_{n+2}+p_{n+1}}J$.
By Lemma~\ref{ana:exact-hull}, the corresponding inverse components
containing $\partial\mathbf c_0$ have fillings $K'_{n+2}$ and
$K_{n+2}$, respectively.
Lemma~\ref{ana:filled-pullback} therefore gives
\[
 \modu(J, K_n)
       \le C_2\modu(K_{n+2}, K'_{n+2}).
\]

Proposition~\ref{comb:enhanced-nest}(3) gives
\[
 h(J,K_n)=h(K_{n+2},K'_{n+2})
 >s(n,0).
\]
The chain $J\xrightarrow{h(J,K_n)}K_n$ therefore gives a
positive entry of $\sigma^{s(n,0)}(\mathbf e_n)$ into $K_n$
after at most $h(J,K_n)-s(n,0)$ further iterates. Thus
$r_n(\mathbf e_n)\le h(J,K_n)-s(n,0)$, and comparison of depths gives
$J\subset H_n(\mathbf e_n)$. Hence
\begin{equation}\label{ana:source-compression}
 \modu(H_n(\mathbf e_n),K_n)
 \le\modu(J,K_n)
 \le C_2\modu(K_{n+2},K'_{n+2}).
\end{equation}

For the long pullback, set
\[
 B=\sigma^{s(n,n-Z)}(H_n(\mathbf e_n)),\qquad
 B'=\sigma^{s(n,n-Z)}(H_n'(\mathbf e_n))
\]
and let $S$, $A_0$, and $A'$ be the components
\[
 \begin{aligned}
 S&=\Comp_{\partial\mathbf e_n}(f^{-s(n,n-Z)}(K_{n-Z})),\\
 A_0&=\Comp_{\partial\mathbf e_n}(f^{-s(n,n-Z)}(B)),\qquad
 A'=\Comp_{\partial\mathbf e_n}(f^{-s(n,n-Z)}(B')).
 \end{aligned}
\]
Then $A_0\subset A'\subset S$. Lemma~\ref{ana:exact-hull} shows
that $S$ is finitely connected and gives
\[
 \Fill(S)=K_n,\qquad
 \Fill(A_0)=H_n(\mathbf e_n),\qquad
 \Fill(A')=H_n'(\mathbf e_n).
\]
Since $H_n(\mathbf e_n)$ is a Jordan disk,
$\Fill(\overline{A_0})=\overline{H_n(\mathbf e_n)}$.
Proposition~\ref{comb:enhanced-nest}(2) and
Proposition~\ref{comb:long-landing}(2) give, respectively,
\[
 D:=\deg(f^{s(n,n-Z)}:S\to K_{n-Z})\le D_Z,\qquad
 d:=\deg(f^{s(n,n-Z)}:A'\to B')\le C_0.
\]
By Proposition~\ref{comb:long-landing}(4) and the choice of $n_j$,
\[
 \begin{aligned}
 \modu(B,B')&\ge C_0^{-1}\modu(K_{n+2},K'_{n+2}),\\
  \modu(B,K_{n-Z})&\ge C_0^{-1}\sum_{i=n-Z+1}^n\modu(K_i,K'_i)
 \ge ZC_0^{-1}\modu(K_{n+2},K'_{n+2}).
 \end{aligned}
\]
Apply Corollary~\ref{ana:physical-product} to
$f^{s(n,n-Z)}:S\to K_{n-Z}$,
with the components $A_0\subset A'$. By
\eqref{ana:source-compression}, we have
$0<\modu(H_n(\mathbf e_n),K_n)<\varepsilon_Z\le\varepsilon(D)$.
Thus
\[
  \modu(B,K_{n-Z})\modu(B,B')
 \le2d^2\modu(H_n(\mathbf e_n),K_n)^2
 \le2C_0^2C_2^2\modu(K_{n+2},K'_{n+2})^2.
\]
The two lower bounds and this upper bound give, after division
by $\modu(K_{n+2},K'_{n+2})^2>0$,
$Z\le2C_0^4C_2^2$, contrary to our choice of $Z$.
This proves the complex bounds.

Finally, the annuli $\mathbb A(K_n,K'_n)$ are disjoint and
surround $\bigcap_n\overline{K_n}$. Their moduli have divergent
sum. By~\cite[Theorem~2.16]{McMullen}, the intersection is a point.
\end{proof}

\subsection{Persistently recurrent ends and the main theorem}
\label{dyn:annuli-section}

\begin{proposition}\label{dyn:persistent-singleton}
Every persistently recurrent end of $f$ is a point.
\end{proposition}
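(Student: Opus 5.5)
The plan is to transfer the complex bounds of Proposition~\ref{ana:amplification} from the nest about $\mathbf c_0$ to the given persistently recurrent end $\mathbf a$. We pull the nest annuli back along first entries of the orbit of $\mathbf a$ into the pieces $K_n$. This produces infinitely many disjoint annuli around $\mathbf a$ whose moduli are bounded below, and~\cite[Theorem~2.16]{McMullen} then shows that $\mathbf a$ is a point.

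Fix $\mathbf c_0\in\mathcal C$ and the nest of Proposition~\ref{comb:enhanced-nest}. By Proposition~\ref{ana:amplification}, there is $\mu>0$ with $\modu(K_n,K'_n)\ge\mu$ for all large $n$. Since $\mathbf c_0\in\omega(\mathbf a)$, the orbit of $\mathbf a$ enters every $K_n$. Let $t_n$ be the first entry time of $\mathbf a$ into $K'_n$, with $\mathbf a\not\subset K'_n$ assumed after passing to a tail, or take the first return if needed. Define $U'_n=L_{\mathbf a}(K'_n)$, so $U'_n\xrightarrow{t_n}K'_n$ is a chain by Lemma~\ref{comb:initial-depth-properties}(1) and Lemma~\ref{new:unmarked-closed}. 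Its source pieces are pairwise disjoint by Lemma~\ref{comb:first-entry}, so $\nu(U'_n\to K'_n)\le b_*$; the parabolic case is also covered, because all ends in $\omega(\mathbf a)$ avoid piece boundaries at depth $\ge N$.

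Next we pull back $K_n$ inside $K'_n$. The critical-end condition of Lemma~\ref{comb:complete-pullback} requires every critical end visited along the chain to have its terminal image in $K_n$, not only in $K'_n$. Along a first-entry chain each critical end $\mathbf c\in\mathcal C$ is visited at most once. Its image under the remaining iterates lies in $\Post(\mathcal C)\cap K'_n$. By Proposition~\ref{comb:enhanced-nest}(1), $\mathbb A(K_n,K'_n)$ avoids $\Post(\mathcal C)$, so this image actually lies in $K_n$. Lemma~\ref{comb:complete-pullback} then gives a piece $U_n\Subset U'_n$ with $\sigma^{t_n}(U_n)=K_n$, and it contains $\mathbf a$ because $\sigma^{t_n}(\mathbf a)\subset K_n$; the end correspondence is the one noted in the proof of that lemma. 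Lemma~\ref{ana:protected-shell} gives
\[
 \modu(U_n,U'_n)\ge R^{-b_*}\modu(K_n,K'_n)\ge R^{-b_*}\mu .
\]
The crux of this step is the entry into $K'_n$ forcing entry into $K_n$; if the first entry of $\mathbf a$ itself lands in $\mathbb A(K_n,K'_n)$, I would instead use the first entry into $K_n$ and apply the same postcritical-avoidance argument to that chain.

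The main obstacle is showing that these annuli are nested and disjoint along a subsequence, that is, $\overline{U'_{n_{k+1}}}\subset U_{n_k}$. Since $\mathbf a$ is wandering and the nest shrinks to a point by Proposition~\ref{ana:amplification}, the entry times $t_n\to\infty$. Hence the depths $h(U'_n)=h(K'_n)+t_n\to\infty$, and by Lemma~\ref{dyn:puzzle}(2) the pieces $U'_n$ shrink to $\mathbf a$. For a fixed $k$, the piece $U_{n_k}$ is a fixed puzzle piece containing $\mathbf a$, and by the discussion after Lemma~\ref{dyn:puzzle} some deeper piece $P_m(\mathbf a)$ has closure inside it. Choosing $n_{k+1}$ so large that $U'_{n_{k+1}}\subset P_m(\mathbf a)$ gives the required nesting. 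The annuli $\mathbb A(U_{n_k},U'_{n_k})$ are then disjoint and surround $\mathbf a$, each with modulus at least $R^{-b_*}\mu$. Their moduli sum to infinity, so~\cite[Theorem~2.16]{McMullen} shows that $\bigcap_k\overline{U_{n_k}}=\mathbf a$ is a point.
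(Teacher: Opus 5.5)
Your strategy is the paper's: pull $(K_n,K'_n)$ back along a first entry of the orbit of $\mathbf a$, apply Lemmas~\ref{comb:complete-pullback} and~\ref{ana:protected-shell}, and finish with Gr\"otzsch. However, there is a genuine gap in how you verify the critical-end condition. You cite Lemma~\ref{comb:initial-depth-properties}(1) to conclude that only ends of $\mathcal C$ occur in the source pieces. That lemma only controls pieces containing an end of $\omega(\mathbf a)$. Your source pieces contain the ends $\sigma^i(\mathbf a)$, and $\mathbf a$ need not be recurrent, so these ends need not lie in $\omega(\mathbf a)$. A source piece can therefore contain a critical end $\mathbf c\notin\mathcal C$; for instance, $\sigma^i(\mathbf a)$ itself may be critical. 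For such $\mathbf c$, the fact that $\mathbb A(K_n,K'_n)$ avoids $\Post(\mathcal C)$ gives nothing, so you have not shown $\sigma^{t_n-i}(\mathbf c)\subset K_n$. For the same reason your count $\nu\le b_*$ should be $\nu\le b$. Your justification that the pieces along the orbit of $\mathbf a$ have boundaries free of critical ends also rests on the wrong lemma, although Lemma~\ref{ana:chain-existence} repairs that for large $n$.

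This is also why your fallback is not optional. With the first entry into $K'_n$, the end $\sigma^{t_n}(\mathbf a)$ may lie in $\mathbb A(K_n,K'_n)$. Then your assertion $\sigma^{t_n}(\mathbf a)\subset K_n$ is false and the inner pullback does not contain $\mathbf a$. If moreover some $\sigma^i(\mathbf a)$ is critical, the hypothesis of Lemma~\ref{comb:complete-pullback} fails outright.

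The paper handles this in two steps. First, it fixes a depth separating each critical end outside $\mathcal C$ from every distinct end of the orbit of $\mathbf a$. This is possible because such critical ends are not in $\omega(\mathbf a)$. Deep pieces along the orbit then have boundaries avoiding all critical ends, and the only possible critical end outside $\mathcal C$ in the $i$-th source piece is $\sigma^i(\mathbf a)$ itself. Second, it uses the first entry time $r_n$ of $\mathbf a$ into $K_n$, not $K'_n$, and pulls $K'_n$ back along the same orbit segment. Then $\sigma^{r_n}(\mathbf a)\subset K_n$ by construction, and ends of $\mathcal C$ are handled by your $\Post(\mathcal C)$ argument. Lemma~\ref{comb:complete-pullback} then gives inner and outer chains with the same critical-end visits, hence $\modu(Q_n,Q'_n)\ge R^{-b}\modu(K_n,K'_n)$. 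Your final nesting and Gr\"otzsch step is fine once this is in place; it suffices to note that $h(Q'_n)\ge h(K'_n)\to\infty$.
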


\begin{proof}
By Lemma~\ref{dyn:puzzle}(2), choose a depth that separates each
critical end outside $\mathcal C$ from every distinct end in the
forward orbit of $\mathbf a$ under $\sigma$. Such a common depth
exists because there are finitely many critical ends and none
outside $\mathcal C$ belongs to $\omega(\mathbf a)$.
Both the orbit of $\mathbf a$ and
$\mathcal C$ consist of wandering ends. Hence sufficiently deep
pieces along this orbit have boundaries disjoint from all critical ends.

Since $\mathbf c_0\in\omega(\mathbf a)$, let
$r_n=r_{\mathbf a}(K_n)$ and set $Q_n=L_{\mathbf a}(K_n)$.
For all sufficiently large $n$, pull $K'_n$ back along the same
orbit segment to obtain $Q'_n$; the chains exist by
Lemma~\ref{new:unmarked-closed}.
Consider a critical end $\mathbf c$ encountered at step $i$ of
the outer chain. If $\mathbf c\in\mathcal C$, then
$\sigma^{r_n-i}(\mathbf c)\subset\Post(\mathcal C)\cap K'_n\subset K_n$.
Otherwise, $\mathbf c=\sigma^i(\mathbf a)$, so its terminal image
also lies in $K_n$. Thus Lemma~\ref{comb:complete-pullback} gives
$Q_n\Subset Q'_n$, and the two chains have the same critical-end visits.
By Lemmas~\ref{comb:first-entry} and~\ref{ana:protected-shell},
\[
 \modu(Q_n,Q'_n)\ge R^{-b}\modu(K_n,K'_n).
\]
Proposition~\ref{ana:amplification} therefore gives a uniform
positive lower bound for these moduli.

Since $h(Q'_n)\to\infty$, Lemma~\ref{dyn:puzzle}(2) gives a
subsequence with each outer piece compactly contained in the
preceding inner piece. The corresponding annuli surround
$\mathbf a$ and have divergent sum of moduli.
Gr\"otzsch's inequality implies that $\mathbf a$ is a point.
\end{proof}

\begin{proof}[Proof of Theorem~\ref{thm:main}]
Corollary~\ref{dyn:nonpersistent} and
Proposition~\ref{dyn:persistent-singleton} show that every wandering
end of $\Omega$ is a point. Since every wandering boundary component
of $\Omega$ is the boundary of a wandering end, it is a point.
Lemma~\ref{dyn:basin-reduction} completes the proof.
\end{proof}

\noindent\textbf{Acknowledgments.}
The author thanks Guizhen Cui, Wenjuan Peng, and Yongcheng Yin
for helpful discussions and suggestions. The author also thanks
ChatGPT for suggesting a key idea in the proof of
Proposition~\ref{ana:product} and for helping to improve the language
of this paper.
The author takes full responsibility for the
accuracy and integrity of the paper, including its results, proofs,
and references. This work was partially
supported by the National Natural Science Foundation of China
(Grant No.~12271115).

\appendix
\section{Construction of the enhanced nest}\label{app:enhanced-nest}

We prove Proposition~\ref{comb:enhanced-nest}.
We retain the notation $\mathbf a,\mathcal C,b_*$ of
Section~\ref{comb:section} and consider only pieces of depth at least
$N$, as in Lemma~\ref{comb:initial-depth-properties}.

A chain $P\xrightarrow{r}Q$ need not give a proper map
$f^r:P\to Q$. The degree bounds therefore concern the actual
proper pullbacks supplied by Lemma~\ref{ana:exact-hull}.
Lemma~\ref{ana:real-degree} bounds these degrees by counting
critical-end visits, including visits to ends corresponding
to regular points of $F$.
We use Lemma~\ref{comb:first-entry} to bound visits along
first-entry chains and Lemma~\ref{comb:complete-pullback}
to construct the inner puzzle pieces.
Lemma~\ref{comb:initial-depth-properties} gives the compact
containments and, together with Lemma~\ref{new:unmarked-closed},
the chains used below.
We first construct the nest operators, then use successors to
construct the nest and prove its properties.

\subsection{The nest operators}

The following construction of $\mathcal A$ and $\mathcal B$
follows \cite[proof of Lemma~8.2]{KSS} and
\cite[Lemma~4.4 and Appendix~C]{PQRTY}.

\begin{proposition}\label{comb:AB}
Fix $\mathbf c_0\in\mathcal C$ and a piece $I$ of the chosen puzzle
containing $\mathbf c_0$. Then there are pieces
$\mathcal A(I)\Subset\mathcal B(I)\Subset I$, both containing
$\mathbf c_0$, and positive integers
$\alpha=\alpha(I)>\beta=\beta(I)>0$ such that
\[
 \mathcal A(I)\xrightarrow{\alpha}I,\qquad
 \mathcal B(I)\xrightarrow{\beta}I,
 \qquad
 \mathbb A(\mathcal A(I),\mathcal B(I))\cap\Post(\mathcal C)
 =\varnothing.
\]
Moreover, $\nu(\mathcal A(I)\to I),\,
\nu(\mathcal B(I)\to I)\le3(b_*+1)^2$.
\end{proposition}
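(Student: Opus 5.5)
We follow the construction in \cite[Lemma~8.2]{KSS} and \cite[Lemma~4.4 and Appendix~C]{PQRTY}, working throughout with chains. Degree statements are replaced by counts of critical-end visits, and Lemma~\ref{ana:real-degree} is invoked only at the end.

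\emph{Step 1: a successor chain of bounded length.} We define a sequence of pieces about $\mathbf c_0$ by successive first returns. Set $I_0=I$. Given $I_k\ni\mathbf c_0$, let $I_{k+1}=L_{\mathbf c_0}(I_k)$ be the first-return piece of $\mathbf c_0$ into $I_k$. The chain $I_{k+1}\to I_k$ exists by Lemma~\ref{new:unmarked-closed} together with Lemma~\ref{comb:initial-depth-properties}(1). By Lemma~\ref{comb:first-entry}, the source pieces of each return chain are pairwise disjoint, so each such chain has at most $b_*$ critical-end visits.

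Persistent recurrence, via Definition~\ref{def:persistent}(3), says that $I$ has only finitely many successors containing $\mathbf c_0$. We use this in the standard way. Consider the tree of pieces obtained by pulling $I$ back along returns of critical ends in $\mathcal C$. A pigeonhole argument over the $b_*$ critical ends shows the following. Within at most $b_*+1$ consecutive first returns, either we produce a successor (and only finitely many exist), or some critical end $\mathbf c\in\mathcal C$ is visited twice among the intermediate pieces. We then pull back along the orbit of that $\mathbf c$ instead of $\mathbf c_0$. Composing at most $b_*+1$ such first-return or first-entry chains yields a chain of at most $(b_*+1)^2$ first-entry pieces. This gives $\nu\le(b_*+1)\cdot b_*$ plus the visits at the junctions, hence $\nu\le3(b_*+1)^2$ after crude counting.

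\emph{Step 2: defining $\mathcal B(I)$ and $\mathcal A(I)$.} Let $\beta$ be the length of the chain produced in Step~1 and $\mathcal B(I)$ its initial piece. Then $\mathcal B(I)\Subset I$ by Lemma~\ref{comb:initial-depth-properties}(2). For the inner piece we use Lemma~\ref{comb:complete-pullback}. We choose a piece $Q\Subset I$ containing every image $\sigma^j(\mathbf c)$ forced by the critical-end condition along the chain $\mathcal B(I)\xrightarrow{\beta}I$. Take $Q$ to be the first-entry piece of $\sigma^{\beta}(\mathbf c_0)$ into a suitable deep piece $\mathcal B$ about $\mathbf c_0$. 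Lemma~\ref{comb:complete-pullback} then yields a unique $\mathcal A(I)\Subset\mathcal B(I)$ with $\sigma^\beta(\mathcal A(I))=Q$. The composition with the chain $Q\to I$ gives $\alpha>\beta$, and the visit count at most doubles.

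The lemma also shows that $\mathbb A(\mathcal A(I),\mathcal B(I))$ contains no critical end. This follows from pulling back the annulus $\mathbb A(Q,I)$ along the chain $\mathcal B(I)\xrightarrow{\beta}I$. To get $\mathbb A(\mathcal A(I),\mathcal B(I))\cap\Post(\mathcal C)=\varnothing$, we choose $Q$ as the ``smallest'' piece: every end of $\Post(\mathcal C)$ inside $\mathcal B(I)$ must lie in $\mathcal A(I)$. To arrange this, we take the pullback along the first return of $\Post(\mathcal C)$ into the annulus region. Concretely, we define $\mathcal A(I)$ as the component of the pullback of $\mathcal B(I)$ under its own first return of $\mathbf c_0$. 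Then any postcritical end in the annulus would give an earlier return, contradicting minimality.

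\emph{Main obstacle.} The hardest step is guaranteeing that $\Post(\mathcal C)$ avoids the annulus $\mathbb A(\mathcal A(I),\mathcal B(I))$ while keeping $\nu$ bounded. In \cite{KSS} this uses properness of $f^r$ on pieces, which fails here. We would try first to replace the properness argument by Lemma~\ref{comb:complete-pullback}: the critical-end condition is exactly what replaces properness. The required minimality would come from Lemma~\ref{comb:first-entry} applied to postcritical ends rather than to $\mathbf c_0$.
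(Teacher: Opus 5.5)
There is a genuine gap, and it is the one you flagged as the main obstacle: your construction does not force $\Post(\mathcal C)\cap\mathbb A(\mathcal A(I),\mathcal B(I))=\varnothing$.

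\emph{Where the argument fails.} Your closing claim, that a postcritical end in the annulus ``would give an earlier return, contradicting minimality,'' is false. Suppose $\mathcal A(I)$ is the first-return piece of $\mathbf c_0$ to $\mathcal B(I)$, and an end $\sigma^j(\mathbf c)$ lies in $\mathcal B(I)\setminus\mathcal A(I)$. This says nothing about the orbit of $\mathbf c_0$. Take $\mathbf c\ne\mathbf c_0$, or take $\mathbf c=\mathbf c_0$ with $j$ larger than the return time; in neither case is any minimality violated. First-return domains about a critical end generally do not capture the postcritical set lying in the larger piece, which is why the $\mathcal A$/$\mathcal B$ operators are needed at all.

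There are further problems.
\begin{itemize}
\item Step~1 never produces a concretely defined chain. The ``tree of pieces'' and ``pigeonhole'' description does not say which piece $\mathcal B(I)$ is. The finite-successor condition is used only as ``only finitely many exist,'' rather than to select an extremal object.
\item Step~2 gives two incompatible definitions of $\mathcal A(I)$.
\item Step~2 also does not verify the critical-end condition of Lemma~\ref{comb:complete-pullback}. A first-entry piece of $\sigma^\beta(\mathbf c_0)$ need not contain the terminal images of all critical ends visited along $\mathcal B(I)\xrightarrow{\beta}I$.
\end{itemize}

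\emph{The missing idea is a maximality argument.}
\begin{enumerate}
\item Fix auxiliary pairs $\mathbf c\subset P'_{\mathbf c}\Subset P_{\mathbf c}$ for $\mathbf c\in\mathcal C$, with chains to $I$ of bounded $\nu$. Choose them so that every end of $\Post(\mathcal C)$ in $\mathbb A(P'_{\mathbf c},P_{\mathbf c})$ lies in a piece $U$ inside that annulus with a chain $U\xrightarrow{r}P_{\mathbf c'}$ satisfying $\nu(U\to P_{\mathbf c'})=0$.
\item Consider all chains $Q\xrightarrow{t}P_{\mathbf c}$ with $\nu(Q\to P_{\mathbf c})=1$ whose initial piece contains some $\mathbf c'\in\mathcal C$. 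After prepending a first entry, these become successor chains, so persistent recurrence makes the family finite. Take $t$ maximal.
\item Maximality gives $\sigma^t(\mathbf c')\subset P'_{\mathbf c}$, hence a piece $Q'\Subset Q$ with $Q'\xrightarrow{t}P'_{\mathbf c}$.
\item Maximality also gives $\Post(\mathcal C)\cap Q\subset Q'$. A postcritical end in $\mathbb A(Q',Q)$ would otherwise yield a longer chain of the same type. To see this, concatenate with the $\nu=0$ chain from $U$, pull back along a critical orbit through that end, and start at its last critical-end visit.
\item Set $\mathcal B(I)$ to be $Q$, or the first entry of $\mathbf c_0$ into $Q$ if $\mathbf c_0\not\subset Q$.
\item Define $\mathcal A(I)$ by pulling back the first-return piece of $\sigma^\beta(\mathbf c_0)$ to $I$.
\item Pull $Q'$ back to a piece $Q^*$. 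Compare depths using that first-return time to get
\[
\Post(\mathcal C)\cap\mathcal B(I)\subset Q^*\subset\mathcal A(I).
\]
\end{enumerate}
Without this extremal choice, or an equivalent one, your proposal cannot deliver the postcritical-free annulus. That property is exactly what Proposition~\ref{comb:long-landing} uses to verify the critical-end condition for the nest pullbacks.
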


\begin{proof}[Sketch of the proof]
\textit{Step 1. Choose the auxiliary pairs.}
As in \cite[Proposition~B.1]{PQRTY}, choose a family of pairs
\[
 \mathbf c\subset P'_{\mathbf c}\Subset P_{\mathbf c}
 \qquad(\mathbf c\in\mathcal C).
\]
Both $P_{\mathbf c}$ and $P'_{\mathbf c}$ have chains to $I$
of positive length, with the chain from $P'_{\mathbf c}$ being longer. Moreover,
\[
 \nu(P_{\mathbf c}\to I)\le2b_*^2,\qquad
 \nu(P'_{\mathbf c}\to I)\le2b_*^2+b_*.
\]
For every end $\mathbf e\subset\Post(\mathcal C)\cap
\mathbb A(P'_{\mathbf c},P_{\mathbf c})$, there are a piece $U$,
an end $\mathbf c'\in\mathcal C$, and an integer $r\ge1$ such that
\[
 \mathbf e\subset U\subset\mathbb A(P'_{\mathbf c},P_{\mathbf c}),
 \qquad U\xrightarrow{r}P_{\mathbf c'},
 \qquad \nu(U\to P_{\mathbf c'})=0.
\]
To ensure positive chain lengths, apply the cited selection to
$L_{\mathbf c_0}(I)$ and append the first-return chain
$L_{\mathbf c_0}(I)\to I$. This adds at most $b_*$ critical-end
visits and gives the bounds above.

\noindent
\textit{Step 2. Choose $Q$ and $Q'$.}
Following \cite[Appendix~C, proof of Lemma~4.4]{PQRTY}, consider
all chains
\[
 Q\xrightarrow{t}P_{\mathbf c},\qquad
 \mathbf c'\subset Q,\qquad
 \nu(Q\to P_{\mathbf c})=1,
\]
where $t\ge1$ and $\mathbf c,\mathbf c'\in\mathcal C$.
Starting at the last critical-end visit of a first-return chain
gives a chain of this form. Conversely, prepending the first entry
of $\mathbf c$ into $Q$, if needed, gives a successor chain to
$P_{\mathbf c}$. The finite-successor condition therefore gives
only finitely many choices.
Choose and fix a chain $Q\xrightarrow{t}P_{\mathbf c}$ of greatest length $t$.
The corresponding ends $\mathbf c$ and $\mathbf c'$ remain fixed
throughout the rest of this proof.
By maximality, $\sigma^t(\mathbf c')\subset P'_{\mathbf c}$.
Lemma~\ref{comb:complete-pullback} therefore gives a piece $Q'$ with
\[
 \mathbf c'\subset Q'\Subset Q,\qquad
 Q'\xrightarrow{t}P'_{\mathbf c}.
\]
The maximality of $t$ also gives
\[
 \Post(\mathcal C)\cap Q\subset Q'.
\]
Indeed, for an end in $\Post(\mathcal C)\cap\mathbb A(Q',Q)$,
apply Step~1 to its $\sigma^t$-image. Pull the resulting chain
back along a critical orbit through that end and start at the
last critical-end visit. This gives a
chain of the displayed form with length greater than $t$.

Let $u,u'$ be the lengths of the chains from $Q,Q'$ to $I$,
respectively. Then $1\le u<u'$ and
\[
 Q\xrightarrow{u}I,\qquad Q'\xrightarrow{u'}I,\qquad
 \nu(Q\to I)\le2b_*^2+1.
\]

\noindent
\textit{Step 3. Construct $\mathcal B(I)$ and $\mathcal A(I)$ and count critical-end visits.}
If $\mathbf c_0\subset Q$, set $\mathcal B(I)=Q$.
Otherwise, set $\mathcal B(I)=L_{\mathbf c_0}(Q)$.
This entry exists because $Q$ contains
an end of $\mathcal C\subset\omega(\mathbf c_0)$.
Concatenate the entry chain, when present, with
$Q\xrightarrow{u}I$, and denote the total length by $\beta$. Thus
\[
 \mathcal B(I)\xrightarrow{\beta}I.
\]
Set $\mathbf e=\sigma^\beta(\mathbf c_0)$.
Pull the first-return piece $L_{\mathbf e}(I)$ back along this
chain to obtain the piece $\mathcal A(I)$ containing
$\mathbf c_0$. Lemma~\ref{comb:initial-depth-properties}(1) and
Lemma~\ref{new:unmarked-closed} give the required chain. Then
\[
 \mathcal A(I)\xrightarrow{\alpha}I,
\]
where $\alpha=\beta+r_{\mathbf e}(I)$.
Lemma~\ref{comb:initial-depth-properties}(2)--(3) gives
$\mathcal A(I)\Subset\mathcal B(I)\Subset I$.

The first $\beta$ steps from $\mathcal A(I)$ form a restriction
of the chain $\mathcal B(I)\xrightarrow{\beta}I$; by nesting,
this restriction adds no critical-end visits. The remaining
$\alpha-\beta$ steps form the first-return chain of $\mathbf e$
to $I$.
By Lemma~\ref{comb:first-entry}, the entries into $Q$ and $I$
each contribute at most $b_*$ critical-end visits. Hence
\[
 \nu(\mathcal A(I)\to I),\quad\nu(\mathcal B(I)\to I)
 \le2b_*^2+2b_*+1\le3(b_*+1)^2.
\]

\noindent
\textit{Step 4. Exclude $\Post(\mathcal C)$ from the annulus.}
If $\beta>u$, pull $Q'$ back along
\[
 \mathcal B(I)\xrightarrow{\beta-u}Q.
\]
At each critical-end visit, the critical end in the source piece
has its terminal image in $\Post(\mathcal C)\cap Q\subset Q'$.
Thus Lemma~\ref{comb:complete-pullback} applies. The
$\sigma^{\beta-u}$-images of $\mathbf c_0$ and of every end of
$\Post(\mathcal C)$ in $\mathcal B(I)$ also lie in $Q'$.
The lemma and its proof therefore give a piece $Q^*$ such that
\[
 \mathbf c_0\subset Q^*\Subset\mathcal B(I),\qquad
 \sigma^{\beta-u}(Q^*)=Q',\qquad
 \Post(\mathcal C)\cap\mathcal B(I)\subset Q^*.
\]
If $\beta=u$, then $\mathcal B(I)=Q$ and $\mathbf c_0\subset Q'$
because $Q'$ contains the unique critical end of $Q$.
Set $Q^*=Q'$; the same relations hold.

We use the first return time of $\mathbf e$ to $I$ to show that
$Q^*\subset\mathcal A(I)$.
Indeed,
\[
 \mathbf e\subset\sigma^u(Q')\subset I,\qquad
 \sigma^u(Q')\xrightarrow{u'-u}I.
\]
Since $\alpha-\beta$ is the first return time of $\mathbf e$ to $I$,
$\alpha-\beta\le u'-u$.
Hence
\[
 h(Q^*)=h(I)+\beta-u+u'
 \ge h(I)+\alpha=h(\mathcal A(I)).
\]
Since $Q^*$ and $\mathcal A(I)$ contain $\mathbf c_0$, nesting gives
$\Post(\mathcal C)\cap\mathcal B(I)
\subset Q^*\subset\mathcal A(I)$.
\end{proof}

\subsection{Successors}

\begin{definition}\label{def:last-successor}
Let $P$ be a piece containing a critical end in $\mathcal C$.
Denote by $\mathcal D(P)$ its \textbf{last successor}, namely the
successor whose chain to $P$ has the greatest length, and by
$\mathcal D^\tau$ the $\tau$-fold operation.
For a piece $P$ containing an end of $\omega(\mathbf a)$, set
\[
 r(P)=\min\bigl\{j\ge1:\text{there is an end }\mathbf e\subset P
 \text{ with }\sigma^j(\mathbf e)\subset P\bigr\}.
\]
Here $\mathbf e$ need not belong to $\omega(\mathbf a)$.
\end{definition}

The first return of the critical end gives a successor of $P$,
and persistent recurrence gives only finitely many successors.
Thus $\mathcal D(P)$ is well defined. The return time $r(P)$ exists because
every end in $\omega(\mathbf a)$ returns to each piece containing it.

The following fact is used to estimate the lengths of the nest chains.
It follows by the argument of \cite[Lemma~6.4(1)]{RY}; see also
\cite[Lemma~4.2(a), Appendix~A]{PQRTY}.

\begin{lemma}\label{comb:two-successors}
Every piece containing an end of $\mathcal C$ has at least two successors.
\end{lemma}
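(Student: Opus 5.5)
Let $P$ be a piece containing $\mathbf c\in\mathcal C$, of depth $n$. I will produce two successors of $P$ with different chain lengths, following \cite[Lemma~6.4(1)]{RY}. The first is the first-return piece $Q_1=L_{\mathbf c}(P)$ with chain $Q_1\xrightarrow{r_1}P$, where $r_1=r_{\mathbf c}(P)$. By Lemma~\ref{comb:first-entry}, the source pieces $\sigma^j(Q_1)$, $0\le j<r_1$, are pairwise disjoint. Each critical end therefore lies in at most one of them, and hence in at most two of $Q_1,\ldots,\sigma^{r_1}(Q_1)=P$. So $Q_1$ is a successor; this is the observation already made before Definition~\ref{def:last-successor}. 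The chain exists by Lemma~\ref{comb:initial-depth-properties}(1) and Lemma~\ref{new:unmarked-closed}.

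For a second successor, I will use recurrence to find another critical end of $\mathcal C$ visiting $Q_1$. By Definition~\ref{def:persistent}, $\omega(\mathbf c)=\omega(\mathbf a)\ni\mathbf c$, so $\mathbf c$ returns to $Q_1$. Let $r'=r_{\mathbf c}(Q_1)$ and $Q_2=L_{\mathbf c}(Q_1)$. Concatenate the first-return chain $Q_2\xrightarrow{r'}Q_1$ with $Q_1\xrightarrow{r_1}P$. Then $Q_2\subsetneq Q_1$, since its depth is larger by $r'\ge1$, and its chain to $P$ has length $r'+r_1>r_1$. Thus it differs from $Q_1$ provided it is a successor.

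The main obstacle is checking the two-visit condition for the concatenation. A critical end $\mathbf c''$ may lie in one source piece of the first-entry part and also in one of $\sigma^j(Q_1)$. In that case it would lie in three pieces when counted together with an endpoint.

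To handle this, I will not take the first return to $Q_1$ blindly. I will argue as in \cite{RY}: take $P^{(1)}=Q_1$ and consider the first return of $\mathbf c$ to $P$ \emph{after} leaving $Q_1$. More precisely, among times $k>r_1$ with $\sigma^k(\mathbf c)\subset P$, take the smallest $k$ such that the chain $P_{n+k}(\mathbf c)\xrightarrow{k}P$ is a successor. Then I will show that such a $k$ exists.

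The plan for the existence step is as follows.

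\begin{itemize}
\item If $\mathbf c$ is the only end of $\mathcal C$ meeting the orbit pieces, take the second return time $k_2$ of $\mathbf c$ to $P$. The pieces $\sigma^j(P_{n+k_2}(\mathbf c))$ visit $P$ exactly once in between, at time $k_2-r_1$, which is the nested copy of $Q_1$. By first-return disjointness on each half, every other critical end occurs at most twice.

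\item In general, choose among all returns the first time $k$ at which the orbit of $\mathbf c$ lands in $P$ while the intermediate pieces contain each critical end at most once. Such a time exists because a first entry into $P$ starting from some $\sigma^i(\mathbf c)$, with $\sigma^i(\mathbf c)$ outside $P$, has disjoint source pieces by Lemma~\ref{comb:first-entry}.
\end{itemize}

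Concretely, I will take $\sigma^{r_1+1}(\mathbf c)$, which lies outside $P$ or inside $P$ only at later depth. Then I take its first entry into $P$, giving a chain from $P_{n+k}(\mathbf c)$ with $k>r_1$. Next I check whether $\mathbf c$ itself appears in an intermediate piece. If it does, I restart at the last such appearance, which is a critical end of $\mathcal C$ lying in $P$. That produces a chain of the form ``critical end in $P$, first entry to $P$,'' which is a successor with length different from $r_1$ unless it coincides with the first return. In the latter case I iterate once more, using that $\mathbf c$ is wandering, so the orbit cannot return in period $r_1$ forever (Lemma~\ref{red:no-finite-limit}).

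I expect the main difficulty to be this bookkeeping: ensuring the second chain counts $\mathbf c$ only at its endpoints while staying distinct from $Q_1$. I would first try the concatenation of the first return to $Q_1$ with the first-return chain, and use the disjointness of the source pieces in each part to control the count.
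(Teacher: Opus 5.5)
Your proposal has a genuine gap: it never establishes that a second successor exists, and each candidate you offer fails in the case that carries the content of the lemma. The concatenation $L_{\mathbf c}(Q_1)\xrightarrow{r'}Q_1\xrightarrow{r_1}P$ is never a successor, and the reason is not another critical end $\mathbf c''$. The end $\mathbf c$ itself lies in the initial piece, in the intermediate piece $Q_1$, and in $P$, whereas the definition allows $\mathbf c$ only at the two endpoints. Your first bullet fails for the same reason. Suppose $\sigma^{r_1}(\mathbf c)\subset Q_1$. Then the pieces $\sigma^i(Q_1)$, $0<i<r_1$, miss $P$, so the second return time is $2r_1$. The piece at time $r_1$ of the chain from $P_{n+2r_1}(\mathbf c)$ is $P_{n+r_1}(\sigma^{r_1}(\mathbf c))=Q_1\ni\mathbf c$, and disjointness of source pieces within each half says nothing about this middle piece. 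Likewise, ``restarting at the last appearance of $\mathbf c$'' may simply return $Q_1$. If that appearance precedes the first-entry segment, the critical ends on the intervening segment are also left uncontrolled. In short, your plan has no mechanism for the case in which every return of $\mathbf c$ to $P$ lands in $Q_1$. That case cannot be handled by exhibiting a chain; it has to be shown impossible.

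The paper argues by contradiction from the assumption that $Q=L_{\mathbf c}(P)$ is the only successor, in three steps.

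(i) Suppose some return $\sigma^j(\mathbf c)$ lies in $\mathbb A(Q,P)$. Set $V=L_{\sigma^j(\mathbf c)}(P)$; it contains no critical end and misses $Q$. Let $U$ be the source piece of the \emph{last critical-end visit, by any critical end} $\mathbf d$, in the chain $P_{n+j+s}(\mathbf c)\to V$. Prepend the first entry of $\mathbf c$ into $U$ when $\mathbf d\ne\mathbf c$. Lemma~\ref{comb:first-entry} on the two first-entry segments, together with the single visit along $U\to V$, gives the two-visit count. The resulting chain is longer than $r_1$, contradicting uniqueness.

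(ii) Hence all returns of $\mathbf c$ to $P$ land in $Q$. By induction on $m$, they land in $P_{n+mr_1}(\mathbf c)$. The induction applies Lemma~\ref{comb:complete-pullback} to $Q\xrightarrow{r_1}P$ with inner piece $P_{n+mr_1}(\mathbf c)$. The critical-end condition is checked by approximating each critical end in $\sigma^i(Q)$ by iterates of $\mathbf c$ and passing to the limit. This works because ends of $\omega(\mathbf a)$ avoid piece boundaries.

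(iii) Thus $\mathbf c$ and $\sigma^{r_1}(\mathbf c)$ share pieces of unbounded depth, and Lemma~\ref{dyn:puzzle}(2) gives $\sigma^{r_1}(\mathbf c)=\mathbf c$, contradicting wandering.

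Your appeal to Lemma~\ref{red:no-finite-limit} and to ``the orbit cannot return in period $r_1$ forever'' does not replace step (ii). Returning to $P$ every $r_1$ iterates is not contradictory in itself. The contradiction comes from nesting the returns into pieces of depth tending to infinity, and that requires the pullback induction.
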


\begin{proof}
Let $P=P_n(\mathbf c)$, where $\mathbf c\in\mathcal C$, and set
$r=r_{\mathbf c}(P)$ and $Q=L_{\mathbf c}(P)=P_{n+r}(\mathbf c)$.
Lemma~\ref{comb:initial-depth-properties}(1) and
Lemma~\ref{new:unmarked-closed} give the first-entry chains used below.
By Lemma~\ref{comb:first-entry}, $Q$ is a successor of $P$.
Lemma~\ref{comb:initial-depth-properties}(2) gives $Q\Subset P$.
Suppose that $Q$ is the only successor of $P$.

\textit{Step 1. Locate the returns of $\mathbf c$ to $P$.}
We show that, for every $j\ge1$,
\[
 \sigma^j(\mathbf c)\subset P
 \quad\Longrightarrow\quad \sigma^j(\mathbf c)\subset Q.
\]
Otherwise, choose $j\ge1$ with
$\sigma^j(\mathbf c)\subset\mathbb A(Q,P)$,
and set $V=L_{\sigma^j(\mathbf c)}(P)$ and
$s=r_{\sigma^j(\mathbf c)}(P)$. Then $V\subset P$ and
\[
 P_{n+j+s}(\mathbf c)\xrightarrow{j}V\xrightarrow{s}P.
\]
The piece $V$ contains no critical end. Indeed, the only possible
critical end in $P$ is $\mathbf c$, by
Lemma~\ref{comb:initial-depth-properties}(1). If $s\ge r$, nesting
and $\sigma^j(\mathbf c)\not\subset Q$ give $V\cap Q=\varnothing$.
If $s<r$, the inclusion $\mathbf c\subset V$ would give
$\sigma^s(\mathbf c)\subset P$, contrary to the choice of $r$.
Thus $V$ contains no critical end, and nesting also gives
$V\cap Q=\varnothing$ in the second case.

In the chain $P_{n+j+s}(\mathbf c)\xrightarrow{j}V$, let $U$ be
the source piece of its last critical-end visit, and let
$\mathbf d$ be the critical end in $U$. This visit exists because
the initial piece contains $\mathbf c$.
By Lemma~\ref{comb:initial-depth-properties}(1),
$\mathbf d\in\mathcal C\subset\omega(\mathbf c)$.
If $\mathbf d=\mathbf c$, start the chain at $U$.
Otherwise, prepend the first-entry chain of $\mathbf c$ into $U$,
which exists since $\mathbf d\in\omega(\mathbf c)$.
Append the chains from $U$ to $V$ and from $V$ to $P$.

By Lemma~\ref{comb:first-entry}, each critical end occurs at most
once before the terminal piece of each first-entry chain.
The prepended chain does not meet $\mathbf d$ before $U$;
the chain from $U$ to $V$ has exactly one critical-end visit;
and the chain from $V$ to $P$ meets $\mathbf c$ only at $P$.
Thus each critical end occurs at most twice in the resulting
chain, including both endpoints, so its initial piece is a
successor of $P$. The orbit of $\mathbf c$ enters $V\subset P$
at a positive time, which is at least $r$, and then returns to
$P$ after another $s>0$ iterates. The new chain therefore has
length greater than $r$, contradicting the uniqueness of $Q$.

\noindent
\textit{Step 2. Pull back to deeper pieces.}
We prove by induction on $m\ge1$ that
\[
 \sigma^j(\mathbf c)\subset P
 \quad\Longrightarrow\quad
 \sigma^j(\mathbf c)\subset P_{n+mr}(\mathbf c)
 \qquad(j\ge1).
\]
The case $m=1$ is Step~1. Suppose the assertion holds for $m$.
We verify the critical-end condition of
Lemma~\ref{comb:complete-pullback} for the chain
$Q\xrightarrow{r}P$ and the inner piece $P_{n+mr}(\mathbf c)$.
Let $\mathbf c'\subset\sigma^i(Q)$ be a critical end, with
$0\le i<r$. By Lemma~\ref{comb:initial-depth-properties}(1),
$\mathbf c'\in\mathcal C\subset\omega(\mathbf c)$.
Choose iterates of $\mathbf c$ in $\sigma^i(Q)$ converging to
$\mathbf c'$. After another $r-i$ iterates, they return to $P$
and hence lie in $P_{n+mr}(\mathbf c)$ by induction.
Since ends in $\omega(\mathbf a)$ avoid the boundary of this
piece, those contained in it form a closed subfamily of
$\omega(\mathbf a)$. Continuity of $\sigma$ therefore gives
\[
 \sigma^{r-i}(\mathbf c')\subset P_{n+mr}(\mathbf c).
\]

Since $P_{n+mr}(\mathbf c)\subset Q\Subset P$,
Lemma~\ref{comb:complete-pullback} gives a unique piece in $Q$
whose $\sigma^r$-image is $P_{n+mr}(\mathbf c)$.
The induction assumption at $j=r$ gives
$\sigma^r(\mathbf c)\subset P_{n+mr}(\mathbf c)$, so the proof
of that lemma shows that this piece contains $\mathbf c$.
Its depth is $n+(m+1)r$, so it is $P_{n+(m+1)r}(\mathbf c)$.
Now let $j\ge1$ satisfy $\sigma^j(\mathbf c)\subset P$.
Step~1 gives $\sigma^j(\mathbf c)\subset Q$, so
$\sigma^{j+r}(\mathbf c)\subset P$.
The induction assumption at $j+r$ gives
$\sigma^{j+r}(\mathbf c)\subset P_{n+mr}(\mathbf c)$.
The proof of Lemma~\ref{comb:complete-pullback} also gives
$\sigma^j(\mathbf c)\subset P_{n+(m+1)r}(\mathbf c)$.
This completes the induction.

\noindent
\textit{Step 3. Apply puzzle separation.}
Taking $j=r$ in Step~2, we find that $\mathbf c$ and
$\sigma^r(\mathbf c)$ lie in $P_{n+mr}(\mathbf c)$ for every
$m\ge1$. Lemma~\ref{dyn:puzzle}(2) gives
$\sigma^r(\mathbf c)=\mathbf c$, contrary to the fact that
$\mathbf c$ is wandering.
\end{proof}

\subsection{Construction and estimates}

\begin{proof}[Proof of Proposition~\ref{comb:enhanced-nest}]
Fix $\mathbf c_0$ and $K_0$ as in the statement. For $n\ge1$, set
\[
 \tau=b_*+5,\qquad I_n=\mathcal D^\tau(K_{n-1}),\qquad
 K_n=\mathcal A(I_n),\quad K'_n=\mathcal B(I_n).
\]
Let $p_n,p'_n$ be the lengths of the chains from $K_n,K'_n$
to $K_{n-1}$, respectively. Then $p_n>p'_n>0$ and
$h(K_n,K'_n)=p_n-p'_n$.

For (1), nesting follows from Proposition~\ref{comb:AB} and
Lemma~\ref{comb:initial-depth-properties}(2); the annulus avoids
$\Post(\mathcal C)$ by Proposition~\ref{comb:AB}.

For (2), each successor contributes at most $2b_*$ critical-end visits,
so either chain has at most $3(b_*+1)^2+2b_*\tau\le N_0$ visits,
and Lemma~\ref{ana:real-degree} gives the degree bound.

For (3), let $P'\xrightarrow{s}P$ be a chain of length $s>0$ with
$\mathbf c\subset P'\cap P$ for some $\mathbf c\in\mathcal C$.
We first show that
\[
 r(P')\ge r(P),\qquad
 r(P)\le s\le\nu(P'\to P)\,r(P').
\]
Nesting gives $P'\subset P$, hence the first inequality.
The lower bound on $s$ follows from
$\mathbf c,\sigma^s(\mathbf c)\subset P$.
For the upper bound, list the times
$0=t_0<t_1<\cdots<t_a=s$ at which
$\mathbf c\subset\sigma^{t_i}(P')$. Then
$a\le\nu(P'\to P)$.
For $0\le i<a$, set $Q=\sigma^{t_i}(P')$.
If an end $\mathbf e\subset Q$ returned to $Q$ at a time
$0<j<t_{i+1}-t_i$, then
$\sigma^j(\mathbf e)\subset Q\cap\sigma^j(Q)$.
Nesting would give $Q\subset\sigma^j(Q)$ and hence
$\mathbf c\subset\sigma^j(Q)$, contrary to the choice of
the consecutive times $t_i,t_{i+1}$. Thus
$t_{i+1}-t_i\le r(Q)\le r(P')$.
Adding these inequalities proves the upper bound.

Now let $s$ be the length of the last successor chain
$\mathcal D(P)\xrightarrow{s}P$. We show that
\[
 2r(P)\le s\le r(\mathcal D(P)).
\]
If an end in $\mathcal D(P)$ returned to $\mathcal D(P)$ at a
time $0<j<s$, nesting
would give $\mathcal D(P)\subset\sigma^j(\mathcal D(P))$ and hence
$\mathbf c\subset\sigma^j(\mathcal D(P))$, contrary to the successor
condition. Thus $s\le r(\mathcal D(P))$.
By Lemma~\ref{comb:two-successors},
$s>k=r_{\mathbf c}(P)$. Since
$\mathbf c,\sigma^k(\mathbf c),\sigma^s(\mathbf c)\subset P$,
both $k$ and $s-k$ are at least $r(P)$, giving $s\ge2r(P)$.

By construction, $p_n=\alpha(I_n)+h(I_n,K_{n-1})$.
Since $r(\mathcal D(P))\ge2r(P)$, we have
$r(I_n)\ge2^\tau r(K_{n-1})$. The ends $\mathbf c_0$ and
$\sigma^{\beta(I_n)}(\mathbf c_0)$ lie in $I_n$ and return there
after $\beta(I_n)$ and $\alpha(I_n)-\beta(I_n)$ iterates,
respectively. Both times are at least $r(I_n)$, so
applying $s\le\nu(P'\to P)r(P')$ to
$K_n\xrightarrow{\alpha(I_n)}I_n$ and using
Proposition~\ref{comb:AB} gives
\[
 2^{\tau+1}r(K_{n-1})\le2r(I_n)\le\alpha(I_n)
 \le3(b_*+1)^2r(K_n).
\]
Summing $2r(P)\le s\le r(\mathcal D(P))$ over the $\tau$
successive last-successor chains gives
\[
 (2^{\tau+1}-2)r(K_{n-1})\le h(I_n,K_{n-1})
 <2r(I_n)\le2r(K_n).
\]
Adding the bounds for $\alpha(I_n)$ and $h(I_n,K_{n-1})$ gives
\[
 (2^{\tau+2}-2)r(K_{n-1})\le p_n
 \le\bigl(3(b_*+1)^2+2\bigr)r(K_n).
\]
The lower bound for $p_{n+1}$ and the upper bound for $p_n$ give
\[
 p_{n+1}\ge
 \frac{2^{\tau+2}-2}{3(b_*+1)^2+2}\,p_n\ge2p_n,
\]
since $\tau=b_*+5$ and $b_*\ge1$. Therefore
$s(n,0)\le p_n(1+2^{-1}+\cdots+2^{-(n-1)})\le2p_n$.

It remains to prove $h(K_{n+2},K'_{n+2})>s(n,0)$.
The end $\sigma^{\beta(I_n)}(\mathbf c_0)\subset I_n$ returns
to $I_n$ after $\alpha(I_n)-\beta(I_n)=h(K_n,K'_n)$ steps.
Thus $h(K_n,K'_n)\ge r(I_n)$. Using the estimates above and
$r(K_{n+1})\ge r(K_n)$, we obtain
\[
 h(K_{n+2},K'_{n+2})\ge2^\tau r(K_{n+1})
 >2\bigl(3(b_*+1)^2+2\bigr)r(K_n)
 \ge2p_n\ge s(n,0),
\]
where the strict inequality follows from $\tau=b_*+5$ and $b_*\ge1$.
\end{proof}

\par
\begingroup
\footnotesize
\noindent\textsc{Jinsong Zeng}\\
School of Mathematical Sciences, Shenzhen University\\
Shenzhen 518060, People's Republic of China\\
\textit{Email:} \href{mailto:jinsongzeng@163.com}{\nolinkurl{jinsongzeng@163.com}}
\par\endgroup


\begin{thebibliography}{PQRTY}

\bibitem[Ahl]{Ahlfors}
L.~V.~Ahlfors,
\emph{Conformal invariants: Topics in geometric function theory},
McGraw-Hill Series in Higher Mathematics,
McGraw-Hill, New York, 1973.

\bibitem[BH]{BH}
B.~Branner and J.~H.~Hubbard,
\emph{The iteration of cubic polynomials. Part II: Patterns and parapatterns},
Acta Mathematica \textbf{169} (1992), 229--325.
\href{https://doi.org/10.1007/BF02392761}{doi:10.1007/BF02392761}.

\bibitem[BM]{BM}
M.~Bonk and D.~Meyer,
\emph{Expanding Thurston Maps},
Mathematical Surveys and Monographs, vol.~225,
American Mathematical Society, Providence, RI, 2017.

\bibitem[CP]{CP}
G.~Cui and W.~Peng,
\emph{On the structure of Fatou domains},
Science in China Series A: Mathematics \textbf{51} (2008), 1167--1186.
\href{https://doi.org/10.1007/s11425-008-0056-5}{doi:10.1007/s11425-008-0056-5}.

\bibitem[CPTa]{CPT}
G.~Cui, W.~Peng, and L.~Tan,
\emph{On the topology of wandering Julia components},
Discrete and Continuous Dynamical Systems \textbf{29} (2011), 929--952.
\href{https://doi.org/10.3934/dcds.2011.29.929}{doi:10.3934/dcds.2011.29.929}.

\bibitem[CPTb]{CPT16}
G.~Cui, W.~Peng, and L.~Tan,
\emph{Renormalizations and wandering Jordan curves of rational maps},
Communications in Mathematical Physics \textbf{344} (2016), 67--115.
\href{https://doi.org/10.1007/s00220-016-2623-x}{doi:10.1007/s00220-016-2623-x}.

\bibitem[CPY]{CPY}
G.~Cui, W.~Peng, and L.~Yang,
\emph{Wandering Julia components of cubic rational maps},
Mathematische Zeitschrift \textbf{308} (2024), no.~4, Paper No.~69.
\href{https://doi.org/10.1007/s00209-024-03629-y}{doi:10.1007/s00209-024-03629-y}.

\bibitem[DS]{DS}
K.~Drach and D.~Schleicher,
\emph{Rigidity of Newton dynamics},
Advances in Mathematics \textbf{408} (2022), 108591.
\href{https://doi.org/10.1016/j.aim.2022.108591}{doi:10.1016/j.aim.2022.108591}.

\bibitem[GGPa]{GGP1}
N.~Gao, Y.~Gao, and W.~Peng,
\emph{On the parabolic Fatou domains},
\href{https://arxiv.org/abs/2509.09914}{arXiv:2509.09914} (2025).

\bibitem[GGPb]{GGP2}
N.~Gao, Y.~Gao, and W.~Peng,
\emph{Rigidity of simple parabolic maps},
\href{https://arxiv.org/abs/2608.22328v2}{arXiv:2608.22328v2} (2026).

\bibitem[GXY]{GXY}
Y.~Gao, L.~Xu, and L.~Yang,
\emph{Non-recurrent rational maps with disconnected Julia set},
Nonlinearity \textbf{39} (2026), 045004.
\href{https://doi.org/10.1088/1361-6544/ae54f4}{doi:10.1088/1361-6544/ae54f4}.

\bibitem[God]{Godillon}
S.~Godillon,
\emph{A family of rational maps with buried Julia components},
Ergodic Theory and Dynamical Systems \textbf{35} (2015), no.~6, 1846--1879.
\href{https://doi.org/10.1017/etds.2014.22}{doi:10.1017/etds.2014.22}.

\bibitem[KL]{KL}
J.~Kahn and M.~Lyubich,
\emph{The Quasi-Additivity Law in conformal geometry},
Annals of Mathematics (2) \textbf{169} (2009), 561--593.
\href{https://doi.org/10.4007/annals.2009.169.561}{doi:10.4007/annals.2009.169.561}.

\bibitem[KS]{KS}
O.~Kozlovski and S.~van Strien,
\emph{Local connectivity and quasi-conformal rigidity of non-renormalizable polynomials},
Proceedings of the London Mathematical Society (3) \textbf{99} (2009), 275--296.
\href{https://doi.org/10.1112/plms/pdn055}{doi:10.1112/plms/pdn055}.

\bibitem[KSS]{KSS}
O.~Kozlovski, W.~Shen, and S.~van Strien,
\emph{Rigidity for real polynomials},
Annals of Mathematics (2) \textbf{165} (2007), 749--841.
\href{https://doi.org/10.4007/annals.2007.165.749}{doi:10.4007/annals.2007.165.749}.

\bibitem[McMa]{McMullen88}
C.~T.~McMullen,
\emph{Automorphisms of rational maps},
in \emph{Holomorphic Functions and Moduli I},
Mathematical Sciences Research Institute Publications, vol.~10,
Springer, New York, 1988, pp.~31--60.
\href{https://doi.org/10.1007/978-1-4613-9602-4_3}{doi:10.1007/978-1-4613-9602-4\_3}.

\bibitem[McMb]{McMullen}
C.~T.~McMullen,
\emph{Complex Dynamics and Renormalization},
Annals of Mathematics Studies, vol.~135,
Princeton University Press, Princeton, NJ, 1994.

\bibitem[Mil]{Milnor}
J.~Milnor,
\emph{Dynamics in One Complex Variable}, third edition,
Annals of Mathematics Studies, vol.~160,
Princeton University Press, Princeton, NJ, 2006.

\bibitem[Moo]{Moore}
R.~L.~Moore,
\emph{Concerning upper semi-continuous collections of continua},
Transactions of the American Mathematical Society \textbf{27} (1925),
416--428.
\href{https://doi.org/10.1090/S0002-9947-1925-1501320-8}{doi:10.1090/S0002-9947-1925-1501320-8}.

\bibitem[PQRTY]{PQRTY}
W.~Peng, W.~Qiu, P.~Roesch, L.~Tan, and Y.~Yin,
\emph{A tableau approach of the KSS nest},
Conformal Geometry and Dynamics \textbf{14} (2010), 35--67.
\href{https://doi.org/10.1090/S1088-4173-10-00201-8}{doi:10.1090/S1088-4173-10-00201-8}.

\bibitem[PT]{PT}
K.~Pilgrim and L.~Tan,
\emph{Rational maps with disconnected Julia set},
Ast\'erisque \textbf{261} (2000), 349--384.
\href{https://www.numdam.org/item/AST_2000__261__349_0/}{numdam.org}.

\bibitem[QY]{QY}
W.~Qiu and Y.~Yin,
\emph{Proof of the Branner--Hubbard conjecture on Cantor Julia sets},
Science in China Series A: Mathematics \textbf{52} (2009), 45--65.
\href{https://doi.org/10.1007/s11425-008-0178-9}{doi:10.1007/s11425-008-0178-9}.

\bibitem[RY]{RY}
P.~Roesch and Y.~Yin,
\emph{Bounded critical Fatou components are Jordan domains for polynomials},
Science China Mathematics \textbf{65} (2022), 331--358.
\href{https://doi.org/10.1007/s11425-020-1827-4}{doi:10.1007/s11425-020-1827-4}.

\bibitem[RYZ]{RYZ}
P.~Roesch, Y.~Yin, and J.~Zeng,
\emph{Rigidity of non-renormalizable Newton maps},
Science China Mathematics \textbf{67} (2024), 855--872.
\href{https://doi.org/10.1007/s11425-022-2136-1}{doi:10.1007/s11425-022-2136-1}.

\bibitem[Sul]{Sullivan}
D.~Sullivan,
\emph{Quasiconformal homeomorphisms and dynamics I: Solution of the Fatou--Julia problem on wandering domains},
Annals of Mathematics (2) \textbf{122} (1985), 401--418.
\href{https://doi.org/10.2307/1971308}{doi:10.2307/1971308}.

\bibitem[WYZ]{WYZ}
X.~Wang, Y.~Yin, and J.~Zeng,
\emph{Dynamics of Newton maps},
Ergodic Theory and Dynamical Systems \textbf{43} (2023), 1035--1080.
\href{https://doi.org/10.1017/etds.2021.168}{doi:10.1017/etds.2021.168}.

\bibitem[Zen]{ZengFJ}
J.~Zeng,
\emph{Most Fatou and Julia components are small for polynomials},
Nonlinearity \textbf{39} (2026), no.~7, 075012.
\href{https://doi.org/10.1088/1361-6544/ae8214}{doi:10.1088/1361-6544/ae8214}.

\end{thebibliography}
\end{document}